\newtheorem{theorem}{Theorem}[section]
\newtheorem{lemma}[theorem]{Lemma}
\newtheorem{question}[theorem]{Question}
\theoremstyle{definition}
\newtheorem{definition}[theorem]{Definition}
\def \dom{\operatorname{dom}}
\def \rng{\operatorname{rng}}
\newcommand{\ADSSTS}{requirement}
\newcommand{\SCAC}{$\mathbf{SCAC}$-requirement}
\newcommand{\SPROD}{$\mathbf{SProdWQO}$-requirement}
\begin{document}

\title{Constructing Sequences One Step at a Time}
\author{Henry Towsner}
\date{\today}
\thanks{Partially supported by NSF grant DMS-1600263.\\The work was begun while the author was visiting the Institute for Mathematical Sciences, National University of Singapore in 2016. The visit was supported by the Institute.}
\address {Department of Mathematics, University of Pennsylvania, 209 South 33rd Street, Philadelphia, PA 19104-6395, USA}
\email{htowsner@math.upenn.edu}
\urladdr{\url{http://www.math.upenn.edu/~htowsner}}

\begin{abstract}
We propose a new method for constructing Turing ideals satisfying principles of reverse mathematics below the Chain-Antichain Principle ($\mathbf{CAC}$).  Using this method, we are able to prove several new separations in the presence of Weak K\"onig's Lemma ($\mathbf{WKL}$), including showing that $\mathbf{CAC}+\mathbf{WKL}$ does not imply the thin set theorem for pairs, and that the principle ``the product of well-quasi-orders is a well-quasi-order'' is strictly between $\mathbf{CAC}$ and the Ascending/Descending Sequences principle, even in the presence of $\mathbf{WKL}$.  
\end{abstract}

\maketitle

\section{Introduction}

\begin{definition}
  A \emph{Turing ideal} is a collection $\mathcal{I}$ of sets such that whenever $X\in\mathcal{I}$ and the set $Y$ is computable from $X$, also $Y\in\mathcal{I}$, and whenever $X_1,X_2\in\mathcal{I}$, the join $X_1\oplus X_2\in\mathcal{I}$ as well.
\end{definition}

The principles we discuss here are usually formulated in the context of reverse mathematics, but since that formulation will not be needed here, we state them in terms of Turing ideals.  (Those familiar with reverse mathematics \cite{simpson99} will recognize that our main concern is constructing $\omega$-models witnessing various separations.)  We are interested in Turing ideals which exhibit certain closure properties: ideals $\mathcal{I}$ so that whenever $X\in\mathcal{I}$ encodes an \emph{instance} of problem a certain kind, $\mathcal{I}$ also contains some $Y$ which is a \emph{solution} to that instance.

An important example is:
\begin{definition}
  A Turing ideal $\mathcal{I}$ satisfies $\mathbf{WKL}$ (``Weak K\"onig's Lemma'') if whenever $T\in\mathcal{I}$ encodes an infinite tree of $\{0,1\}$ sequences, there is an infinite $\{0,1\}$ sequence $\Lambda\in\mathcal{I}$ so that for every $n$, $\Lambda\upharpoonright n\in T$.
\end{definition}

\begin{definition}
  We say that a principle $\mathbf{P}$ \emph{implies} $\mathbf{Q}$ if any Turing ideal satisfying $\mathbf{P}$ also satisfies $\mathbf{Q}$.
\end{definition}

All our other principles concern weakenings or variants of Ramsey's Theorem for pairs.  Recall that Ramsey's Theorem for pairs says that whenever $c:[\mathbb{N}]^2\rightarrow\{0,1\}$ is a coloring of pairs, there is an infinite homogeneous set: an infinite set $S\subseteq\mathbb{N}$ and an $i$ so that whenever $a,b\in S$, $c(a,b)=i$

Most of the weakenings we are interested in concern partial or total orders.  An ordering $\prec$ can be associated with a coloring by setting $c(a,b)=1$ iff $a\prec b$ (where we assume $a,b$ are ordered $a<b$ in the usual ordering on the natural numbers).

\begin{definition}
 A Turing ideal $\mathcal{I}$ satisfies $\mathbf{CAC}$ (``Chain-Antichain'') if whenever $\preceq$ is an $\mathcal{I}$-computable partial ordering, there is an infinite sequence $\Lambda$ in $\mathcal{I}$ which is either $\prec$-increasing, $\prec$-decreasing, or an antichain in $\prec$.
\end{definition}
This is equivalent to restricting Ramsey's Theorem for pairs to the special case where one of the colors is transitive \cite{MR2298478}.
\begin{definition}
  If $c:[\mathbb{N}]^2\rightarrow\mathbb{N}$ is a coloring, we say a color $i$ is \emph{transitive} if whenever $a_0<a_1<a_2$ with $c(a_0,a_1)=c(a_1,a_2)=i$, also $c(a_0,a_2)=i$.
\end{definition}

A natural further restriction is to ask that $\preceq$ be a linear ordering.
\begin{definition}
  A Turing ideal $\mathcal{I}$ satisfies $\mathbf{ADS}$ (``Ascending/Descending Sequences'') if whenever $\prec$ is an $\mathcal{I}$-computable linear ordering, there is an infinite sequence $\Lambda$ in $\mathcal{I}$ which is either $\prec$-increasing or $\prec$-decreasing.
\end{definition}
This is slightly stronger than requiring that both colors be transitive, but is equivalent at the level of Turing ideals.

\begin{definition}
  A Turing ideal $\mathcal{I}$ satisfies $\mathbf{trRT^2_k}$ (``transitive Ramsey's Theorem for pairs with $k$ colors'') if whenever $c:[\mathbb{N}]^2\rightarrow[1,k]$ is a coloring where all colors are transitive, there is an infinite set $S$ and an $i\in[1,k]$ so that whenever $a,b\in S$, $c(a,b)=i$.
\end{definition}

The basic relationships between $\mathbf{CAC}$, $\mathbf{ADS}$, and $\mathbf{trRT^2_k}$ are set out in \cite{MR2298478}.

\begin{lemma}[\cite{MR2298478}]
A Turing ideal satisfies $\mathbf{ADS}$ iff it satisfies $\mathbf{trRT^2_2}$.  
\end{lemma}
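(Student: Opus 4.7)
The plan is to verify both directions by direct reductions between colorings and orders, all within the same Turing ideal.

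For the direction $\mathbf{trRT^2_2}\Rightarrow\mathbf{ADS}$, suppose $\prec$ is an $\mathcal{I}$-computable linear ordering. Define $c:[\mathbb{N}]^2\to\{1,2\}$ by setting $c(a,b)=1$ if $a\prec b$ and $c(a,b)=2$ if $b\prec a$, where $a<b$ in the natural order. Both colors are transitive simply because $\prec$ is transitive. Apply $\mathbf{trRT^2_2}$ to obtain an infinite homogeneous set $S\in\mathcal{I}$; enumerating $S$ in natural order produces a $\prec$-increasing sequence (if the homogeneous color is $1$) or a $\prec$-decreasing sequence (if it is $2$).

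For the direction $\mathbf{ADS}\Rightarrow\mathbf{trRT^2_2}$, suppose $c$ is an $\mathcal{I}$-computable $\{1,2\}$-coloring with both colors transitive. Define a relation $\prec$ on $\mathbb{N}$: for distinct $a,b$ with, say, $a<b$ in the natural order, put $a\prec b$ iff $c(a,b)=1$, and $b\prec a$ iff $c(a,b)=2$. This is clearly total. The key step is verifying transitivity by case analysis on the natural order of $a,b,c$ in an instance $a\prec b\prec c$: there are six cases depending on which of $a,b,c$ is least and greatest, and in each case either the transitivity of color $1$ or of color $2$ directly yields $a\prec c$, often via a contradiction (e.g., if $a<c<b$ with $a\prec b$ and $b\prec c$, then $c(a,b)=1$ and $c(c,b)=2$; if also $c(a,c)=2$ then transitivity of color $2$ forces $c(a,b)=2$, a contradiction).

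Now apply $\mathbf{ADS}$ to the linear order $\prec$ to obtain an infinite $\prec$-monotone sequence $\Lambda=(\lambda_0,\lambda_1,\ldots)\in\mathcal{I}$. From $\Lambda$, computably extract the subsequence $\Lambda'$ of \emph{natural-order records}, i.e., those $\lambda_i$ with $\lambda_i>\max\{\lambda_0,\ldots,\lambda_{i-1}\}$; this subsequence is infinite because otherwise $\{\lambda_i\}$ would eventually lie in a finite initial segment of $\mathbb{N}$, forcing repetition and contradicting that $\prec$ is an order. On the set $\Lambda'$ the natural order and $\prec$ agree (if $\Lambda$ was $\prec$-increasing) or are opposite (if $\Lambda$ was $\prec$-decreasing), and unwinding the definition of $c$ shows $\Lambda'$ is $c$-homogeneous, of color $1$ or $2$ respectively.

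The only step requiring real attention is the transitivity check for $\prec$ in the forward direction; everything else is bookkeeping.
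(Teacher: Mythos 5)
Your proof is correct: the six-case transitivity check for the derived order $\prec$ goes through exactly as you indicate, and the record-extraction step is the needed (and often overlooked) repair for the fact that a $\prec$-monotone sequence need not list its range in natural order, with infinitude of the records following from distinctness of the terms; all objects produced are computable from data already in the ideal. The paper does not prove this lemma itself but cites Hirschfeldt and Shore, and your argument is essentially the standard one from that source, so there is nothing substantive to compare.
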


Furthermore, $\mathbf{CAC}$ implies $\mathbf{trRT^2_k}$ for any $k$, and $\mathbf{trRT^2_{k+1}}$ implies $\mathbf{trRT^2_k}$.

Showing that these implications do not reverse is more difficult.  Lerman, Solomon, and Towsner constructed a Turing ideal satisfying $\mathbf{ADS}$ but not $\mathbf{CAC}$ \cite{LST:MR3125903}, and Patey showed that a similar method can construct a Turing ideal satisfying $\mathbf{trRT^2_k}$ but not $\mathbf{CAC}$ \cite{MR3535177}.  (More precisely, Patey studies a principle shown to be very similar in \cite{MR3219047}.)  It is not known whether $\mathbf{trRT^2_k}$ implies $\mathbf{trRT^2_{k+1}}$.

Dzhafarov, Goh, and Shore asked whether these separations remain in the presence of $\mathbf{WKL}$.  As we will discuss in detail below, satisfying $\mathbf{WKL}$ appears to conflict with the method used in \cite{LST:MR3125903}, and a new approach to the separation is required.  Using this approach, we will show:
\begin{theorem}\label{thm:tr_vs_CAC}
  There is a Turing ideal satisfying $\mathbf{trRT^2_k}$ for all $k$ and $\mathbf{WKL}$ but not $\mathbf{CAC}$.
\end{theorem}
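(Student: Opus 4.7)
The plan is to construct $\mathcal{I}$ as the union of a countable chain $\mathcal{I}_0 \subseteq \mathcal{I}_1 \subseteq \cdots$ of countable Turing ideals. I would fix at the outset a specific computable partial order $P$ such that $P$ has no infinite computable $P$-chain or $P$-antichain, and would take as the invariant of the construction that no $\mathcal{I}_s$ contains an infinite $P$-chain or infinite $P$-antichain. By a standard bookkeeping argument, enumerate the pairs $(s, X_s)$ where $X_s$ is a set that will eventually appear in $\mathcal{I}$ and codes either (i) an infinite binary tree (a $\mathbf{WKL}$-requirement) or (ii) a transitive $k$-coloring $c:[\mathbb{N}]^2\to[1,k]$ (a $\mathbf{trRT^2_k}$-requirement); at stage $s$ we extend $\mathcal{I}_s$ to $\mathcal{I}_{s+1}$ by adjoining a solution to $X_s$, while maintaining the invariant. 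Since $P \in \mathcal{I}$ has no infinite chain or antichain, $\mathcal{I}$ will fail $\mathbf{CAC}$; since every $\mathbf{WKL}$- and every $\mathbf{trRT^2_k}$-requirement appears at some stage, the other two principles will hold.

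The heart of the argument is therefore an extension lemma: given a countable Turing ideal $\mathcal{I}$ in which $P$ has no infinite chain or antichain, and given either an infinite tree $T\in\mathcal{I}$ or a transitive $k$-coloring $c\in\mathcal{I}$, produce an $\mathcal{I}'\supseteq\mathcal{I}$ containing a path through $T$ (resp.\ a $c$-homogeneous set) such that $P$ still has no infinite chain or antichain in $\mathcal{I}'$. I would prove this by forcing, using conditions which are finite approximations to the desired solution together with finite restraints recording promises about which elements of $\mathbb{N}$ will never enter the solution. This is the sense in which the sequence is built ``one step at a time'': each extension of a condition commits to a single new element of the solution and can only be taken when one verifies combinatorially that it does not enable any set Turing-reducible to the resulting object to compute an infinite $P$-chain or $P$-antichain.

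The $\mathbf{WKL}$ case is expected to be standard: a Jockusch--Soare style tree forcing can be made to preserve the relevant $\Pi^0_2$-type non-computation property, provided the preservation property is phrased correctly. The real obstacle is the $\mathbf{trRT^2_k}$ case, and it is presumably the reason a new method is needed. A homogeneous set for a transitive coloring is highly constrained, and the usual combinatorial device for controlling what such a set computes --- iterated ``Seetapun-style'' tree arguments on the coloring --- tends to destroy whatever mechanism one uses to simultaneously satisfy $\mathbf{WKL}$, which is the conflict alluded to in the discussion of \cite{LST}. The technical work will be to design an extension step for $c$ that, at each finite stage, analyzes the transitive structure of $c$ on the already-chosen elements and on $P$ jointly, and produces a single new element of the homogeneous set together with a finite restraint certifying that no initial segment of any $\mathcal{I}'$-computable set could be extended to an infinite $P$-chain or $P$-antichain through it. Once this density/preservation argument is isolated as a lemma, iterating through the enumeration of requirements (interleaving $\mathbf{WKL}$- and $\mathbf{trRT^2_k}$-extensions for each $k$) yields the ideal in the theorem.
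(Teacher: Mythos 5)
There is a genuine gap, and it sits exactly where you wave your hands. Your plan fixes ``a specific computable partial order $P$ with no infinite computable chain or antichain'' at the outset and takes as invariant that no $\mathcal{I}_s$ contains a solution to $P$. For an arbitrary such $P$ the extension lemma you need is simply not available: whether one can add a path through every tree and a homogeneous set for every transitive coloring while preserving ``no solution to $P$'' depends on special combinatorial features of the instance, and the instance must therefore be \emph{constructed} together with the preservation property, not fixed in advance. Moreover, ``no infinite $P$-chain or $P$-antichain enters the ideal'' is not in a form one can force or verify stepwise; the paper replaces it by a tailored arithmetic invariant --- ``$c$ satisfies every \ADSSTS{} (with the appropriate range/transitivity restrictions) in $X$'' --- which is (i) strong enough to rule out $X$-computable solutions (Lemma \ref{thm:ads_sts_diat}), (ii) preservable when adjoining WKL-paths (Lemma \ref{thm:ads_sts_wkl_gen}) and solutions to the Ramsey-type problems (Lemmas \ref{thm:ads_sts_cac}, \ref{thm:ads_prod_ads}, \ref{thm:ads_prod_trrt}), and (iii) realized by some computable stable instance via a finite-injury construction (Lemma \ref{thm:ads_sts_exists_gen}). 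Your proposal contains no candidate for such an invariant, and your restraint mechanism (promising elements out of the solution) does not address the actual conflict: restraints on the \emph{instance} side must be imposed in blocks compatible with every branch of a finitely branching tree of attempts, which is precisely why the single-shot Lerman--Solomon--Towsner witnesses fail in the presence of $\mathbf{WKL}$ and why the paper's requirements are trees of simple block statements whose witnesses are found in stages, each stage restrained uniformly.

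Second, the step you defer --- ``design an extension step for $c$ that analyzes the transitive structure jointly'' --- is the entire content of the theorem. For $\mathbf{trRT^2_k}$ the paper must build, for each tuple of requirements, a single new requirement out of ``processes'' producing split $k$-tuples of monotone sequences with a common endpoint, organized by a reverse-lexicographic induction on types $K_{\vec d}$, with an activation/deactivation discipline that keeps the transitivity commitments of different witness blocks from colliding; nothing in your outline supplies this or an alternative to it. (A smaller remark: the paper does not diagonalize directly against a $\mathbf{CAC}$ instance for this theorem; it proves the stronger separations involving $\mathbf{ProdWQO}$, either of which yields Theorem \ref{thm:tr_vs_CAC} since $\mathbf{CAC}$ implies $\mathbf{ProdWQO}$. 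Your choice to diagonalize against a partial order is legitimate in principle --- the $\mathbf{SCAC}$ section does this --- but again only for an instance built to satisfy the requirements.) As it stands, your argument establishes only the routine outer iteration and leaves the new method, which is what the theorem actually demands, unspecified.
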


While considering this question, one naturally considers what else might be a consequence of $\mathbf{ADS}$ together with $\mathbf{WKL}$.  In particular, one asks whether these principles might imply other consequences of Ramsey's Theorem for pairs which do not follow from $\mathbf{CAC}$.  For example:
\begin{definition}
A Turing ideal $\mathcal{I}$ satisfies $\mathbf{TS(2)}$ (``Thin Sets for Pairs'') if whenever $c:[\mathbb{N}]^2\rightarrow\mathbb{N}$ is $\mathcal{I}$-computable function, there is an infinite set $S$ in $\mathcal{I}$ and a color $i$ so that there is no $x,y\in S$ with $c(x,y)=i$.
\end{definition}
This thin set principle was introduced in \cite{friedman_thinset} and further studied in \cite{MR2185429,patey_thinset,MR3213294}.

Using a similar method, we are able to show:
\begin{theorem}
  There is a Turing ideal satisfying $\mathbf{CAC}$ and $\mathbf{WKL}$ but not $\mathbf{TS(2)}$.
\end{theorem}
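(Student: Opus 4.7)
The plan is to apply the same one-step-at-a-time method used to prove Theorem~\ref{thm:tr_vs_CAC}, but adapted to the $\mathbf{CAC}$-vs-$\mathbf{TS(2)}$ setting. The goal is to build a Turing ideal $\mathcal{I}$ satisfying $\mathbf{CAC}+\mathbf{WKL}$ and containing a fixed computable coloring $c^*:[\mathbb{N}]^2\to\mathbb{N}$ that witnesses the failure of $\mathbf{TS(2)}$ in $\mathcal{I}$: for every infinite $S\in\mathcal{I}$ and every color $i$, there exist $x<y$ in $S$ with $c^*(x,y)=i$.

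First I would fix a computable ``diagonalizing'' coloring $c^*$ with enough richness that every infinite set arising from a sufficiently generic solution to a $\mathbf{CAC}$- or $\mathbf{WKL}$-instance realizes every color. Colorings of this kind are well-studied in the thin set literature (see \cite{patey_thinset,friedman_thinset}); one concrete approach encodes, via a suitable pairing, a sequence of independent ``meet-every-color'' commitments into the values $c^*(a,b)$, so that non-thinness for $c^*$ is preserved by all the standard basis-style constructions.

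Second I would run the iterative construction from the paper. The construction processes in turn every partial order and every infinite binary tree computable from the ideal built so far. For each tree, a path is added using standard cone-avoidance machinery appropriate for $\mathbf{WKL}$ combined with thin-set preservation. For each partial order, a chain, descending chain, or antichain is added using the forcing notion developed in the paper, with the central invariant that no infinite set computable from $\mathcal{I}$ is thin for $c^*$.

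The main obstacle is the $\mathbf{CAC}$-preservation step. One must show that every partial order $\preceq$ computable from the current ideal admits an infinite chain, descending chain, or antichain whose adjunction does not introduce an infinite thin set for $c^*$, and that this preservation is compatible with the simultaneous handling of $\mathbf{WKL}$-instances. This is the analog, at the $\mathbf{CAC}$ level, of the $\mathbf{ADS}$-level preservation argument that drives Theorem~\ref{thm:tr_vs_CAC}: the same forcing skeleton should apply, with ``non-thinness for $c^*$'' substituted for the transitive-coloring preservation used there, but the combinatorics must be verified for each of the three solution types (ascending chain, descending chain, antichain) rather than only the two relevant to $\mathbf{ADS}$. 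Once this preservation lemma is established, the remaining verifications---that $\mathcal{I}$ satisfies $\mathbf{WKL}$ and $\mathbf{CAC}$ and that $c^*$ witnesses the failure of $\mathbf{TS(2)}$ in $\mathcal{I}$---follow from the standard bookkeeping of the construction.
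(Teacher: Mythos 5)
There is a genuine gap here: what you have written is the architecture of the argument, but not its content. Your plan --- fix a computable diagonalizing coloring in advance, then iteratively add $\mathbf{CAC}$-solutions and $\mathbf{WKL}$-paths while preserving ``non-thinness'' --- defers exactly the two points where all the difficulty lives. First, the preservation property cannot simply be ``no infinite set computable from the ideal is thin for $c^*$,'' with $c^*$ taken off the shelf from the thin-set literature. The property that is actually maintained in the paper is the much more structured statement that $c$ satisfies every \emph{requirement} in $X$, where a requirement is a finite tree of simple block statements with witness blocks placed into the sets $A^*_i(c)$ in stages; non-thinness is then a \emph{consequence} (Lemma \ref{thm:ads_sts_diat}), but it is the staged, syntactic form of the requirements that makes the inductive preservation possible. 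Correspondingly, the coloring is not fixed first and independently: it is built by a finite injury argument (Lemma \ref{thm:ads_sts_exists}) to satisfy all requirements over $\emptyset$, so the requirement notion and the construction of $c$ have to be designed hand in hand with the preservation lemmas.

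Second, your appeal to ``standard cone-avoidance machinery appropriate for $\mathbf{WKL}$ combined with thin-set preservation'' and to ``the same forcing skeleton'' for $\mathbf{CAC}$ is precisely where the known methods break down, and the paper is explicit about this: in the presence of $\mathbf{WKL}$ one must work with a finitely branching tree of attempted solutions, and witnesses found along different branches can conflict (a witness destined for $A^*_0$ on one branch may be needed in $A^*_1$ on another), which is why the Lerman--Solomon--Towsner style single-shot preservation does not go through. The entire point of the staged requirement format --- all witnesses found at a given stage are restrained identically, and failure to reach the next stage must already secure the requirement --- is to make the preservation compatible with $\mathbf{WKL}$ (Lemma \ref{thm:ads_sts_wkl_gen}) and with the rather intricate $\mathbf{CAC}$ forcing with trial and partnered antichains (Lemma \ref{thm:ads_sts_cac}). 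Saying that ``the combinatorics must be verified for each of the three solution types'' names the problem but does not solve it; without an argument that some preservation property is simultaneously strong enough to kill $\mathbf{TS(2)}$-solutions, preservable under a chain/antichain forcing, preservable under paths through $\Pi^0_1$ classes, and realizable by a computable stable coloring, the proposal does not yet constitute a proof.
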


Hirschfeldt and Shore ask \cite{MR2298478} whether the $\mathbf{trRT^2_k}$ hierarchy is strict.
\begin{question}
  Does $\mathbf{trRT^2_k}$ imply $\mathbf{trRT^2_{k+1}}$?
\end{question}
Normally adding more colors does not change the difficulty of satisfying a Ramsey theoretic principle: one ``merges'' two of the colors into a single color and then applies the Ramsey theoretic argument repeatedly.  But this fails with $\mathbf{trRT^2_k}$ because the merged color may not be transitive.

Asking how we should strengthen the statement to allow such a merger of colors leads us to define:
\begin{definition}
A Turing ideal $\mathcal{I}$ satisfies $\mathbf{ProdWQO}$ (``Products of WQOs are WQO'') if whenever $c:[\mathbb{N}]^2\rightarrow\{0,1,2\}$ and the colors $1$ and $2$ are transitive, there is an infinite set $S$ and an $i\in\{1,2\}$ so that whenever $a,b\in S$, $c(a,b)\neq i$.
\end{definition}
(The name will be justified below.)  That is, we have a coloring with two transitive colors and one color which need not be transitive where we can always \emph{omit} one of the transitive colors.

\begin{lemma}[\cite{MR2078917}]
$\mathbf{CAC}$ implies $\mathbf{ProdWQO}$.
\end{lemma}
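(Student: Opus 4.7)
The plan is to apply $\mathbf{CAC}$ to a single partial order read off from color $1$ of $c$; the two outcomes of CAC that survive will correspond directly to the two admissible values of $i \in \{1,2\}$ in the conclusion of $\mathbf{ProdWQO}$.

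Fix a coloring $c \in \mathcal{I}$ with colors $1$ and $2$ transitive, and define $\preceq$ on $\mathbb{N}$ by: $a \preceq b$ iff either $a = b$, or $a < b$ in the natural order on $\mathbb{N}$ and $c(a,b) = 1$. Reflexivity and antisymmetry are immediate, and transitivity of $\preceq$ is precisely the hypothesis that color $1$ is transitive. Since $\preceq$ is computable from $c$, $\mathbf{CAC}$ applied inside $\mathcal{I}$ produces an infinite sequence $\Lambda \in \mathcal{I}$ which is $\prec$-increasing, $\prec$-decreasing, or an antichain. The $\prec$-decreasing case is vacuous, because $a \prec b$ forces $a < b$ in $\mathbb{N}$, so an infinite $\prec$-descending sequence would be an infinite $<$-descending sequence of naturals.

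If $\Lambda$ is $\prec$-increasing, write $S = \rng(\Lambda) = \{a_0 < a_1 < \cdots\}$; then $c(a_i, a_{i+1}) = 1$ for each $i$, and transitivity of color $1$ propagates this to $c(a_i, a_j) = 1$ for all $i < j$. Thus $S$ is homogeneous in color $1$, so no pair in $S$ has color $2$, and the witness $i = 2$ works for the conclusion of $\mathbf{ProdWQO}$. If $\Lambda$ is an antichain, then for all $a < b$ in $S := \rng(\Lambda)$ we have $a \not\prec b$, i.e.\ $c(a,b) \neq 1$, so the witness $i = 1$ works. In either case $S \in \mathcal{I}$ because $\Lambda \in \mathcal{I}$.

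The only subtle step, and the main observation of the proof, is that $\preceq$ refines the natural order on $\mathbb{N}$; this simultaneously eliminates CAC's third outcome and packages the remaining two outcomes into exactly the two ProdWQO-witnesses (color-$1$ homogeneous $\Rightarrow$ color $2$ is avoided; color-$1$ antichain $\Rightarrow$ color $1$ is avoided). No further work is needed, and in fact the transitivity of color $2$ plays no role in the argument, so by symmetry one could equally well have defined $\preceq$ from color $2$.
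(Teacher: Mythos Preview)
Your proof is correct. The paper does not include its own proof of this lemma; it is stated with a citation to \cite{MR2078917} and no argument is given. Your approach---define a partial order from color~$1$, apply $\mathbf{CAC}$, observe that the $\prec$-decreasing case is vacuous because $\prec$ refines~$<$, and read off the two $\mathbf{ProdWQO}$ outcomes from the chain and antichain cases---is the standard one, and your observation that transitivity of color~$2$ is never used is also correct.
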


Frittaion, Marcone, and Shafer pointed out that $\mathbf{ProdWQO}$ implies $\mathbf{ADS}$.
\begin{lemma}
$\mathbf{ProdWQO}$ implies $\mathbf{trRT^2_k}$ for any $k$, and so also $\mathbf{ADS}$.
\end{lemma}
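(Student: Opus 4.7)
The plan is to induct on $k$. The base case $k = 1$ is trivial since any infinite set is homogeneous. The key idea for the inductive step is a \emph{merging} trick: given transitive colors $1, \ldots, k$, collapse colors $3, \ldots, k$ into a single (possibly non-transitive) auxiliary color $0$, keeping colors $1$ and $2$ separate. Then $\mathbf{ProdWQO}$ applies to this three-coloring and yields an infinite subset on which one of the two preserved transitive colors is avoided, cutting the palette down to $k - 1$ transitive colors.

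Concretely, given $k \geq 2$ and an $\mathcal{I}$-computable coloring $c : [\mathbb{N}]^2 \to [1, k]$ with every color transitive, define $c' : [\mathbb{N}]^2 \to \{0, 1, 2\}$ by $c'(a, b) = c(a, b)$ if $c(a, b) \in \{1, 2\}$ and $c'(a, b) = 0$ otherwise. (If $k = 2$, color $0$ is simply never used.) Since $c'$ records color $i$ for $i \in \{1, 2\}$ exactly when $c$ does, transitivity of these two colors is inherited from $c$. Applying $\mathbf{ProdWQO}$ to $c'$ produces an infinite $S \in \mathcal{I}$ and $i \in \{1, 2\}$ with $c'(a, b) \neq i$ for all $a, b \in S$. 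Then $c \upharpoonright [S]^2$ takes values in $[1, k] \setminus \{i\}$, a palette of $k - 1$ transitive colors; since this restriction is computable from $c$ and $S$, it lies in $\mathcal{I}$. When $k = 2$ this already gives a homogeneous set; for $k \geq 3$ the inductive hypothesis supplies a further homogeneous subset. The ``also $\mathbf{ADS}$'' assertion is then immediate from the $k = 2$ case together with the earlier lemma identifying $\mathbf{trRT^2_2}$ with $\mathbf{ADS}$.

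I do not expect a real obstacle here: the verification is routine once one notices that $\mathbf{ProdWQO}$ is exactly the strengthening of $\mathbf{trRT^2_k}$ needed to make the merging argument go through. The usual merging strategy fails for $\mathbf{trRT^2_k}$ itself because the merged color need not remain transitive, so the inductive hypothesis cannot be invoked; $\mathbf{ProdWQO}$ was defined precisely to tolerate one non-transitive color in the palette, which is what enables the inductive reduction.
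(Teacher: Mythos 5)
Your proof is correct and follows essentially the same route as the paper: collapse all but two transitive colors into the non-transitive color $0$, apply $\mathbf{ProdWQO}$ to discard one of the pair, and iterate (your induction on $k$ is just a repackaging of the paper's iteration through pairs of colors, including the same relabeling of the subset via the order-preserving bijection). No gap to report.
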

\begin{proof}

  Let $c:[\mathbb{N}]^2\rightarrow [1,k]$ be a transitive coloring.  For any pair $i\neq j$ in $[1,k]$, define the coloring $c_{i,j}:[\mathbb{N}]^2\rightarrow[0,1,2]$ given by 
\[c_{i,j}(a,b)=\left\{\begin{array}{cc}
1&\text{if }c(a,b)=i\\
2&\text{if }c(a,b)=j\\
0&\text{otherwise}
\end{array}\right.\]
By $\mathbf{ProdWQO}$ applied to $c_{1,2}$, we have an infinite set $S$ omitting either color $1$ or color $2$; without loss of generality, we assume $S$ omits $1$.  Applying $\mathbf{ProdWQO}$ to $c_{2,3}$ (more precisely, let $\pi:\mathbb{N}\rightarrow S$ be the unique injective, order-preserving map, define $c'_{2,3}(i,j)=c_{2,3}(\pi(i),\pi(j))$, and apply $\mathbf{ProdWQO}$ to $c'_{2,3}$) restricted to the set $S$, we omit a second color.  We iterate this until only one color is remaining, at which point the set must be homogeneous.
\end{proof}

Although we phrase it here in terms of transitive colorings, $\mathbf{ProdWQO}$ is more naturally seen as the statement that a product of well-quasi-orders is also well-quasi-ordered.  Recall that a partial ordering $\preceq$ is well-quasi-ordered if whenever $\langle a_1,a_2,\ldots\rangle$ is an infinite sequence, there exist $i<j$ so that $a_i\preceq a_j$.  An infinite sequence $\langle a_1,a_2,\ldots\rangle$ is \emph{bad} if it witnesses the failure to be a well-quasi-order: whenever $i<j$, $a_i\not\preceq a_j$.

The product $\preceq=\preceq_1\times\preceq_2$ of two quasi-orderings is given by $a\preceq b$ iff both $a\preceq_1 b$ and $a\preceq_2 b$.  To say that the product of two well-quasi-orders is also well-quasi-ordered is the same as saying that whenever we have a product $\preceq=\preceq_1\times\preceq_2$ and an infinite bad sequence in $\preceq$ then we must have an infinite bad sequence in either $\preceq_1$ or in $\preceq_2$.  If we define a coloring 
\[c(a,b)=\left\{\begin{array}{cc}
1&\text{if }a\preceq_1 b\\
2&\text{if }a\preceq_2 b\\
0&\text{otherwise}
\end{array}\right.\]
then this is well-defined on an infinite bad sequence (because we cannot have both $a\preceq_1 b$ and $a\preceq_2 b$).  The colors $1$ and $2$ are transitive while $0$ need not be.  Finding a bad sequence in $\preceq_i$ exactly means finding an infinite sequence avoiding $i$, which is precisely what our formulation of $\mathbf{ProdWQO}$ says.

Our remaining results show that $\mathbf{ProdWQO}$ is properly intermediate between $\mathbf{ADS}$ and $\mathbf{CAC}$.
\begin{theorem}\ 
  \begin{itemize}
  \item There is a Turing ideal satisfying $\mathbf{trRT^2_k}$ for all $k$ and $\mathbf{WKL}$ but not $\mathbf{ProdWQO}$.
  \item There is a Turing ideal satisfying $\mathbf{ProdWQO}$ and $\mathbf{WKL}$ but not $\mathbf{CAC}$.
  \end{itemize}
\end{theorem}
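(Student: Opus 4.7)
My plan is to adapt the step-by-step construction from Theorem~\ref{thm:tr_vs_CAC}. In each case, build the target ideal $\mathcal{I}$ in countably many stages, handling one instance of $\mathbf{WKL}$ or of the weaker principle per stage. All ongoing partial solutions are extended by a single element at a time, so at each stage only a one-step extension must be shown to be compatible with preservation of a fixed witness to the failure of the stronger principle.

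\textbf{Part 1 ($\mathbf{trRT^2_k}+\mathbf{WKL}$ but not $\mathbf{ProdWQO}$).} I would fix a computable coloring $c_*:[\mathbb{N}]^2\to\{0,1,2\}$ with transitive colors $1$ and $2$, designed so that any infinite set omitting color $1$ (respectively color $2$) encodes enough information to be excluded from the step-by-step closure. The preservation goal is that $\mathcal{I}$ contains no infinite set omitting $1$ and no infinite set omitting $2$. Instances of $\mathbf{trRT^2_k}$ and $\mathbf{WKL}$ are handled essentially as in Theorem~\ref{thm:tr_vs_CAC}; the only novelty is that the preservation condition is a \emph{dual} avoidance requirement with respect to $c_*$, rather than the single chain/antichain avoidance used to separate from $\mathbf{CAC}$. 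Since $\mathbf{trRT^2_k}$ produces fully homogeneous sets, its one-step extensions are more robust than those for $\mathbf{ProdWQO}$ and can be accommodated while respecting both avoidance constraints.

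\textbf{Part 2 ($\mathbf{ProdWQO}+\mathbf{WKL}$ but not $\mathbf{CAC}$).} Take $\preceq_*$ to be a computable partial order with no infinite chain and no infinite antichain, as constructed in \cite{LST}. Preservation: neither an infinite $\preceq_*$-chain nor an infinite $\preceq_*$-antichain enters $\mathcal{I}$. Handle $\mathbf{ProdWQO}$-instances by introducing \SPROD{}s that record, for each ongoing partial solution, which of the two transitive colors will be omitted. The binary choice governed by each \SPROD{} supplies the construction with exactly the flexibility that $\mathbf{ADS}$-requirements provided in the LST separation of $\mathbf{ADS}$ from $\mathbf{CAC}$, so the combinatorics of avoiding $\preceq_*$-chains and antichains carries over with routine modifications.

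The main obstacle in both parts is the interaction of $\mathbf{WKL}$ with the preservation constraints. Classical approaches satisfy $\mathbf{WKL}$ by passing through low or PA degrees, but such degrees compute infinite $\preceq_*$-chains and antichains (for Part 2) and infinite color-omitting sets for $c_*$ (for Part 1), which would destroy the separation. The step-by-step method sidesteps this by never committing to a completed $\mathbf{WKL}$-path in isolation; instead, the path is extended one node at a time in parallel with all active Ramsey-type partial solutions. The technical heart of both proofs will be a simultaneous one-step extension lemma: given any finite state of the construction, some element of $\mathbb{N}$ can be appended to every ongoing partial solution—including the partial $\mathbf{WKL}$-path—while maintaining the preservation of the target failure. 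Proving this lemma under the dual avoidance condition (Part 1) and under the chain-and-antichain avoidance condition via \SPROD{}s (Part 2), together with a standard fairness argument that every instance is eventually served, is where I expect essentially all of the work to lie.
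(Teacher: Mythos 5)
Your plan is essentially the one-element-at-a-time LST strategy, and that is precisely the approach this paper argues breaks down once $\mathbf{WKL}$ is added; the ``simultaneous one-step extension lemma'' you defer everything to is where the known obstruction sits, not a routine verification. While a $\mathbf{WKL}$ instance is pending you are not extending a single partial solution but a finitely branching tree of candidate paths, and diagonalizing against $\mathbf{ProdWQO}$ (resp.\ $\mathbf{CAC}$) requires restraining the colors of computation witnesses appearing along each candidate path; witnesses arising in different branches can demand incompatible restraints (the $a_0$ of one branch may be the $b_1$ of another), and no choice of a single next element resolves that conflict. The paper's way out is not a one-step extension lemma but the staged, block-structured notion of requirement: witnesses are gathered in stages, all witnesses found at a given stage (across all branches) are restrained uniformly, and satisfaction is defined so that failure to complete the next stage already secures the requirement. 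Moreover, $\mathbf{WKL}$ is actually solved by passing to an initial segment together with an infinite computable subtree (Lemma \ref{thm:ads_sts_wkl_gen}), not by appending one node at a time; node-by-node commitments cannot by themselves guarantee that the limit is a path of the given tree while simultaneously preserving the diagonalization. None of this machinery appears in your proposal, so neither part's $\mathbf{WKL}$ step goes through as sketched.

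The preservation invariant is also mis-specified. You propose to fix in advance a coloring $c_*$ ``designed so that'' solutions are excluded, and for Part 2 a computable partial order ``with no infinite chain and no infinite antichain'' (no such order exists; presumably you mean no computable solution, but even that is far too weak, since you must defeat every set computable from arbitrary finite joins of the generics you add). What the paper preserves is the much stronger statement that the instance satisfies every \ADSSTS{} (respectively \SPROD{} or \SCAC{}) relative to the current oracle: Lemma \ref{thm:ads_sts_diat} converts this into ``no solution in the ideal,'' Lemma \ref{thm:ads_sts_exists_gen} builds a computable instance with this property by finite injury, and every solution-adding step must re-establish it --- for instance Lemma \ref{thm:ads_prod_trrt} for $\mathbf{trRT^2_k}$ over the $\mathbf{SProdWQO}$ instance (split $k$-tuples and an induction on process types), and the prediction/backbone/perpendicular-solution forcing for $\mathbf{ProdWQO}$ over the $\mathbf{SCAC}$ instance. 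These are substantial new arguments, not ``routine modifications'' of the LST combinatorics, and your claim that $\mathbf{trRT^2_k}$-solutions are ``more robust'' does not engage with the actual difficulty of preserving the requirements while building the homogeneous set. As it stands the proposal restates the problem rather than proving the theorem.
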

Of course, either of these results implies Theorem \ref{thm:tr_vs_CAC}.

Finally, we note that all these principles have a stable version.
\begin{definition}
  A coloring of pairs $c:[\mathbb{N}]^2\rightarrow\mathbb{N}$ is \emph{stable} if for every $a$ there are $i$ and $j$ so that whenever $j\leq b$, $c(a,b)=i$.

$\mathbf{SADS}$ (respectively $\mathbf{SCAC}$, $\mathbf{STS(2)}$, $\mathbf{SProdWQO}$, $\mathbf{StrRT^2_k}$) is the principle $\mathbf{ADS}$ (respectively $\mathbf{CAC}$, $\mathbf{TS(2)}$, $\mathbf{ProdWQO}$, $\mathbf{trRT^2_k}$) restricted to stable instances.
\end{definition}
In fact, all our results also apply to the stable versions of these principles; that is, when we show that we fail to satisfy a principle, we always fail to satisfy a stable instance.

The author is grateful to Frittaion, Marcone, and Shafer for pointing out that $\mathbf{ProdWQO}$ is between $\mathbf{ADS}$ and $\mathbf{CAC}$ and raising the question of where it fits.  Some of the ideas leading to the work here were developed in discussions with Kuyper, Lempp, Miller, and Soskova.  Finally, Patey provided feedback and suggestions on a long strong of initial attempts at this work, including pointing the author towards the crucial obstacles and suggesting several ways that the results in this paper could be strengthened.

\section{Separating $\mathbf{STS(2)}$}

In this section we construct a computable instance $c$ of $\mathbf{STS}(2)$ and then construct a Turing ideal $\mathcal{I}$ which has no solution to $c$, but does satisfy both $\mathbf{CAC}$ and $\mathbf{WKL}$.

Since this is the prototype for our other arguments, we take a moment to outline the structure.  The ideal $\mathcal{I}$ will be defined by recursively building a sequence $I_1, I_2,\ldots$ of sets and taking $\mathcal{I}$ to be those things computable from $\oplus_{i\leq n}I_i$ for some $n$.  Given $X=\oplus_{i\leq n}I_i$ for some $n$, we will define the notion of a \emph{\ADSSTS{}} (computable) in $X$, and the notion of when a particular instance $c$ of $\mathbf{STS}(2)$ \emph{satisfies} a given \ADSSTS{} in an oracle $X$.  We will then prove:
\begin{enumerate}
\item if $c$ satisfies all \ADSSTS{}s in $X$ then there is no $X$-computable solution to $c$ (Lemma \ref{thm:ads_sts_diat}),
\item if $c$ satisfies all \ADSSTS{}s in $X$ and $\preceq$ is an $X$-computable partial ordering then there is an infinite chain or antichain $\Lambda$ so that $c$ satisfies all \ADSSTS{}s in $X\oplus\Lambda$ (Lemma \ref{thm:ads_sts_cac}),
\item if $c$ satisfies all \ADSSTS{}s in $X$ and $U$ is an infinite $X$-computable $\{0,1\}$-branching tree then there is an infinite branch $\Lambda$ so that $c$ satisfies all \ADSSTS{}s in $X\oplus\Lambda$ (Lemma \ref{thm:ads_sts_wkl}), and
\item there exists a computable stable $c$ satisfying all \ADSSTS{}s in $\emptyset$ (Lemma \ref{thm:ads_sts_exists}).
\end{enumerate}

These four pieces give the desired result:
\begin{theorem}\label{thm:ads_sts}
  There is a computable stable $c:[\mathbb{N}]^2\rightarrow\mathbb{N}$ and a Turing ideal $\mathcal{I}$ so that:
  \begin{itemize}
  \item if $I\in \mathcal{I}$ is infinite then $c\upharpoonright [I]^2=\mathbb{N}$,
  \item $\mathcal{I}$ satisfies $\mathbf{CAC}$, and
  \item $\mathcal{I}$ satisfies $\mathbf{WKL}$.
  \end{itemize}
\end{theorem}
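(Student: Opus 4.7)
The plan is to deduce Theorem \ref{thm:ads_sts} by iteratively assembling the ideal $\mathcal{I}$ out of finitely many joins, using Lemma \ref{thm:ads_sts_exists} to start and Lemmas \ref{thm:ads_sts_cac} and \ref{thm:ads_sts_wkl} to handle $\mathbf{CAC}$ and $\mathbf{WKL}$ instances one at a time. Concretely, I will fix a computable enumeration of all triples $(n,e,t)$ where $n\in\mathbb{N}$, $e$ is a Turing-functional index, and $t\in\{\mathrm{cac},\mathrm{wkl}\}$, and then define inductively a sequence $I_0=\emptyset, I_1, I_2,\ldots$ together with oracles $X_n=\oplus_{i\le n}I_i$.

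At stage $s+1$, the construction looks at the next triple $(n,e,t)$, with $n\le s$, and sets $X:=X_n$. If $t=\mathrm{cac}$ and $\Phi_e^X$ computes (a code for) a partial ordering $\preceq$ on $\mathbb{N}$, then by Lemma \ref{thm:ads_sts_cac} we may choose an infinite chain or antichain $\Lambda$ for $\preceq$ such that $c$ satisfies all \ADSSTS{}s in $X\oplus\Lambda$, and we put $I_{s+1}:=\Lambda$. If $t=\mathrm{wkl}$ and $\Phi_e^X$ computes an infinite $\{0,1\}$-branching tree $U$, then by Lemma \ref{thm:ads_sts_wkl} we choose an infinite branch $\Lambda$ through $U$ such that $c$ satisfies all \ADSSTS{}s in $X\oplus\Lambda$, and again set $I_{s+1}:=\Lambda$. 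In all other cases set $I_{s+1}:=\emptyset$. The base case is provided by Lemma \ref{thm:ads_sts_exists}, which furnishes a computable stable $c$ such that $c$ satisfies all \ADSSTS{}s in $X_0=\emptyset$. An immediate induction, using the ``preservation'' clauses of Lemmas \ref{thm:ads_sts_cac} and \ref{thm:ads_sts_wkl}, gives that $c$ satisfies all \ADSSTS{}s in $X_n$ for every $n$.

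Finally let $\mathcal{I}:=\{Y : Y\le_T X_n \text{ for some } n\}$; this is a Turing ideal by construction. To verify the three conclusions: for $\mathbf{CAC}$, any $\mathcal{I}$-computable partial ordering $\preceq$ is $\Phi_e^{X_n}$ for some $n,e$, and when the corresponding triple $(n,e,\mathrm{cac})$ is processed we add a chain or antichain for $\preceq$ into $\mathcal{I}$; similarly for $\mathbf{WKL}$. For the thin-set failure, suppose $I\in\mathcal{I}$ is infinite; then $I\le_T X_n$ for some $n$, so $I$ would be an $X_n$-computable thin set for $c$, contradicting Lemma \ref{thm:ads_sts_diat} applied to $X_n$. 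Hence $c\upharpoonright[I]^2=\mathbb{N}$ for every infinite $I\in\mathcal{I}$.

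The real work all lives in the four lemmas; the combination step above is essentially bookkeeping, so I do not anticipate any obstacle in this part beyond writing down the enumeration carefully and checking that each invocation of Lemmas \ref{thm:ads_sts_cac} and \ref{thm:ads_sts_wkl} has its hypothesis satisfied, which is exactly the inductive invariant ``$c$ satisfies all \ADSSTS{}s in $X_n$'' maintained throughout. The genuinely hard step of the paper will be formulating the notion of \ADSSTS{} so that the preservation lemmas (\ref{thm:ads_sts_cac} and \ref{thm:ads_sts_wkl}) can be proved simultaneously with the diagonalization lemma \ref{thm:ads_sts_diat}, but that work is deferred to the subsequent sections.
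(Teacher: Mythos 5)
Your proposal follows the same decomposition as the paper: fix $c$ from Lemma~\ref{thm:ads_sts_exists}, iteratively enrich the ideal by applying Lemmas~\ref{thm:ads_sts_cac} and~\ref{thm:ads_sts_wkl}, and use Lemma~\ref{thm:ads_sts_diat} to deny solutions. The bookkeeping step, however, contains a real error in the choice of oracle.

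At stage $s+1$ you set $X := X_n$ (where $n \le s$ is the index from the triple being processed) and invoke the preservation lemma to obtain a $\Lambda$ with ``$c$ satisfies all \ADSSTS{}s in $X_n\oplus\Lambda$.'' But after setting $I_{s+1}:=\Lambda$, the invariant you are trying to maintain is about $X_{s+1} = X_s\oplus\Lambda$, not $X_n\oplus\Lambda$. Since $X_n\oplus\Lambda \le_T X_s\oplus\Lambda$, the class of \ADSSTS{}s (relative to an oracle) only grows as the oracle grows, so satisfying all requirements relative to $X_n\oplus\Lambda$ is a \emph{weaker} statement than what the induction needs; the elements $I_{n+1},\ldots,I_s$ added between stage $n$ and stage $s$ could already have introduced requirements relative to $X_s\oplus\Lambda$ that $c$ fails. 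Consequently the line ``an immediate induction \ldots gives that $c$ satisfies all \ADSSTS{}s in $X_n$ for every $n$'' does not actually follow from the construction as written.

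The repair is trivial and brings you in line with what the paper does implicitly: observe that the instance $\Phi_e^{X_n}$ is also $X_s$-computable, so apply Lemma~\ref{thm:ads_sts_cac} (resp.~Lemma~\ref{thm:ads_sts_wkl}) with $X := X_s$, the join of everything built so far. Then the conclusion is precisely ``$c$ satisfies all \ADSSTS{}s in $X_s\oplus\Lambda = X_{s+1}$'' and the induction closes. The rest of your argument --- the enumeration of triples, the verification of $\mathbf{CAC}$ and $\mathbf{WKL}$, and the use of Lemma~\ref{thm:ads_sts_diat} to show $c\upharpoonright[I]^2=\mathbb{N}$ for infinite $I\in\mathcal I$ --- is correct.
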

\begin{proof}
 We take the $c$ given by Lemma \ref{thm:ads_sts_exists} and then use Lemma \ref{thm:ads_sts_cac} and Lemma \ref{thm:ads_sts_wkl} to recursively define the sets $I_i$ so that $c$ satisfies all \ADSSTS{}s in $\oplus_{i\leq n}I_i$, so that if $\preceq$ is an $\oplus_{i\leq n}I_i$-computable partial ordering the there is some $k$ so that $I_k$ is an infinite chain or antichain, and so that if $U$ is an infinite $\oplus_{i\leq n}I_i$-computable $\{0,1\}$-branching tree then there is some $k$ so that $I_k$ is an infinite branch of $U$.  Then the Turing ideal consisting of all sets computable from $\oplus_{i\leq n}I_i$ for some $n$ will have the desired properties.
\end{proof}

\subsection{Requirements}

\begin{definition}
  Let $c:[\mathbb{N}]^2\rightarrow\mathbb{N}$ be stable.  For each $i$, $A^*_i(c)$ consists of those $n$ so that, for cofinitely many $m$, $c(n,m)=i$.
\end{definition}
Clearly the $A^*_i(c)$ are disjoint; stability implies that they form a partition of $\mathbb{N}$.

\begin{definition}
 A \emph{simple block statement in $X$} is a set computable from an oracle $X$ of the form $K^X(b,\vec a)$ (with the groups of variables distinguished) which is monotone in the second parameter---that is, $K^X(b,\vec a')$ and $\vec a'\subseteq \vec a$ implies $K^X(b,\vec a)$.
\end{definition}
The parameters are intended as follows:
\begin{itemize}
\item $b$ is an auxiliary datum,
\item $\vec a$ is a set of witnesses which might be in $A^*_i(c)$ for some $i$.
\end{itemize}

\begin{definition}
A \emph{\ADSSTS} $R=(T,\{K_\sigma\}_{\sigma\in T},\{d_\sigma\}_{\sigma\in T})$ is a finite, finitely branching tree $T$, for each $\sigma\in T$ a simple block statement $K_\sigma$ and a function $d_\sigma:\dom(\sigma)\rightarrow\mathbb{N}$, and so that $K_{\langle\rangle}$ is always true.

For any $\sigma\in T$, any $c:[\mathbb{N}]^2\rightarrow\mathbb{N}$, and any oracle $X$, the \emph{positive requirement component at $\sigma$} is the formula $\Delta^X_{R;\sigma}(c,b_0,\ldots,b_{|\sigma|-1},\vec a_0,\ldots,\vec a_{|\sigma|-1})$ which holds if, for each $i< |\sigma|$, $K^X_{\sigma\upharpoonright(i+1)}((b_0,\ldots,b_{i}),\vec a_i)$ holds.

If $\sigma\in T$ is a leaf, $\Theta^X_{R;\sigma}(c)$ is the formula which holds if there exist $b_0,\ldots,b_{|\sigma|-1},\vec a_0,\ldots,\vec a_{|\sigma|-1}$ so that:
\begin{itemize}
\item $\vec a_i\in A^*_{d_\sigma(i)}(c)$,
\item $\Delta^X_{R;\sigma}(c,b_0,\ldots,b_{|\sigma|-1},\vec a_0,\ldots,\vec a_{|\sigma|-1})$ holds.
\end{itemize}

If $\sigma\in T$ is not a leaf, $\Theta^X_{R;\sigma}(c)$ is the formula which holds if there exist $b_0,\ldots,b_{|\sigma|-1},\vec a_0,\ldots,\vec a_{|\sigma|-1}$ and a $t$ so that:
\begin{itemize}
\item $\vec a_i\in A^*_{d_\sigma(i)}(c)$,
\item $\Delta^X_{R;\sigma}(c,b_0,\ldots,b_{|\sigma|-1},\vec a_0,\ldots,\vec a_{|\sigma|-1})$,
\item there do not exist $b, \vec a$, and $\tau$ an immediate extension of $\sigma$ in $T$ so that $t<\vec a$ and $\Delta^X_{R;\tau}(c,b_0,\ldots,b_{|\sigma|-1},b,\vec a_0,\ldots,\vec a_{|\sigma|-1},\vec a)$.
\end{itemize}


We say $c$ \emph{satisfies a \ADSSTS{} $R=(T,\{K_\sigma\}_{\sigma\in T},\{d_\sigma\}_{\sigma\in T})$ in $X$} if there is some $\sigma\in T$ so that $\Theta^X_{R;\sigma}(c)$ holds.
\end{definition}

We will sometimes wish to work with requirements satisfying certain restrictions.
\begin{definition}
  A \ADSSTS{} $R=(T,\{K_\sigma\}_{\sigma\in T},\{d_\sigma\}_{\sigma\in T})$ has \emph{range $I$} if for every $\sigma\in T$, $\rng(d_\sigma)\subseteq I$.

  A \ADSSTS{} $R=(T,\{K_\sigma\}_{\sigma\in T},\{d_\sigma\}_{\sigma\in T})$ is \emph{transitive in color $i$} if whenever $\tau\sqsubsetneq\sigma$, $j<|\tau|$, $d_\tau(j)=i$, and $d_\sigma(|\tau|)=i$, then $d_\sigma(j)=i$.
\end{definition}

While we mostly find it natural to work with trees of requirement, we note that it does suffice to consider linear ones.
\begin{definition}
  A \ADSSTS{} is \emph{linear} if $\sigma\in T$ implies $\sigma$ has the form $\langle 0,0,\ldots,0\rangle$.
\end{definition}

\begin{lemma}\label{thm:linearize}
  Suppose $c$ satisfies every linear \ADSSTS{} in $X$ with range $I$ which is transitive in every color in $J\subseteq I$ where $0\in I\setminus J$.  Then $c$ satisfies every \ADSSTS{} in $X$ with range $I$ which is transitive in every color in $J$.
\end{lemma}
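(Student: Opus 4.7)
The plan is to argue by induction on the number of branching nodes (internal nodes with at least two immediate children) in $T$. In the base case $T$ has no branching, so after relabeling every child as $0$, $T$ is linear and $R$ is itself a linear requirement with range $I$ transitive in $J$; the hypothesis then directly gives that $c$ satisfies $R$.

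For the inductive step, pick an innermost branching node $\sigma\in T$ with immediate children $\tau_1,\ldots,\tau_k$ ($k\ge 2$), each $\tau_i$'s subtree already a chain. I build a new requirement $\tilde R$ obtained from $R$ by replacing the branching at $\sigma$ with a single linear chain
\[ \sigma \to \rho_1 \to \rho_1^{0} \to \rho_2 \to \rho_2^{0} \to \cdots \to \rho_k \to \text{(merged continuations)}, \]
where each $\rho_i$ carries the color $d_{\tau_i}(|\sigma|)$ together with a reindexed copy of $K_{\tau_i}$, and each $\rho_i^{0}$ is a buffer node of color $0$. The buffers impose no transitivity obligation because $0\in I\setminus J$, which is the crucial use of the hypothesis; they let the colors of consecutive $\rho_i$'s differ arbitrarily while $\tilde R$ remains transitive in every color of $J$. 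The simple block statement $K_{\rho_1}$ is taken to be the disjunction $K_{\tau_1}\vee\cdots\vee K_{\tau_k}$ (with suitable variable renaming), so the ``no extension from $\sigma$'' clause of $\Theta^X_{\tilde R;\sigma}$ is precisely ``no extension to any $K_{\tau_i}$'', matching $\Theta^X_{R;\sigma}$. By construction $\tilde R$ has one fewer branching node, range $I$, and is transitive in every color of $J$, so by the inductive hypothesis $c$ satisfies $\tilde R$.

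A case analysis on which node of $\tilde R$ witnesses $\Theta$ then shows $c$ satisfies $R$. If that node is $\sigma$, the disjunction-encoding of $K_{\rho_1}$ converts the ``no extension'' clause into $\Theta^X_{R;\sigma}$. If it is some deeper chain node, the auxiliary $b$-variables decode the proper witness there into a proper witness for some specific $\tau_i$, yielding $\Theta^X_{R;\tau_i}$.

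The delicate step will be preventing a spurious $\Theta$-witness at an intermediate chain node, where a disjunction clause $K_{\tau_j}$ holds at $\rho_i$ with $i\neq j$, so the $A^*$-membership forced by the color $d_{\tau_i}(|\sigma|)$ does not match the $A^*$ needed for a genuine $\tau_j$-witness. I plan to resolve this by interleaving further color-$0$ positions whose ``no extension'' clauses force any such wrong-color configuration to produce a legitimate $\tau_j$-witness elsewhere in the chain. The crucial point is that these auxiliary positions all carry color $0\notin J$ and so impose no transitivity cost, giving room to add as many as needed.
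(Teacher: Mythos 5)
Your base case and the observation that a disjunction of simple block statements is again a simple block statement are fine, but the step you defer as ``delicate'' is the entire content of the lemma, and the sketched repair does not supply it. Suppose the disjunct actually witnessed at position $\rho_1$ is $K_{\tau_j}$ with $j\neq 1$. Two things then go wrong at once. First, the witnesses $\vec a$ found at that position are restrained into $A^*_{d_{\tau_1}(|\sigma|)}(c)$, not into $A^*_{d_{\tau_j}(|\sigma|)}(c)$, so they can never serve as a witness for the node $\tau_j$ of $R$. Second, the only negative clause your chain makes available further down says that no witness to the \emph{next pre-designated} block statement (say $K_{\tau_2}$ at $\rho_2$) appears above the threshold; that is neither ``no extension of $\sigma$ to any child'' (what $\Theta^X_{R;\sigma}(c)$ needs) nor ``no extension of $\tau_j$'' (what $\Theta^X_{R;\tau_j}(c)$ needs). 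Adding more color-$0$ buffer positions cannot repair either defect: color $0$ only waives transitivity obligations; it does not change which $A^*_i(c)$ the already-restrained witnesses lie in, and it does not change which block statement the failed search refers to. The same mismatch recurs at your ``merged continuations'': after the branching is resolved, the remainder of the chain would have to carry the colors and block statements of whichever $\tau_j$ was actually witnessed, which a labelling fixed in advance by position cannot know.

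For comparison, the paper's proof avoids exactly this by refusing to tie a linear position to a branch chosen in advance: the linearized requirement has one position for \emph{every} node of $T$, listed by a monotone enumeration $\pi$; the auxiliary datum at each step names the node $r_i\in T$ currently being witnessed; the color pattern of the linear node $\pi(\sigma)$ is copied from $d_\sigma$ along $\pi$ (and is $0$ elsewhere); and the clause $\pi(r_{i_k})\geq i$ forces a $\Theta$-witness at linear level $i$ to pin the currently witnessed tree node to $\pi^{-1}(i)$ (after possibly passing to level $i+1$), so the color restraints and the negative clause both refer to one and the same node of $T$. Some mechanism of this kind --- letting the data, rather than the fixed position, determine which branch is active, while keeping the restraint colors synchronized with that choice --- is what your construction still lacks, and it is not obtained by interleaving color-$0$ nodes.
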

\begin{proof}
  Let $R=(T,\{K_\sigma\}_{\sigma\in T},\{d_\sigma\}_{\sigma\in T})$ be a \ADSSTS{} with range $I$ which is transitive in every color in $J\subseteq I$.  We define a linear requirement whose satisfaction ensures that we have satisfied $R$.

  Let $n=|T|$ and fix a function $\pi:T\rightarrow [0,n)$ so that $\sigma\sqsubseteq\tau$ implies $\pi(\sigma)\leq\pi(\tau)$.  We let $T'$ consist of sequence of the form $\langle 0,\ldots,0\rangle$ with length $<n$ and we associate the sequence in $T'$ of length $i$ with the natural number $i$.

  When $j<|\sigma|$, we set $d_{\pi(\sigma)}(\pi(\sigma\upharpoonright j))=d_\sigma(j)$, and $d_{\pi(\sigma)}(j)=0$ otherwise.  This ensures that $T'$ will have the same range and satisfy the same transitivity requirements, as needed.

  The auxiliary data will have the form $(r_i,b_i)$ where $r_i$ is either an element of $T$ or $0$.  $(K')^X_i(((r_0,b_0),\ldots,(r_{i-1},b_{i-1})),\vec a_0,\ldots,\vec a_{i-1})$ holds if, letting $i_1,\ldots,i_k<i$ be those values such that $r_{i_j}\neq 0$:
  \begin{itemize}
  \item $k\geq 0$ (i.e. there is at least one such $i$ with $r_{i_j}\neq 0$),
  \item $r_{i_j}$ is a sequence with $|r_{i_j}|=j$,
  \item $r_{i_0}\sqsubsetneq r_{i_1}\sqsubsetneq\cdots\sqsubsetneq r_{i_k}$,
  \item if $0< j<k$ then $i_{j+1}=\pi(r_{i_j})$,
  \item $K^X_{r_{i_k}}((b_{i_0},\ldots,b_{i_k}),\vec a_{i_0},\ldots,\vec a_{i_k})$,
  \item $\pi(r_{i_k})\geq i$.
  \end{itemize}

Suppose $\Theta^X_{R';i}(c)$ holds for some $i$.  Let $\sigma=\pi^{-1}(i)$, and let $(r_0,b_0)$, $\ldots$, $(r_{i-1},b_{i-1})$, $\vec a_0$, $\ldots$, $\vec a_{i-1}$ be the witnessing data.  Let $i_1,\ldots,i_k<i$ be the witnesses; note that if $\pi(r_{i_k})>i$ then we would also satisfy $\Theta^X_{R';i+1}(c)$, so we may assume either $i=0$ (so $\sigma=\langle\rangle$) or $\sigma=r_{i_k}$.  So for any $\tau\sqsubseteq\sigma$, we have $\tau=r_{i_{|\tau|}}$, so $K^X_\tau((b_{i_0},\ldots,b_{i_{|\tau|}}),\vec a_{i_{|\tau|}})$.

On the other hand, if there were some immediate extension $\sigma$ of $r_{i_k}$, a $b$, and a $\vec a$ so that $\Delta^X_{R;\sigma}(c,b_{i_0},\ldots,b_{i_k},b,\vec a_{i_0},\ldots,\vec a_{i_k},\vec a)$ holds then $(\sigma,b),\vec a$ would witness $(K')^X_{i+1}$.  So we have $\Theta^X_{R;r_{i_k}}(c)$.
\end{proof}

\begin{lemma}\label{thm:ads_sts_diat}
  Suppose $c$ satisfies every \ADSSTS{} in $X$ with range $I$ which is transitive in every color in $J\subseteq I$ where $0\in I\setminus J$.  Then whenever $B$ is an $X$-computable (or even $X$-computably enumerable) infinite set, $c\upharpoonright[B]^2\supseteq I$.
\end{lemma}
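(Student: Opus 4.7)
The plan is to argue by contraposition. Suppose there exist some $i\in I$ and some infinite $X$-computably enumerable set $B$ with $c(x,y)\neq i$ for all $x<y$ in $B$. I will exhibit a \ADSSTS{} $R$ whose range lies in $I$ and which is vacuously transitive in every color, such that $c$ fails to satisfy $R$; this will contradict the hypothesis.

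The requirement will be as shallow as possible. Let $T=\{\langle\rangle,\langle 0\rangle\}$ with $\langle 0\rangle$ the unique leaf, and put $d_{\langle 0\rangle}(0)=i$, so $\rng(d_{\langle 0\rangle})=\{i\}\subseteq I$. Fix an $X$-computable enumeration $\{B_b\}_{b\in\mathbb{N}}$ of $B$ and let the simple block statement $K^X_{\langle 0\rangle}(b,\vec a)$ hold iff $\vec a=\langle a\rangle$ for some $a\in B_b$. The transitivity condition holds vacuously for every color because the only proper prefix $\langle\rangle\sqsubsetneq\langle 0\rangle$ has empty domain, so in particular $R$ is transitive in every color of $J$.

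Now verify that $c$ satisfies neither $\Theta^X_{R;\langle 0\rangle}(c)$ nor $\Theta^X_{R;\langle\rangle}(c)$. A witness at the leaf would produce some $a\in B$ lying in $A^*_i(c)$; but then cofinitely many $m$ would satisfy $c(a,m)=i$, whereas the infinitely many $m\in B$ with $m>a$ all have $c(a,m)\neq i$ by assumption, a contradiction. A witness at the root would supply a threshold $t$ past which no $a\in B$ exists (unpacking $\Delta^X_{R;\langle 0\rangle}$), contradicting the infinitude of $B$. Thus $c$ fails to satisfy $R$, completing the contrapositive.

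The argument is mostly bookkeeping once the requirement is correctly set up. The only wrinkle, and the place where I expect to have to be careful, is packaging the $X$-r.e.\ predicate ``$a\in B$'' as an $X$-computable simple block statement via the stage parameter $b$; this is precisely what lets the conclusion cover $X$-r.e.\ sets $B$ and not merely $X$-computable ones.
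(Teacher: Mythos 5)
Your proposal is correct and is essentially the paper's proof: the same depth-one requirement with $K^X_{\langle 0\rangle}(b,a)$ coding stage-$b$ enumeration of $B$ and $d_{\langle 0\rangle}(0)=i$, with the root case ruling out finiteness of $B$ and the leaf case producing an element of $B\cap A^*_i(c)$. Phrasing it as a contrapositive rather than directly extracting a pair colored $i$ is only a cosmetic difference, and your remark about the transitivity condition holding vacuously and the stage parameter handling $X$-r.e.\ sets matches what the paper leaves implicit.
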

\begin{proof}
For each $e$ and each $i\in I$, we show that if $W_e$ is infinite then there is an $x\in W_e\cap A^*_i(c)$; then since $W_e$ is infinite, there must be a big enough $x\in W_e$ with $c(x,y)=i$.

We take $T$ to contain a single branch of length $1$, $\langle 0\rangle$.  We take $K_{\langle 0\rangle}^X(b,x)$ to hold if $x\in W^X_{e,b}$.  We set $d_{\langle 0\rangle}(0)=i$.  If $\Theta_{R;\langle\rangle}(c)$ holds then there must be some $t$ so that there do not exist $b$ and $x>t$ so that $x\in W^X_{e,b}$; but this implies that $W_e$ is finite.  Otherwise $\Theta_{R;\langle 0\rangle}(c)$ holds, in which case we find $b_0,x_0$ so that $x_0\in W^X_{e,b_0}$ and $x_0\in A^*_i(c)$ as needed.
\end{proof}

Before going on, we attempt to motivate our definition of a requirement.  Our discussion will be most meaningful to someone already familiar with the construction in \cite{LST:MR3125903}.  For purposes of this discussion, we consider a separation easier than any of the others considered in this paper: separating $\mathbf{ADS}$ from $\mathbf{D}^2_2$; the latter is $\mathbf{STS}(2)$ restricted to the colors $\{0,1\}$, where a solution must omit one of these colors (and therefore be homogeneous in the other color).

We imagine that we are simultaneously constructing our instance $c$ of $\mathbf{D}^2_2$ and our solution to some instance $\prec$ of $\mathbf{ADS}$, and we wish to make a single step of our construction, which means arranging progress towards either a $\prec$-increasing sequence $\Lambda^+$ so that $\Phi_{e_0}^{\Lambda^+}$ fails to compute a solution to $c$ or a $\prec$-decreasing sequence $\Lambda^-$ so that $\Phi_{e_1}^{\Lambda^-}$ fails to compute a solution to $c$.  The key idea of \cite{LST:MR3125903} was to look for both a $\prec$-increasing sequence $p$ with endpoint $p^+$ and a $\prec$-decreasing sequence $q$ with endpoint $q^+$ so that:
\begin{itemize}
\item $p^+\preceq q^+$,
\item there are two fresh elements $a_0,a_1$ so that $\Phi_{e_0}^p$ converges and equals $1$ on both $a_0$ and $a_1$,
\item there are two fresh elements $b_0,b_1$ so that $\Phi_{e_1}^q$ converges and equals $1$ on both $b_0$ and $b_1$.
\end{itemize}
If this happens, we could restrain $c$ so that we will have $a_0,b_0\in A^*_0(c)$ and $a_1,b_1\in A^*_1(c)$.  Then, since $p^+\preceq q^+$, \emph{either} there are infinitely many $x$ with $p^+\prec x$ (and therefore $p$ is a reasonable beginning of an increasing sequence), or there are infinitely many $x$ with $x\prec q^+$ (and therefore $q$ is a reasonable beginning of a decreasing sequence).  Crucially, if we fail to find such a pair $p,q$, then one can arrange for either $\Phi_{e_0}^{\Lambda^+}$ or $\Phi_{e_1}^{\Lambda^-}$ to be finite.

The difficult point is that one needs to ensure $b_0\neq a_1$ and $a_0\neq b_1$ so that we can place both of the needed restraints separately.  

This is the source of the conflict when one attempts to strengthen the separation by including solutions to $\mathbf{WKL}$.  One ends up working not with a single attempt at building $\Lambda^+$ and $\Lambda^-$, but with a finitely branching tree of attempts.  The problem is that even if one finds such pairs $p,q$ in each branch, there may be incompatibilities across different branches --- $a_0$ in one branch may be $b_1$ in another.

What one would prefer is to construct our witnesses in stages.  First we would look for a pair $p_0,q_0$ with $p_0^+\prec q_0^+$ and only the witnesses $a_0,b_0$.  Then we could look for extensions $p_0\sqsubseteq p_1$ and $q_0\sqsubseteq q_1$ with $p_1^+\preceq q_1^+$, and demand that the witnesses $a_1,b_1$ be above some threshold based on the first stage (in particular, larger than $\max\{a_0,b_0\}$).  Such a construction would be compatible with a finitely branching tree: we could wait for the pairs $p_0,q_0$ to appear in every branch.  The witnesses $a_0,b_0$ taken over all branches would form a ``block'' which is all restrained in the same way (say, all put into $A^*_0(c)$).  Only then would we look for the extensions $p_1,q_1$ in all branches, requiring that the witnesses $a_1,b_1$ all be larger than any element of the $0$ block.

The difficulty is that we need the following property: suppose we find our witnesses $p_0,q_0$, but then are unable to extend to $p_1,q_1$.  Then this must be a situation in which we can succeed (presumably by forcing one of $\Phi_{e_0}^{\Lambda^+},\Phi_{e_1}^{\Lambda^-}$ to be finite), even if a different choice of $p_0,q_0$ could have been extended to a $p_1,q_1$.

Let us state this more explicitly, since it is the driving force behind our definition above.  When we wish to satisfy some requirement, we will proceed in stages in which we look for auxiliary data (like $p_0,q_0$) and witnesses (like $a_0,b_0$).  When we find the data and witnesses, we may ``restrain'' the witnesses (by placing them in some $A_i^*(c)$) and then begin looking for the next stage of the construction.  However:
\begin{itemize}
\item during each stage, all witnesses found at a given earlier stage must be restrained the same way, and
\item at each stage, failing to find the data and witnesses to the next stage must be sufficient to ensure our requirement.
\end{itemize}
This is essentially what our definition of satisfaction of a \ADSSTS{} says.

In fact, the two-stage construction we alluded to two paragraphs ago fails: having found the witnesses $p_0,q_0$, failing to find $p_1,q_1$ is not helpful.  It could be that, say, $p_0^+$ will actually turn out to be quite large in $\prec$, and no further elements will appear above $p_0^+$, making the extension $p_1$ impossible to find, and also meaning that our inability to find it gives us no information about how to restrain $\Lambda^+$ to make $\Phi_{e_0}^{\Lambda^+}$ finite.

In Figure \ref{fig:ADS} we lay out a multi-stage process which is substantially more complicated (the version there involves as many as six consecutive steps)  For example, the next stage after finding $p_0,q_0$ is to look for \emph{either} a pair $p_1,q'_0$ with $p_0\sqsubseteq p_1$, $p_1^+\preceq (q'_0)^+$, $p_1$ finds a witness $a_1$, and $q'_0$ finds a new witness $b'_0$, \emph{or} a pair $p'_0,q_1$ with $q_0\sqsubseteq q_1$, $(p'_0)^+\preceq q_1^+$, $q_1$ finds a witness $b_1$, and $p'_0$ finds a new witness $a'_0$.

\begin{figure}
\begin{tikzpicture}[scale=0.6, font=\tiny]
\begin{scope}[shift={(-1,0)}]
  \draw (-1.5,-.5)--(1.45,-.5)--(1.45,.7)--(-1.5,.7)--cycle;
  \draw[->](-1,0)--(-.1,.5); \draw[->](1,0)--(.1,.5);
  \node at (-.5,0) {$p_1$}; \node at (.5,0) {$q_1$};
\end{scope}
\begin{scope}[shift={(2.5,-1.3)}]
  \draw (-1.7,-.5)--(3,-.5)--(3,.7)--(-1.7,.7)--cycle;
  \draw[->] (-1,0)--(0,.25); \draw[dashed,->] (0,.25)--(.9,.5);
  \draw[dashed,->](2,0)--(1.1,.5);
  \node at (0,0) {$p_1$}; \node at (1.5,0) {$q_1$};
\end{scope}
\begin{scope}[shift={(2.5,-3)}]
  \draw (-1.7,-.5)--(3,-.5)--(3,.7)--(-1.7,.7)--cycle;
  \draw[dotted,->](-1,0)--(-.1,.5); 
  \draw[dashed,->](2,0)--(1,.25);\draw[dotted,->](1,.25)--(.1,.5);
  \node at (-.5,0) {$p_2$}; \node at (.75,0) {$q_2$};
\end{scope}
\begin{scope}[shift={(-5,-2.6)}]
  \draw (-1.7,-.5)--(3,-.5)--(3,.7)--(-1.7,.7)--cycle;
  \draw[dashed,->](-1,0)--(-.1,.5); 
  \draw[->](2,0)--(1,.25);\draw[dashed,->](1,.25)--(.1,.5);
  \node at (-.5,0) {$p_2$}; \node at (.5,0) {$q_2$};
\end{scope}
\begin{scope}[shift={(-6,-4.6)}]
  \draw (-1.7,-.5)--(5.2,-.5)--(5.2,.7)--(-1.7,.7)--cycle;
  \draw[dashed, ->](-1,0)--(-.1,.5); 
  \draw[->](2,0)--(1,.25);\draw[dashed, ->](1,.25)--(.1,.5);
  \draw[dotted, ->] (2.75,0)--(3.65,.5);\draw[dotted,->] (4.75,0)--(3.85,.5);
  \node at (-.5,0) {$p_2$}; \node at (1,0) {$q_2$};
  \node at (3.25,0) {$p_1$}; \node at (4.25,0) {$q_1$};
\end{scope}
\begin{scope}[shift={(-7,-6.5)}]
  \draw (-2.7,-.5)--(3,-.5)--(3,.7)--(-2.7,.7)--cycle;
  \draw[dashed,->](-2,0)--(-1,.25); \draw[decorate, decoration={snake,amplitude=.5mm},->](-1,.25)--(-.1,.5);
  \draw[dotted,->](2,0)--(1,.25);\draw[decorate, decoration={snake,amplitude=.5mm},->](1,.25)--(.1,.5);
  \node at (-.75,0) {$p_2$}; \node at (.75,0) {$q_2$};
\end{scope}
\begin{scope}[shift={(0,-6.5)}]
  \draw (-1.7,-.5)--(6,-.5)--(6,.7)--(-1.7,.7)--cycle;
  \draw[dashed,->](-1,0)--(-.1,.5); 
  \draw[->](2,0)--(1,.25);\draw[dashed,->](1,.25)--(.1,.5);
  \draw[dotted,->] (2.75,0)--(3.75,.25);\draw[decorate, decoration={snake,amplitude=.5mm},->] (3.75,.25)--(4.65,.5);
  \draw[decorate, decoration={snake,amplitude=.5mm},->](5.75,0)--(4.85,.5);
  \node at (-.5,0) {$p_2$}; \node at (1,0) {$q_2$};
  \node at (3.75,0) {$p_1$}; \node at (5,0) {$q_1$};
\end{scope}
\begin{scope}[shift={(2,-8.5)}]
  \draw (-2.7,-.5)--(3,-.5)--(3,.7)--(-2.7,.7)--cycle;
  \draw[dashed,->](-2,0)--(-1,.25); \draw[decorate, decoration={zigzag,segment length=1.5mm,amplitude=.5mm},->](-1,.25)--(-.1,.5);
  \draw[decorate, decoration={snake,amplitude=.5mm},->](2,0)--(1,.25);\draw[decorate, decoration={zigzag,segment length=1.5mm,amplitude=.5mm},->](1,.25)--(.1,.5);
  \node at (-.75,0) {$p_2$}; \node at (.75,0) {$q_2$};
\end{scope}
\draw(-1,-.5)--(-1,-1.3);
\draw[->](-1,-1.3)--(.8,-1.3);
\draw(-1,-1.3)--(-4.3,-1.3);
\draw[->](-4.3,-1.3)--(-4.3,-1.9);
\draw[->](3.2,-1.8)--(3.2,-2.3);
\draw[->](-4.3,-3.1)--(-4.3,-3.9);
\draw[->](3.2, -3.5)--(3.2,-4.2);
\node at (3.2,-4.3) {$\cdots$};
\draw(-4.45,-5.1)--(-4.45,-5.4);
\draw(-4.45,-5.4)--(2.3,-5.4);
\draw[->](2.3,-5.4)--(2.3,-5.8);
\draw(-4.45,-5.4)--(-7.1,-5.4);
\draw[->](-7.1,-5.4)--(-7.1,-5.8);
\draw[->](2.3,-7)--(2.3,-7.8);
\end{tikzpicture}
\caption{}\label{fig:ADS}
\end{figure}

\subsection{Solving $\mathbf{ADS}$}

As a warm up to dealing with $\mathbf{CAC}$ (and a preview of Lemma \ref{thm:ads_prod_ads}), we first show that we can solve instances of $\mathbf{ADS}$ while preserving \ADSSTS{}s.

As in \cite{LST:MR3125903}, it is convenient to restrict to a certain kind of linear ordering.
\begin{definition}
  A linear ordering $(\mathbb{N},\prec)$ is \emph{stable-ish} if there is a non-empty initial segment $V$ so that $V$ has no maximum under $\prec$ and $\mathbb{N}\setminus V$ has no minimum under $\prec$.
\end{definition}

\begin{lemma}[\cite{LST:MR3125903}]
  If $(\mathbb{N},\prec)$ is not stable-ish then there is an infinite monotone $\prec$-sequence computable from $\prec$.
\end{lemma}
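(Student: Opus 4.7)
The plan is to split into cases according to whether $\prec$ has both endpoints. The easy cases are those in which an endpoint is missing: if $\prec$ has no $\prec$-maximum, then the greedy construction $f(0)=0$, $f(k+1) = $ the least natural $n>f(k)$ with $f(k)\prec n$, produces an infinite $\prec$-increasing sequence, since absence of a $\prec$-maximum implies every element has infinitely many $\prec$-successors, so the search always succeeds. The case of no $\prec$-minimum is symmetric, yielding a $\prec$-decreasing sequence.

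The substantive case is when $\prec$ has both a $\prec$-minimum $m^*$ and a $\prec$-maximum $M^*$. Here the hypothesis that $\prec$ is not stable-ish is used decisively: every non-empty proper $\prec$-initial segment $V\subsetneq\mathbb{N}$ either has a $\prec$-maximum or its complement has a $\prec$-minimum, so there are no ``unrealized cuts''. My strategy is to iteratively peel $\prec$-maxima, setting $M_1=M^*$ and $M_{k+1}$ to be the $\prec$-maximum of $\mathbb{N}\setminus\{M_1,\ldots,M_k\}$ when defined. Either this peeling continues indefinitely, yielding an infinite $\prec$-decreasing sequence of $M_k$'s, or it terminates at some finite stage $K$ with $\mathbb{N}\setminus\{M_1,\ldots,M_K\}$ having no $\prec$-maximum, in which case greedy $\prec$-increasing restricted to that remainder produces the desired increasing sequence.

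The hard part will be making this iterative procedure $\prec$-computable, since identifying an individual $\prec$-maximum is naively only $\Pi^0_1$ in $\prec$. The not-stable-ish hypothesis is precisely the structural tool needed to overcome this: because every cut is realized, a suitable dovetailing of approximate peeling with a concurrent search for greedy extensions of an increasing chain forces at least one of the two processes to stabilize at a finite stage. Making this dovetailing precise, and verifying that the absence of unrealized cuts is exactly what prevents both processes from diverging, is the technical crux of the argument and where I expect essentially all of the real work to lie.
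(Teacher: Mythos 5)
The paper does not actually prove this lemma (it is quoted from \cite{LST}), so your proposal has to stand on its own, and as written it does not: you have correctly dispatched the cases where $\prec$ lacks a $\prec$-maximum or $\prec$-minimum (and your observation that ``no maximum'' forces every element to have infinitely many successors, so the greedy search always halts, is right), but in the main case you explicitly leave ``essentially all of the real work'' to an unspecified dovetailing, and the dichotomy you are aiming at is the wrong one. If the peeling of maxima never terminates, the sequence $M_1\succ M_2\succ\cdots$ of peeled maxima is only $\Delta^0_2$ in $\prec$ (each maximum is a $\Pi^0_1(\prec)$ object and you must iterate), and no dovetailing can repair this, because a $\prec$-computable descending sequence need not exist at all in that case: one can build a computable copy of $\omega+1+\omega^*$ (which has both endpoints, is not stable-ish, and on which your peeling runs forever through the $\omega^*$-block) with no computable infinite descending sequence, by the same permitting/Tennenbaum-style construction that produces such a copy of $\omega+\omega^*$. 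In that order your two concurrent processes both fail: unrestricted greedy ascent gets stuck once it enters the top block, and the peeling markers each stabilize, so neither ``process stabilizes at a finite stage'' in a way you can detect computably. The monotone sequence you must produce there is an \emph{ascending} one, and it cannot be found by your scheme.

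The missing idea is that ``computable from $\prec$'' permits finitely much non-uniform information, and that not-stable-ish hands you exactly the right finite parameter. Concretely: if some element $c$ has $\{y: y\prec c\}$ non-empty with no $\prec$-maximum, then greedy search \emph{strictly below the fixed parameter $c$} always succeeds and yields a $\prec$-computable ascending sequence; dually if some $c$ has an upper segment with no $\prec$-minimum. If neither happens and both endpoints exist, then every non-maximal element has an immediate successor and every non-minimal one an immediate predecessor, so the order is infinite, discrete, with endpoints; but then the initial segment $A$ of elements with finitely many predecessors is non-empty with no maximum, and its complement can have no minimum (such a minimum's lower segment would be $A$, contradicting discreteness), so $(A,\mathbb{N}\setminus A)$ witnesses that $\prec$ \emph{is} stable-ish, a contradiction. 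This case split, rather than iterated peeling, is what the hypothesis is really for; without it (or something equivalent) your outline has a genuine gap precisely at the step you flagged as the crux.
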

Note that there is no requirement that the set $V$ be computable from $\prec$.

\begin{lemma}\label{thm:ads_sts_ads}
  Suppose $c$ satisfies every \ADSSTS{} in $X$ and $\prec$ is a stable-ish $X$-computable linear ordering.  Then there is a monotone sequence $\Lambda$ so that $c$ satisfies every \ADSSTS{} in $X\oplus\Lambda$.
\end{lemma}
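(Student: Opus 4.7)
The plan is to adapt the LST pairing construction \cite{LST}, replacing its cone-avoidance preservation with preservation of \ADSSTS{}-satisfaction. Simultaneously build two finite sequences: $\Lambda^+$, an initial segment of a would-be $\prec$-increasing sequence destined to live in $V$, and $\Lambda^-$, an initial segment of a would-be $\prec$-decreasing sequence destined to live in $\mathbb{N}\setminus V$. Since $V$ has no maximum and $\mathbb{N}\setminus V$ has no minimum, at least one of these sides can be extended forever; we eventually commit to that side and let $\Lambda$ be its infinite extension. What must be shown at each finite stage, for each \ADSSTS{} $R=(T,\{K_\sigma\}_{\sigma\in T},\{d_\sigma\}_{\sigma\in T})$ expressible in $X\oplus\Lambda$, is that the current finite sequences can be extended so that the winning side eventually satisfies $R$.

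The core device is to construct from $R$ an auxiliary \ADSSTS{} $R'$ in $X$ alone, whose satisfaction by $c$ automatically produces the needed extension. Following the multi-step process sketched in the motivational discussion and Figure~\ref{fig:ADS}, nodes of $T'$ encode pairs $(\sigma^+,\sigma^-)$ of nodes of $T$, representing progress made for the two sides in parallel; branches of $T'$ record the choice of which side to advance at each step. At each such node the simple block statement $K'_{\sigma'}$ searches for a $\prec$-increasing $p\sqsupseteq \Lambda^+_0$ and a $\prec$-decreasing $q\sqsupseteq \Lambda^-_0$ with $p^+\preceq q^+$, together with auxiliary data and witnesses that jointly realize the block statements $K_{\sigma^+}^{X\oplus p}$ and $K_{\sigma^-}^{X\oplus q}$; the color function $d'_{\sigma'}$ concatenates $d_{\sigma^+}$ and $d_{\sigma^-}$, and because the colors are read from $R$ in a fixed way the range and transitivity constraints of $R'$ are inherited from those of $R$.

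By hypothesis $c$ satisfies $R'$ in $X$, so some $\sigma'\in T'$ witnesses $\Theta^X_{R';\sigma'}(c)$; the witnessing data yields a pair $(p^*,q^*)$ with $(p^*)^+\preceq (q^*)^+$, and since $V$ is an initial segment either $(p^*)^+\in V$ or $(q^*)^+\in\mathbb{N}\setminus V$. In the winning direction I install $p^*$ or $q^*$ as the new finite sequence, iterate over an enumeration of potential \ADSSTS{}s in $X\oplus\Lambda$, and unpack the $R'$-witness to produce a $\Theta$-witness for $R$ against $X\oplus\Lambda$ on the winning side. The main obstacle, precisely the one flagged in the motivational discussion, is designing the branching of $T'$ so that the non-leaf clause of $\Theta$ --- the case in which no further extension of the pair $(p,q)$ can be found --- still delivers a genuine satisfaction of $R$ on whichever side is going to win. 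A naive two-stage setup fails because inability to extend the pair could reflect only that $p^+$ happens to be near the top of $V$ rather than any intrinsic information about the functionals being diagonalized; the multi-step design in Figure~\ref{fig:ADS} avoids this by interleaving ``refresh'' steps that look for a new $q$ against the old $p$ (and vice versa), and verifying that these interleavings remain compatible with the transitivity restrictions on $d$ inherited from $R$ will be the delicate piece of the argument.
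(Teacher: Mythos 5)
There is a genuine gap, and it is in the overall skeleton before one even gets to the hard combinatorics. You propose to handle one \ADSSTS{} $R$ at a time and to force it ``on the winning side,'' having ``eventually committed'' to whichever side can be extended forever. But under the stable-ish hypothesis \emph{both} sides can always be extended ($V$ has no $\prec$-maximum and $\mathbb{N}\setminus V$ no $\prec$-minimum), so extendability never selects a side, and no finite stage of the construction can identify a winning side to commit to. What the forcing actually delivers for a single requirement is only a disjunction: the extension forces $R$ on the increasing side \emph{or} on the decreasing side, and you cannot choose which. If you process requirements one at a time this way, different requirements get secured on different sides, and neither $\Lambda^+$ nor $\Lambda^-$ need satisfy all of them. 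The paper's proof avoids this by quantifying over \emph{pairs} of requirements: for every pair $(R^+,R^-)$ it finds an extension forcing $R^+$ on the increasing side or $R^-$ on the decreasing side, builds both $\Lambda^+$ and $\Lambda^-$ to be infinite, and concludes non-uniformly at the end that if $\Lambda^+$ misses some $R^+$, then (running over all $R^-$ paired with that fixed $R^+$) $\Lambda^-$ satisfies every requirement. Correspondingly, the auxiliary requirement is built from \emph{two} requirements $R^+,R^-$ with trees $T^+,T^-$, not, as in your sketch, from pairs of nodes $(\sigma^+,\sigma^-)$ of the single tree of $R$.

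The second problem is that the technical core is deferred rather than supplied. The content of Lemma \ref{thm:ads_sts_ads} is precisely the design of the auxiliary requirement: nodes carrying several split pairs (pairs $(p',q')$ with a \emph{common} endpoint, which is an $X$-computable condition, so that one of $(p',q)$, $(p,q')$ is automatically a condition) whose endpoints are $\prec$-ordered, stage-matching functions $\pi,\rho$ recording in which block each witness was found, a finiteness argument for the resulting tree, and---crucially---the verification that $\Theta^X_{R;\upsilon}(c)$ at a \emph{non-leaf} node $\upsilon$ (the ``no further extension above the threshold $t$'' outcome) can be converted, using the non-computable knowledge of which endpoints lie in $V$, into a condition forcing one of the two requirements. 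Your proposal acknowledges that the naive two-stage attempt fails and points to Figure \ref{fig:ADS}, but does not give the construction or the extraction argument, which is exactly where the difficulty lives. Finally, the ``delicate piece'' you flag---compatibility with transitivity restrictions on $d$ inherited from $R$---is not an issue for this lemma at all: general \ADSSTS{}s here carry no transitivity constraints (those only matter in the later $\mathbf{SProdWQO}$ and $\mathbf{SCAC}$ arguments), and the values of $d$ on unused coordinates may be chosen arbitrarily.
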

\begin{proof}
Let $V$ witness that $\prec$ is stable-ish.  When $p$ is a monotone sequence, we write $p^+$ for the final element of $p$.

We will force with \emph{conditions}, which are pairs $(p,q)$ where $p$ is a $\prec$-increasing sequence in $\prec$, $q$ is a $\prec$-decreasing sequence in $\prec$, $p^+\in V$, and $q^+\not\in V$.  (This of course implies that $p^+\prec q^+$.  Note that being a condition is generally not $X$-computable, since $V$ need not be $X$-computable.)  We say a condition $(p',q')$ \emph{extends} $(p,q)$ if $p\sqsubseteq p'$ and $q\sqsubseteq q'$.  We say $(p,q)$ \emph{forces $R$ on the increasing side} if whenever $\Lambda$ is an infinite, $\prec$-increasing sequence with $p\sqsubseteq \Lambda$ and $\Lambda\subseteq V$, $c$ satisfies $R$ in $X\oplus\Lambda$.  Similarly, we say $(p,q)$ \emph{forces $R$ on the decreasing side} if whenever $\Lambda$ is an infinite, $\prec$-decreasing sequence with $q\sqsubseteq \Lambda$ and $\Lambda\subseteq V$, $c$ satisfies $R$ in $X\oplus\Lambda$.

It suffices to show:
\begin{quote}
  $(\ast)$ Suppose $R^+$ and $R^-$ are requirements and $(p,q)$ is a condition.  Then there is a condition $(p',q')$ extending $(p,q)$ which either forces $R^+$ on the increasing side or $R^-$ on the decreasing side.
\end{quote}
For suppose we have shown this.  Then we fix a list of requirements $R^+_i,R^-_i$ so that for any pair of requirements $R^+,R^-$, there is an $i$ with $R^+_i=R^+, R^-_i=R^-$.  We construct a sequence $(\langle\rangle,\langle\rangle)=(p_0,q_0), (p_1,q_1),\ldots$ with $(p_{i+1},q_{i+1})$ extends $(p_i,q_i)$, $(p_{2i+1},q_{2i+1})$ either forces $R^+_i$ on the increasing side or $R^-_i$ on the decreasing side, $p_{2i}$ has length $\geq i$, and $q_{2i}$ has length $\geq i$.  Let $\Lambda^+=\bigcup p_i$ and $\Lambda^-=\bigcup q_i$.  If $c$ does not satisfy every \ADSSTS{} in $X\oplus\Lambda^+$ then there is some $R^+$ which it fails to satisfy, and therefore for each $R^-$ there was an $i$ with $R^+_i=R^+, R^-_i=R^-$, and therefore since $(p_{2i+1},q_{2i+1})$ must not have forced $R^+$ on the increasing side, $(p_{2i+1},q_{2i+1})$ forced $R^-$ on the decreasing side, and therefore $\Lambda^-$ satisfies every \ADSSTS{} in $X\oplus\Lambda^-$.

We now show $(\ast)$.  Let a condition $(p,q)$ and requirements $R^+, R^-$ be given.  Let $R^+=(T^+,\{L_{\sigma}\}_{\sigma\in T^+},\{d^+_\sigma\}_{\sigma\in T^+})$ and $R^-=(T^-,\{M_\tau\}_{\tau\in T^-},\{d^-_\tau\}_{\tau\in T^-})$ be given.   We will describe a requirement $R=(T,\{K_\upsilon\}_{\upsilon\in T},\{d_\upsilon\}_{\upsilon\in T})$.  

For bookkeeping reasons, it is convenient to assume that for any $\sigma\in T^+$, $d^+_\sigma(|\sigma|-1)=0$; this is easily arranged: if $\sigma\in T^+$ violates this, modify $R^+$ as follows: insert a child $\sigma^\frown\langle 0\rangle$ so $L_{\sigma^\frown\langle 0\rangle}$ always holds, and wait for this dummy node to set $d^+_{\sigma^\frown\langle 0\rangle}=d^+_\sigma\cup\{(|\sigma|,0)\}$, then take all children $\sigma^\frown\gamma$ and move them to $\sigma^\frown\langle 0\rangle^\frown\gamma$.  Symmetrically, we make the same assumption for $R^-$.

A \emph{split pair} is a pair $(p',q')$ so that $p\sqsubseteq p'$, $q\sqsubseteq q'$, and $(p')^+=(q')^+$.  Note that a split pair need not be a condition, but being a split pair \emph{is} $X$-computable.  Crucially, when $(p',q')$ is a split pair, one of $(p,q')$ and $(p',q)$ must be a condition (depending on whether the common endpoint belongs to $V$).

Let $r=\max\{|\tau|\mid \tau\in T^-\}$.  Each node $\upsilon\in T$ will describe a situation involving a sequence of split pairs
\[(p_r,q_r), (p_{r-1},q_{r-1}), \ldots, (p_1,q_1), (p_0,q_0)\]
with the endpoints in order, so that $p_r^+=q_r^+\prec p_{r-1}^+=q_{r-1}^+\prec\cdots p_1^+=q_1^+$.

More formally: to each non-empty $\upsilon\in T$, we associate, for each $j\leq r$, sequences $\sigma^\upsilon_j\in T^+$ and $\tau^\upsilon_j\in T^-$.  We require that if one of these sequences is empty then the other is as well (in which case the corresponding split pair is understood to be an empty sequence).

The expectation (encoded below in the definition of $K^X_\upsilon$) is that $p_j$ is a witness to $\sigma^\upsilon_j$ and $q_j$ is a witness to $\tau^\upsilon_j$.  We will also require that $|\tau^\upsilon_j|\in\{0,j\}$ for each $j$.

Suppose we have a sequence of split pairs like this and suppose that $p_{j+1}^+\in V$ but $q_j^+\not\in V$.  (Taken literally there may not be such a $j$, but if we correctly handle the case where the $p_{j+1}^+$ or $q_j^+$ do not exist because $p_{j+1}$ or $q_j$ is the empty sequence, we will be able to ensure there is such a $j$.)  We can look for a split pair of extensions: $p_{j+1}\sqsubseteq p_*$ and $q_j\sqsubseteq q_*$ with $p_*^+=q_*^+$, $p_*$ witnessing an extension of $\sigma^\upsilon_{j+1}$ and $q_*$ witnessing an extension of $\tau^\upsilon_j$.  If we cannot find one of these then one of $p_{j+1}$ or $q_j$ is the desired extension to our condition.

If we do find such a split pair $p_*,q_*$, we would have a new sequence of split pairs
\[(p_r,q_r),\ldots,(p_{j+2},q_{j+2}),(p_*,q_*),(p_{j-1},q_{j-1}),\ldots,(p_1,q_1),(p_0,q_0).\]
In this new sequence, we no longer have a split pair indexed by $j$, but $q_{*}$ now witnesses a branch of length $j+1$.

This will be the way we extend nodes: for each node $\upsilon\in T$, we will have one child for each $j,\sigma_*,\tau_*$ where $\sigma_*$ extends $\sigma^\upsilon_{j+1}$ and $\tau_*$ extends $\tau^\upsilon_j$.

Following this rule, we can see that branches in $T$ must be finite   Let $s=\max\{|\sigma|\mid\sigma\in T^+\}$ and assign to each $\upsilon$ the sequence of numbers
\[(s-|\sigma^\upsilon_r|,s-|\sigma^\upsilon_{r-1}|,\ldots,s-|\sigma^\upsilon_1|).\]
(We ignore $|\sigma^\upsilon_0|$ since this value is always $0$.)  An extension of $\upsilon$ increments some $|\sigma^\upsilon_{j+1}|$ by $1$ and resets $|\sigma^\upsilon_j|$ to $0$.  In particular, the associated sequence of numbers always decreases in the lexicographic ordering, so each branch of $T$ must terminate.

Note that when we extend a split pair in this construction, we extend $p_{j+1}$ and $q_j$ and discard $q_{j+1}$ and $p_j$.  In particular, the only time sequences $p_j, q_{j'}$ share a block of witnesses is that each $p_j$ and $q_j$ share their final block of witnesses.  This does not cause any problems\footnote{Here we are using the fact that we do not have any transitivity restrictions, so there is no intereference between blocks of witnesses as long as they are distinct.} because we have required that $d_{\sigma^\upsilon_j}^+(|\sigma^\upsilon_j|-1)=0=d_{\tau^\upsilon_j}^-(|\tau^\upsilon_j|-1)$, so the two requirements agree on what to do with the shared block.  Other than that, each block of witnesses is associated with at most one of the sequences $p_j$ or $q_j$, and so $d_\upsilon$ should just copy the corresponding value of $d^+_{\sigma^\upsilon_j}$ or $d^-_{\tau^\upsilon_j}$.

More precisely, for each $\upsilon$ and each $j$ we will have functions $\pi_j^\upsilon:\dom(\sigma^\upsilon_j)\rightarrow\dom(\upsilon)$ and $\rho^\upsilon_j:\dom(\tau^\upsilon_j)\rightarrow\dom(\upsilon)$.  $\pi_j^\upsilon(i)$ will tell us at which stage in the construction of $\upsilon$ the sequence $p_j$ was extended to get length $i+1$, and $\tau_j^\upsilon(i)$ will tell us at which stage the sequence $q_j$ was extended to get length $i+1$.

We can now gather up the data we need for each $\upsilon\in T$.  To each $\upsilon\in T$ we associate:
\begin{itemize}
\item for each $j\leq r$, sequences $\sigma^\upsilon_j\in T^+$ and $\tau^\upsilon_j\in T^-$ such that:
  \begin{itemize}
  \item $|\tau^\upsilon_j|\in\{0,j\}$ and
  \item $|\sigma^\upsilon_j|=0$ if and only if $|\tau^\upsilon_j|=0$,
  \end{itemize}
\item functions $\pi^\upsilon_j:\dom(\sigma^\upsilon_j)\rightarrow\dom(\upsilon)$ and $\rho^\upsilon_j:\dom(\tau^\upsilon_j)\rightarrow\dom(\upsilon)$ such that:
  \begin{itemize}
  \item if $\pi^\upsilon_j(i)=\rho^\upsilon_{j'}(i')$ then $j=j'$, $i=|\sigma^\upsilon_j|-1$, adn $i'=|\tau^\upsilon_j|-1$,
  \item if $\pi^\upsilon_j(i)=\pi^\upsilon_{j'}(i')$ then $j=j'$ and $i=i'$,
  \item if $\rho^\upsilon_j(i)=\rho^\upsilon_{j'}(i')$ then $j=j'$ and $i=i'$.
  \end{itemize}
\end{itemize}

The base case is $\sigma^{\langle\rangle}_j=\tau^{\langle\rangle}_j=\langle\rangle$ for all $j\leq r$ and therefore $\sigma^\upsilon_j=\tau^\upsilon_j$ are the empty function.

Suppose we have defined these values for $\upsilon\in T$.  Then whenever $j_0< r$, $\sigma_*\in T^+$ is an immediate extension of $\sigma^\upsilon_{j_0+1}$, and $\tau_*\in T^-$ is an immediate extension of $\tau^\upsilon_{j_0}$, there is a node $\upsilon'=\upsilon^\frown\langle (j_0,\sigma_*,\tau_*)\rangle\in T$ with:
\begin{itemize}
\item for $j\not\in\{j_0,j_0+1\}$, $\sigma^{\upsilon'}_j=\sigma^\upsilon_j$, $\tau^{\upsilon'}_j=\tau^\upsilon_j$, $\pi^{\upsilon'}_j=\pi^\upsilon_j$, and $\rho^{\upsilon'}_j=\rho^\upsilon_j$,
\item $\sigma^{\upsilon'}_{j_0}=\tau^{\upsilon'}_{j_0}=\langle\rangle$,
\item $\pi^{\upsilon'}_{j_0}$ and $\rho^{\upsilon'}_{j_0}$ are the empty function,
\item $\sigma^{\upsilon'}_{j_0+1}=\sigma_*$ and $\pi^{\upsilon'}_{j_0+1}=\pi^\upsilon_{j_0+1}\cup\{(|\sigma_*|-1,|\upsilon'|-1)\}$,
\item $\tau^{\upsilon'}_{j_0+1}=\tau_*$ and $\rho^{\upsilon'}_{j_0+1}=\rho^\upsilon_{j_0}\cup\{(|\tau_*|-1,|\upsilon'|-1)\}$.
\end{itemize}
Note the definition of $\pi^{\upsilon'}_{j_0+1}$ and $\rho^{\upsilon'}_{j_0+1}$: these are noting that $|\upsilon|'-1$ is the stage at which we extended $\sigma^{\upsilon'}_{j_0+1}$ and $\tau^{\upsilon'}_{j_0+1}$.

The definition forces us to take:
\begin{itemize}
\item $d_\upsilon(\pi^\upsilon_j(i))=d^+_{\sigma^\upsilon_j}(i)$, and
\item $d_\upsilon(\rho^\upsilon_j(i))=d^-_{\tau^\upsilon_j}(i)$.
\end{itemize}
The almost disjointness of the ranges of the various functions $\pi^\upsilon_j$ and $\rho^\upsilon_j$ ensures that we can satisfy this obligation.  This may not fully define $d_\upsilon$, and we may take other values arbitrarily (these correspond to blocks of witnesses no longer in use, and which are therefore irrelevant).

We next need to specify the block statements $K^X_\upsilon((b_0,\ldots,b_{|\upsilon|-1}),\vec a)$.  Each datum $b_i$ will have the form $(e_i,p_i,f_i,q_i)$, where $p_i$ and $q_i$ are the split pair found at stage $i$, and $e_i,f_i$ are additional data needed to witness the corresponding requirements $L_{\sigma}$ and $M_{\tau}$.

When $\upsilon=\langle\rangle$, there is nothing to specify, so assume $\upsilon=\upsilon_-^\frown\langle j_0,\sigma_*,\tau_*\rangle$.  Then $K^X_\upsilon$ needs to verify that $(p_{|\upsilon|-1},q_{|\upsilon|-1})$ are a split pair, positioned correctly relative to our other split pairs, extending the appropriate sequences built at previous stages, and witnessing $L_{\sigma_*}$ and $M_{\tau_*}$.

When we have the sequence of data $(b_0,\ldots,b_{|\upsilon|}-1)$, we need to extract the subsequences corresponding to $p_i$ and $q_i$.  Define $\hat e=(e_{\pi^\upsilon_{j_0}}(0),\ldots,e_{\pi^\upsilon_{j_0}}(|\sigma_*|-1))$ and $\hat f=(f_{\rho^\upsilon_{j_0}}(0),\ldots,f_{\rho^\upsilon_{j_0}}(|\tau_*|-1))$.  Then we define $K^X_\upsilon((b_0,\ldots,b_{|\upsilon|}-1),\vec a)$ to hold if:
\begin{itemize}
\item $(p_{|\upsilon|-1},q_{|\upsilon|-1})$ is a split pair,
\item for each $j<j_0$ such that $\tau^{\upsilon}_j\neq\langle\rangle$, $p_{|\upsilon|-1}^+\prec q^+_{\rho^\upsilon_j(|\tau^\upsilon_j|-1)}$,
\item for each $j\in(j_0,r]$ such that $\tau^\upsilon_j\neq\langle\rangle$, $q_{\rho^\upsilon_j(|\tau^\upsilon_j|-1)}^+\prec p_{|\upsilon|-1}^+$,
  
\item if $|\sigma^\upsilon_{j_0}|>1$ then $p_{\pi^\upsilon_{j_0}}(|\sigma^\upsilon_{j_0}|-2)\sqsubseteq p_{|\upsilon|-1}$,
\item if $|\sigma^\upsilon_{j_0}|=1$ then $p\sqsubseteq p_{|\upsilon|-1}$,
\item if $|\tau^\upsilon_{j_0}|>1$ then $q_{\rho^\upsilon_{j_0}}(|\tau^\upsilon_{j_0}|-2)\sqsubseteq q_{|\upsilon|-1}$,
\item if $|\tau^\upsilon_{j_0}|=1$ then $q\sqsubseteq q_{|\upsilon|-1}$,
\item $L^{X\oplus p_{|\upsilon|-1}}_{\sigma_*}(\hat e,\hat a)$,
\item $M^{X\oplus q_{|\upsilon|-1}}_{\tau_*}(\hat f,\hat a)$.
\end{itemize}
The second and third requirements ensure that our split pairs are ordered correctly.  The fourth through seventh ensure that we are extending the sequences from the previous stage which we promised to exend.  The final two ensure that we have actually found the promised witnesses to $L_{\sigma_*}$ and $M_{\sigma_*}$.

In particular, the sequence of split pairs we discussed above is given by taking $(p_{\pi^\upsilon_j(|\sigma^\upsilon_j|-1)},q_{\pi^\upsilon_j(|\sigma^\upsilon_j|-1)})$ to be the $j$'th split pair.  We chose chosen $K^X_\upsilon$ so that $\Delta^X_{R;\upsilon}(c,b_0,\ldots,b_{|\upsilon|-1},\vec a_0,\ldots,\vec a_{|\upsilon|-1})$ implies that, for each $j\leq r$ we have
\[\Delta^{X\oplus p_{\pi^\upsilon_j(|\sigma^\upsilon_j|-1)}}_{R^+;\sigma^\upsilon_j}(c,e_{\pi^\upsilon_j(0)},\ldots,e_{\pi^\upsilon_j(|\sigma^\upsilon_j|-1)},\vec a_0,\ldots,\vec a_{|\sigma^\upsilon_j|-1})\]
and
\[\Delta^{X\oplus q_{\rho^\upsilon_j(|\tau^\upsilon_j|-1)}}_{R^-;\tau^\upsilon_j}(c,f_{\rho^\upsilon_j(0)},\ldots,f_{\rho^\upsilon_j(|\tau^\upsilon_j|-1)},\vec a_0,\ldots,\vec a_{|\tau^\upsilon_j|-1}).\]

So suppose that $c$ satisfies $R$ in $X$.  What remains is to show that we have the suitable extension of the original split pair $(p,q)$.  Since $c$ satisfies $R$, we may choose an $\upsilon$ so that $\Theta^X_{R;\upsilon}(c)$ holds.

If there is any $j$ so that $\sigma^\upsilon_j$ and $\tau^\upsilon_j$ are both leaves then, since $(p_{\pi^\upsilon_j(|\sigma^\upsilon_j|-1)},q_{\rho^\upsilon_j(|\tau^\upsilon_j|-1)})$ is a split pair, one of $(p_{\pi^\upsilon_j(|\sigma^\upsilon_j|-1)},q)$ or $(p,q_{\rho^\upsilon_j(|\tau^\upsilon_j|-1)})$ is a condition.  Suppose $(p_{\pi^\upsilon_j(|\sigma^\upsilon_j|-1)},q)$ is a condition; then this condition forces $T^+$ on the increasing side since $\Theta^{X\oplus p_{\pi^\upsilon_j(|\sigma^\upsilon_j|-1)}}_{T^+;\sigma^\upsilon_j}(c)$ holds for the leaf $\sigma^\upsilon_j$.  Similarly, if $(p,q_{\rho^\upsilon_j(|\tau^\upsilon_j|-1)})$ is a condition then this condition forces $T^-$ on the decreasing side.

If $\upsilon$ is a leaf, we claim there must be such a $j$.  Suppose there is no such $j$.  Since $\upsilon$ is a leaf, for each $j<r$, we must have at least one of $\sigma^\upsilon_{j+1}$ or $\tau^\upsilon_j$ is a leaf---otherwise we would have an extension corresponding to some $j,\sigma_*,\tau_*$.  $\tau^\upsilon_r$ is always a leaf (because $r=\max\{|\tau|\mid \tau\in T^+\}$), so if $\sigma^\upsilon_r$ is a leaf, we are done.  If not, $\tau^\upsilon_{r-1}$ must be a leaf.  Again, if $\sigma^\upsilon_{r-1}$ is a leaf, we are done; otherwise $\tau^\upsilon_{r-1}$ must be a leaf.  Continuing in this way, since there is no desired $j$, we conclude that $\tau^\upsilon_0$ must be a leaf.  But $\tau^\upsilon_0=\langle\rangle$, so if this is a leaf then $T^-$ is a trivial requirement and $(p,q)$ already satisfies $T^-$ on the decreasing side.

Suppose $\upsilon$ is not a leaf.  If there is any $j\leq r$ such that $|\tau^\upsilon_j|>0$ and $q_{\rho^\upsilon_j(|\tau^\upsilon_j|-1)}^+\not\in V$, take the largest such $j$ and let $\hat q=q_{\rho^\upsilon_j(|\tau^\upsilon_j|-1)}$.  Otherwise, let $\hat q=q$ and $j=0$.  If $\tau^\upsilon_j$ is a leaf then $(p, \hat q)$ is a condition such that $\theta^{X\oplus \hat q^+}_{T^-;\tau^\upsilon_j}(c)$ holds, so we are done.  So assume $\tau^\upsilon_j$ is not a leaf.  If $|\sigma^\upsilon_{j+1}|>0$ then let $\hat p=p_{\pi^\upsilon_{j+1}(|\sigma^\upsilon_{j+1}|-1)}$, otherwise let $\hat p=p$.  Observe that $\hat p^+\in V$: if $\hat p\neq p$ then $\hat p^+=q_{\rho^\upsilon_{j+1}(|\tau^\upsilon_{j+1}|-1)}^+$ which, by maximality of $j$, belongs to $V$.  So if $\sigma^\upsilon_{j+1}$ is a leaf then $(\hat p,q)$ is an extension by a similar argument.

So consider the case where neither $\sigma^\upsilon_{j+1}$ nor $\tau^\upsilon_j$ are leaves.  Then $(\hat p,\hat q)$ is an extension of $(p,q)$, and we claim it forces either $T^+$ on the increasing side or $T^-$ on the decreasing side.  Suppose not, so there are $\Lambda^+$ and $\Lambda^-$ witnessing this failure: $\Delta^{X\oplus\Lambda^+}_{R^+;\sigma^\upsilon_{j+1}}$ and $\Delta^{X\oplus\Lambda^-}_{R^-;\tau^\upsilon_j}$ hold but $\Theta^{X\oplus\Lambda^+}_{R^+;\sigma^\upsilon_{j+1}}$ and $\Theta^{X\oplus\Lambda^-}_{R^-;\tau^\upsilon_j}$ do not.  Then there must be finite $p^*,q^*$ so that $\hat p\sqsubseteq p^*\sqsubset\Lambda^+$ and $\hat q\sqsubseteq q^*\sqsubset\Lambda^-$ are large enough to witness $L^{X\oplus p^*}_{R^+;\sigma_*}$ and $M^{X\oplus q^*}_{R^-;\tau_*}$ for some $\sigma_*$ and $\tau_*$.  

We need to fix $p^*$ and $q^*$ so they share a common endpoint: choose some $s\in V$ so that, for each $j'>j$, $p_{\pi^\upsilon_{j'}(|\sigma_{j'}^\upsilon|-1)}^+\prec s$ (such an $s$ exists since, for each such $j'$, $p_{\pi^\upsilon_{j'}(|\sigma_{j'}^\upsilon|-1)}^+\in V$).  Then $(p^*{}^\frown\langle s\rangle,q^*{}^\frown\langle s\rangle)$ is a split pair witnessing $K^X_{R;\upsilon^\frown\langle j,\sigma_*,\tau_*\rangle}$.

\end{proof}

Before going on, we note some general features of our constructions illustrated by this argument.  Say we have some requirements $R^+$ and $R^-$, and we are attempting to produce a requirement $R$ so that whenever some $\Theta^X_{R;\upsilon}(c)$ holds, we have either a sequence $p$ so that some $\Theta^{X\oplus p}_{R^+;\sigma}(c)$ holds or some sequence $q$ so that some $\Theta^{X\oplus q}_{R^-;\tau}(c)$ holds.  (In the proof above, $p$ and $q$ were chains, but later they will be antichains or other kinds of sequences.)

When $\Delta^X_{R;\upsilon}(c,b_0,\ldots,b_{|\upsilon|-1},\vec a_0,\ldots,\vec a_{|\upsilon|-1})$ holds, the $b_i$ must encode the description of a list of sequences $p_1,\ldots,p_{r^+}$ and sequences $q_1,\ldots,q_{r^-}$ which are candidates to be the needed witnesses, so that $\Delta^{X\oplus p_j}_{R^+;\sigma^\upsilon_j}(c,\cdots)$ and $\Delta^{X\oplus q_j}_{R^-;\sigma^\upsilon_j}(c,\cdots)$ will hold (with witnesses encoded suitably in the $b_i$).  We will call these the ``witnessing sequences''.

Consider some witnessing sequence $p_j$.  This sequence must have been constructed in $|\sigma^\upsilon_j|$ segments, with each segment corresponding to some stage of $\upsilon$: that is, there should be a function $\pi^\upsilon_j:[0,|\sigma^\upsilon_j|)\rightarrow[0,|\upsilon|)$ so that when $\Delta^X_{R;\upsilon}(c,b_0,\ldots,b_{|\upsilon|-1},\vec a_0,\ldots,\vec a_{|\upsilon|-1})$ holds, this implies that 
\[\Delta^{X\oplus p_j}_{R^+;\sigma^\upsilon_j}(c,e_0,\ldots,e_{|\sigma^\upsilon_j|-1},\vec a'_0,\ldots,\vec a'_{|\sigma^\upsilon_j|-1})\]
holds where each $e_j$ is encoded in $b_{\pi^\upsilon_j(|\sigma^\upsilon_j|-1)}$ and each $\vec a'_j\subseteq \pi_j(|\sigma^\upsilon_j|-1)$.

We can make a crucial observation about the stages at which our witnessing sequences get extended.  Suppose that $\upsilon'$ is some immediate successor of $\upsilon$, and that there is a witnessing sequence $p_{j'}$ at stage $\upsilon'$ with $\pi^{\upsilon'}_{j'}(|\sigma^{\upsilon'}_{j'}|-1)=|\upsilon|$---that is, at stage $\upsilon$ there was a witnessing sequence $p_{j}$ and $p_{j'}$ is a proper immediate extension of it, so $\upsilon$ was one of the stages at which $p_{j'}$ was constructed.  Then we must have had $d_\upsilon(\pi^\upsilon_j(i))=d_{\sigma^{\upsilon}_{j}}(i)$ for all $i<|\sigma^\upsilon_{j}|$.  When this happens, we say $p_{j}$ is \emph{active} at $\upsilon$.  Otherwise we say $p_j$ is \emph{inactive}, and is therefore not eligible to be extended at stage $\upsilon$.

This basic structure, of active and inactive witnessing sequences constructed in stages and the functions $\pi$ (and the parallel functions $\rho$) which correspond stages of $\upsilon$ with stages of $\sigma$ or $\tau$, will appear in all our arguments.

\subsection{Solving $\mathbf{CAC}$}

It is convenient to restrict ourselves to partial orderings which are refinements of the usual ordering on $<$; the following lemma shows that this restriction is harmless for our purposes.
\begin{lemma}
  Suppose $\mathcal{I}$ is a Turing ideal and whenever $\preceq$ is a partial ordering in $\mathcal{I}$ so that $a<b$ implies $b\not\preceq a$, $\mathcal{I}$ contains either an infinite chain or an infinite chain in $\preceq$.  Then $\mathcal{I}$ contains an infinite chain or antichain for every partial ordering.
\end{lemma}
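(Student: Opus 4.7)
The plan is to apply the hypothesis twice through a two-step reduction. Given a partial order $\preceq\in\mathcal{I}$, I will first build a modified order $\preceq'$ that refines the usual order on $\mathbb{N}$ (so $a<b$ implies $b\not\preceq' a$) and then lift chains and antichains back to $\preceq$. Because the antichain case will not lift directly, a second reduction on a solution to the first will be needed.

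First, define $i\preceq' j$ iff $i\preceq j$ and $i\leq j$. A direct check shows $\preceq'$ is a partial order computable from $\preceq$ (hence in $\mathcal{I}$), and by construction $a<b$ implies $b\not\preceq' a$. Applying the hypothesis to $\preceq'$ yields an infinite $\Lambda\in\mathcal{I}$ which is either a $\preceq'$-chain or a $\preceq'$-antichain. In the chain case we are done, since $\preceq'\subseteq\preceq$ forces a $\preceq'$-chain to be a $\preceq$-chain.

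The interesting case is when $\Lambda$ is a $\preceq'$-antichain: for $a<b$ in $\Lambda$, the antichain condition together with $a\leq b$ gives $a\not\preceq b$, but nothing so far forbids $b\preceq a$. To pick up the missing direction, define a second order on $\Lambda$ by $a\preceq'' b$ iff $a\leq b$ and $b\preceq a$; routine verification shows this is a partial order. Pulling back to $\mathbb{N}$ along the increasing enumeration $\pi$ of $\Lambda$ gives a partial order $\preceq'''\in\mathcal{I}$ on $\mathbb{N}$ which again refines $\leq$. Applying the hypothesis a second time produces an infinite chain or antichain $M\in\mathcal{I}$ for $\preceq'''$, which transports via $\pi$ into $\Lambda$. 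If $M$ is a $\preceq'''$-chain, the resulting subset of $\Lambda$ satisfies $b\preceq a$ whenever $a<b$ — a $\preceq$-chain in reverse. If $M$ is a $\preceq'''$-antichain, then for $a<b$ in the image, we get $b\not\preceq a$ (since $a\leq b$), which combined with the already-established $a\not\preceq b$ on $\Lambda$ produces a genuine $\preceq$-antichain.

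The main obstacle is conceptual rather than technical: the one-step refinement $\preceq'$ records only those $\preceq$-related pairs that point ``the correct way'' relative to $\leq$, so a $\preceq'$-antichain has no information about $\preceq$-edges pointing ``backwards''. The fix is to iterate: on a $\preceq'$-antichain $\Lambda$, the missing edges are exactly what $\preceq''$ records, and since $\preceq''$ also refines $\leq$ (after re-indexing by $\pi$) the hypothesis applies again. Everything else reduces to the bookkeeping that $\preceq'$ and $\preceq'''$ are in $\mathcal{I}$ and that chains/antichains in $\preceq'''$ transport correctly back through $\pi$.
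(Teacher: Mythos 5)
Your proof is correct and follows essentially the same route as the paper: first restrict $\preceq$ to pairs ordered consistently with $\leq$, and in the antichain case reapply the hypothesis to the reversed order pulled back along the increasing enumeration of the antichain (the paper's $\preceq^*$ is exactly your $\preceq'''$). No gaps.
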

\begin{proof}
  Let $\preceq$ be an arbitrary partial ordering in $\mathcal{I}$.  Define $a\preceq' b$ if $a\leq b$ and $a\preceq b$.  Then $\mathcal{I}$ contains either a chain or an antichain for $\preceq'$; if $\mathcal{I}$ contains a chain then it is also a chain in $\preceq$.  Suppose $n_1<n_2<\cdots$ is an infinite antichain in $\mathcal{I}$.  For $a\leq b$, define $a\preceq^*b$ if $n_b\preceq n_a$.  Then $\preceq^*$ is a partial ordering with a chain or an antichain in $\mathcal{I}$, which is also a chain or antichain for $\preceq$.
\end{proof}

\begin{lemma}\label{thm:ads_sts_cac}
  Suppose $c$ satisfies every \ADSSTS{} in $X$ and $\preceq$ is a partial ordering so that $a<b$ implies $b\not\preceq a$.  Then there is an infinite $\Lambda$ which is either a chain or an antichain so that $c$ satisfies every \ADSSTS{} in $X\oplus\Lambda$.
\end{lemma}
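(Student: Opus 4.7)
The plan is to adapt the forcing argument of Lemma \ref{thm:ads_sts_ads}, replacing the $\prec$-decreasing side with an antichain side. First I would reduce to a suitable ``stable-ish'' partial ordering by defining the reservoir $V = \{n : \{m > n : n \prec m\} \text{ is infinite}\}$. If either $V$ or $\mathbb{N}\setminus V$ yields an $X$-computable infinite chain or antichain outright (analogous to the non-stable-ish case for $\mathbf{ADS}$), we take $\Lambda$ to be that sequence and are done with no additional requirements imposed on $c$; so we may assume both that $V$ is ``chain-rich'' and that $\mathbb{N}\setminus V$ is ``antichain-rich'' in a suitably flexible sense.

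A \emph{condition} would then be a pair $(p, A)$ where $p$ is a finite $\prec$-increasing chain with $p^+\in V$ and $A$ is a finite antichain contained in $\mathbb{N}\setminus V$, and an extension refines both $p$ and $A$. As in the $\mathbf{ADS}$ proof, I would say $(p, A)$ \emph{forces $R$ on the chain side} if every infinite $\prec$-increasing $\Lambda\supseteq p$ with $\Lambda\subseteq V$ makes $c$ satisfy $R$ in $X\oplus\Lambda$, and symmetrically on the antichain side for infinite antichains $\Lambda\supseteq A$ with $\Lambda\subseteq\mathbb{N}\setminus V$. The analog of a ``split pair'' from the $\mathbf{ADS}$ proof would be a single element $x$ with $p^+\prec x$ and $x$ $\prec$-incomparable with every element of $A$: such an $x$ extends $p$ if $x\in V$ and extends $A$ if $x\notin V$, and the dichotomy $V$ versus $\mathbb{N}\setminus V$ ensures exactly one of the two enlarged configurations remains a valid condition.

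With these pieces in place, the argument parallels Lemma \ref{thm:ads_sts_ads}: it suffices to prove the analog of $(\ast)$, namely that given a condition $(p,A)$ and requirements $R^+,R^-$, there is an extension forcing $R^+$ on the chain side or $R^-$ on the antichain side. To do this, I would build a finite tree-shaped \ADSSTS{} $R$ patterned on Figure \ref{fig:ADS}: each node $\upsilon$ carries collections of candidate pairs $(\hat p_j,\hat A_j)$ with split elements $\hat x_j$, together with monotone bookkeeping functions $\pi^\upsilon_j,\rho^\upsilon_j$ recording the stages at which positive components of $R^+$ and $R^-$ were witnessed. The assumption that $c$ satisfies $R$ in $X$ then gives a witness $\Theta^X_{R;\upsilon}(c)$ which, by inspection of whether the topmost split element lies in $V$ or not, produces the required forcing extension. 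Iterating over an enumeration of pairs $(R^+_i,R^-_i)$ yields $\Lambda$.

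The main obstacle, as I expect it, is the structural asymmetry between the two sides. When a split is resolved by extending the chain, $p^+$ advances and future chain witnesses must live $\prec$-above the new endpoint; when the split is resolved by extending the antichain, $p^+$ stays put but the list of incomparability constraints on future antichain witnesses grows. Consequently, the clean ``$(p')^+=(q')^+$'' abstraction of the $\mathbf{ADS}$ case does not translate verbatim, and the bookkeeping must simultaneously track the moving chain endpoint and the accumulating antichain obstructions. Verifying that every non-leaf at which $\Theta^X_{R;\upsilon}$ holds admits the desired forcing extension — and that every leaf outright forces one side — requires carefully choosing the branching rule so that any ``stuck'' situation is itself already a forcing situation, analogous to the multi-stage dance described after Figure \ref{fig:ADS}. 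The lexicographic height bound on the tree, however, should go through with an analogous measure that records $(|\sigma^\upsilon_j|,|A^\upsilon_j|)$ at each active stage $j$.
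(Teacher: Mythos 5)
There is a genuine gap, and it sits exactly where your sketch defers the work. Your plan transplants the fixed-$V$ dichotomy from the $\mathbf{ADS}$ argument, but for partial orders there is no analogue of the stable-ish lemma that lets you ``assume $V$ is chain-rich and $\mathbb{N}\setminus V$ is antichain-rich.'' With $V=\{n:\ n\prec m$ for infinitely many $m\}$, an element of $V$ need not have \emph{any} successors inside $V$ (e.g.\ a point whose only successors are infinitely many pairwise incomparable maximal elements), so the chain side of your conditions can become permanently unextendible even though no $X$-computable chain or antichain exists; and since membership in $V$ is not $X$-computable, the fallback ``if one side is stuck we extract a computable solution'' is not available. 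This is precisely why the paper does not work with a fixed $V$ at all: its conditions are Mathias-style triples $(p,q,S)$ with an infinite $X$-computable reservoir $S$ ($p$ pointwise $\prec$-below $S$, $q$ pointwise incomparable to $S$), and the elementary dichotomy is taken \emph{relative to the current reservoir}: for $x\in S$, one of $S_{\succ x}$, $S_{\perp x}$ is infinite, and if no element can ever be added to one side then a greedy $X$-computable search through $S$ already produces an infinite chain or antichain. Your conditions $(p,A)$ carry no reservoir, so neither this extendibility lemma nor its computable fallback goes through.

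The second, related gap is in the step ``$\Theta^X_{R;\upsilon}(c)$ holds, inspect whether the topmost split element lies in $V$, and produce the forcing extension.'' In the $\mathbf{ADS}$ proof the split-pair trick works because both halves of the split pair independently carry full witnesses for their trees and the $V$-question only decides which half is a legal condition. A single element $x\succ p^+$ incomparable to $A$ carries no witnesses for $R^-$, and a block of antichain witnesses found by an $X$-computable search cannot be required to lie in $\mathbb{N}\setminus V$ (not computable), so it cannot legally be appended to your $A$. Moreover, to convert ``no further witnesses appear'' into forcing one must be able to shrink the search space to an infinite $X$-computable set (the paper repeatedly passes to $S\cap(t,\infty)$, $\bigcap_{x\in q'}S_{\perp x}$, or the set $S^*$ of endpoints of witnessing chains); with only the constraints ``above $p^+$'' and ``in $V$'' this is impossible. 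The paper's resolution of this asymmetry is the trial-antichain/partnered-antichain mechanism---trial antichains are assembled from endpoints of witnessing chains, so that whichever way the (non-computable) question ``are there infinitely many elements above some $x\in q'$ in $S$?'' resolves, one obtains either a legal antichain extension with a new reservoir or a legal chain extension with a reservoir on which the opposing search is provably barren. Your proposal names this as the obstacle but supplies no mechanism for it, and without such a mechanism the induction step $(\ast)$ cannot be carried out.
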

\begin{proof}
  We force with conditions which are triples $(p,q,S)$ so that:
  \begin{itemize}
  \item $p$ is a chain,
  \item $q$ is an antichain,
  \item $S$ is an infinite $X$-computable set, $p<S$, $q<S$, if $a\in p$, $b\in q$, and $c\in S$ then $a\prec c$ and $b\not\prec c$.
  \end{itemize}
A condition $(p',q',S')$ extends $(p,q,S)$ if $p\sqsubseteq p', q\sqsubseteq q'$, $(p'\setminus p)\subseteq S$, $(q'\setminus q)\subseteq S$, and $S'\subseteq S$.  We say $(p,q,S)$ \emph{forces $R$ on the chain side} if whenever $\Lambda$ is an infinite chain extending $p$ with $\Lambda\setminus p\subseteq S$, $c$ satisfies $R$ in $X\oplus \Lambda$.  Similarly, we say $(p,q,S)$ \emph{forces $R$ on the antichain side} if whenever $\Lambda$ is an infinite antichain extending $q$ with $\Lambda\setminus q\subseteq S$, $c$ satisfies $R$ in $X\oplus \Lambda$.

For any $x\in S$, let $S_{\succ x}=\{y\in S\mid x\prec y\}$ and $S_{\perp x}=\{y\in S\mid x<y, x\not\prec y\}$, so $S\setminus (S_{\succ x}\cup S_{\perp x})$ is finite.  Then either $(p^\frown\langle x\rangle,q,S_{\succ x})$ or $(p,q^\frown\langle x\rangle,S_{\perp x})$ is a condition.  In particular, we may always extend at least one of $p$ and $q$ by one element.  Furthermore, if there do not exist at least one $x$ which can be added to the $p$ side and at least one which can be added to the $q$ side then $\preceq$ has an $X$-computable chain or antichain: say there is no $x$ which can be added to the $q$ side, so for every $x\in S$, $S_{\perp x}$ is finite.  Then we can greedily add elements from $S$ to $p$ and obtain an infinite chain.

So it suffices to show:
\begin{quote}
  ($\ast$) Suppose $R^+$ and $R^-$ are requirements and $(p,q,S)$ is a condition.  Then there is a condition $(p',q',S')$ extending $(p,q,S)$ which either forces $R^+$ on the chain side or $R^-$ on the antichain side.
\end{quote}
For suppose we have shown this.  Then we fix a list of requirements $R^+_i,R^-_i$ so that for any pair of requirements $R^+,R^-$, there is an $i$ with $R^+_i=R^+, R^-_i=R^-$.  We construct a sequence $(\langle\rangle,\langle\rangle)=(p_0,q_0,S_0), (p_1,q_1,S_1),\ldots$ with $(p_{i+1},q_{i+1},S_{i+1})$ extends $(p_i,q_i,S_i)$, $(p_{2i+1},q_{2i+1},S_{2i+1})$ either forces $R^+_i$ on the chain side or $R^-_i$ on the antichain side, $p_{2i}$ has length $\geq i$, and $q_{2i}$ has length $\geq i$.  Let $\Lambda^+=\bigcup p_i$ and $\Lambda^-=\bigcup q_i$.  If $c$ does not satisfy every \ADSSTS{} in $X\oplus\Lambda^+$ then there is some $R^+$ which it fails to satisfy, and therefore for each $R^-$ there was an $i$ with $R^+_i=R^+, R^-_i=R^-$, and therefore since $(p_{2i+1},q_{2i+1})$ must not have forced $R^+$ on the chain, $(p_{2i+1},q_{2i+1})$ forced $R^-$ on the antichain, and therefore $\Lambda^-$ satisfies every \ADSSTS{} in $X\oplus\Lambda^-$.

So it suffices to show ($\ast$).  Let $R^+=(T,\{L_\sigma\}_{\sigma\in T+},\{d^+_\sigma\}_{\sigma\in T+})$ and $R^-=(T^-,\{M_\tau\}_{\tau\in T^-},\{d^-_\tau\}_{\tau\in T^-})$.  Let $D=\max\{|\sigma|\mid\sigma\in T^+\}$ and $E=\max\{|\tau|\mid\tau\in T^-\}$.  We will describe a requirement $R=(T,\{K_\upsilon\}_{\upsilon\in T},\{d_\upsilon\}_{\upsilon\in T})$.

We attempt to outline the construction before the proof.  With $\mathbf{CAC}$, we only have the benefit of transitivity for one side of our construction.  The analog of a split pair is a \emph{supported antichain}; this is a tuple $(p^0,\ldots,p^m,q')$ where $q'$ is an antichain built in $m$ segments, $q'=q_0^\frown q_1^\frown\cdots^\frown q_m$, and each $p^i$ is a chain with $(p^i)^+\prec x$ for each $x\in q_i$.  (In this discussion, we always assume that all chains and antichains we discuss are contained in $S$.)


Then any $x$ is incomparable to every element of the chain $q'$, or above some $x$ in some $q_i$, and therefore above the chain $p^i$.  One can think of a split pair as the case where only the last ``support'', $p^m$, needs to be retained.

We will be able to extend $q'$ with a new segment while leaving the $p^i$ intact, as long as we can find a suitable $p^{m+1}$ to support the new segment.  Extending a $p^i$, however, will break the antichain $q$; instead, the extension of $p^i$ will have to involve using the extension to support a new antichain.  This leads to some difficult bookkeeping to keep track of all the supported antichains we need, which we will discuss in detail later.

The need to retain the support $p^i$ when we extend $q$ complicates our construction: it means that $p^i$ and $q_i$ cannot share the same block of witnesses, because when $q$ extends, the restraint on $q^i$'s block of witnesses can change, so $q^i$ cannot be competing with $p^i$ for how to restrain this block of witnesses.  So we will need a mechanism to find antichains $q'$ above chains $p'$ so that the witnesses to $p'$ are in a different block from the witnesses to $q'$.

We can illustrate our approach to this by looking at the simplest case for $\mathbf{CAC}$: we have two requirements of length $1$, say $R^+$ and $R^-$.  For simplicity, let us say $T^+$ and $T^-$ each consist of a single non-empty node with simple block statements $K_+$ and $K_-$.  The larger bookkeeping issues do not interfere.

The corresponding tree $T$ has a single immediate descendent of $\langle\rangle$, say $\langle 1\rangle$.  $K^X_{\langle 1\rangle}$ will demand that we find a block of witnesses $\vec a$ and an antichain $q'$ witnessing $K_-^X$ such that, for every $x\in q'$, we have a chain $p'$ with $x=(p')^+$ witnessing $K^X_+$, with all witnesses coming from the block $\vec a$.

Suppose we cannot find such a $q'$ and such a family of $p'$---that is, suppose $\Theta^X_{T;\langle\rangle}(c)$ holds.  If there is a $t$ so that every antichain $p'$ witnessing $K^X_+$ has $(p')^+\leq t$ then, by forcing with $(p,q,S\cap(t,\infty))$, we have forced $R^+$ on the chain side (by ensuring that $\Theta^{X\oplus\Lambda}_{T^+;\langle\rangle}(c)$ will hold).  On the other hand, if antichains witnessing $K^X_+$ appear unboundedly, we can take $S'\subseteq S$ to consist of those $x$ such that there exists an antichian $p'$ witnessing $K^X_+$ with $(p')^+=x$, and by forcing with $(p,q,S')$, we have forced $R^-$ on the antichain side.

At the stage $\langle 1\rangle$, $q'$ is active and the $p'$ are (potentially) inactive---that is, $d_{\langle\rangle}(0)=d^-_{\langle\rangle}(0)$.

$\langle 1\rangle$ will also have one immediate descendent, $\langle 1,1\rangle$.  $K^X_{\langle 1,1\rangle}$ will look for single chain from our family, $p'$, and a new antichain $q^*$ such that, for every $x\in q^*$, $(p')^+\prec x$.  Suppose we cannot find such a $q^*$, so $\Theta^X_{T;\langle 1\rangle}(c)$ holds.  If $\bigcap_{x\in q'}S_{\perp x}$ is infinite then we can extend to $(p,q', \bigcap_{x\in q'}S_{\perp x})$ and have forced $T^-$ on the antichain side by satisfying $\Theta^{X\oplus q'}_{T^-;\langle 1\rangle}(c)$.  If there are only finitely many such elements, then there must be some infinite set $S'\subseteq S$ and a single one of our chains, $p'$, such that, for every $x\in S'$, $(p')^+\prec x$.  In this case we can extend to $(p,q,S')$ and have forced $T^-$ on the antichain side by ensuring that $\Theta^{X\oplus\Lambda}_{T^-;\langle \rangle}(c)$ will hold.

$\langle 1,1\rangle$ is a leaf.  We make $p'$ and $q^*$ both active, which we can do since their witnesses come from different blocks.  If $\Theta^X_{T;\langle 1,1\rangle}(c)$ holds then one of $S_{\perp (p')^+}$ and $S_{\succ (p')^+}$ is infinite, so either $(p',q,S_{\succ (p')^+})$ or $(p,q^*,S_{\perp (p')^+})$ is the extension we want.

We note that there are two distinct attempts to find an antichain; these form a key part of our construction, so we give them names.  The first attempt, when we construct an antichain $q'$ out of endpoints of chains, we call a \emph{trial antichain}.  The second attempt, when we find $q^*$ above some chain $p'$, will be our strategy for finding supported antichains.

In our full construction, we will have to use this trial antichain construction many times: every time we need to extend a supported antichain, we will need to first (attempt to) construct a trial antichain.  If the construction of the trial antichain fails, we will find the witnesses we need.  If the construction succeeds, we will then be able to look for a suitable segment of a supported antichain.

Now we turn to organizing the many partnered antichains we will need to keep track of.  Let $r=\max\{|\sigma|\mid \sigma\in T^+\}$ and $s=\max\{|\tau|\mid \tau\in T^+\}$.  Each supported antichain has $\leq s$ segments, and each segment is supported by a chain of length $\leq r$.  Our goal is to work towards a supported antichain of length $s$, each of whose segments is supported by a chain of length $r$: in this case all our chains and our antichain must witness leaves of $T^+$ or $T^-$, respectively, and therefore one of them will suffice to extend $(p,q,S)$ by.

The difficulty is keeping track of the chains and antichains so that we can make sure we make progress.  (For instance, it is possible to loop if we extend chains carelessly.)  For every function $\omega:[0,s)\rightarrow(0,r]$, we will have an antichain $q_\omega$ corresponding to $\omega$---that is, our goal is to arrange for $q_\omega$ to have segments with the specified support.  In particular, when $q_\omega$ has length $i$, we will not extend $q_\omega$ unless there is a suitable chain of length $\omega(i)-1$ which we expect will support the next segment of $q_\omega$.

Dually, we will have a collections of chains.  Our chains will be indexed by partial functions.  For each $s'\in[0,s)$, let us write $\overline{s'}$ for the set $[0,s)\setminus\{s'\}$.  Then let $G$ be the set of functions $\gamma$ such that, for some $s_\gamma\in[0,s)$, $\gamma:\overline{s_\gamma}\rightarrow(0,r]$.  Then, for each $\gamma\in G$, we will keep track of a chain $p_\gamma$.  The chain $p_\gamma$ will only ever support the segment $s_\gamma$ of an antichain, and it will only support antichains $q_\omega$ such that $\omega\upharpoonright\overline{s_\gamma}=\gamma$.  (For example, suppose $s=r=2$, and consider $\gamma:(0,2)\rightarrow (0,2)$ with $\gamma(1)=2$.  Then when we construct $p_\gamma$ to have length $1$, it must be because $p_\gamma$ is supporting the first segment of $q_{\{(0,1),(1,2)\}}$.  Later, we might succeed in extending $p_\gamma$ to have length $2$, at which point it must be supporting the first segment of $q_{\{(0,2),(1,2)\}}$.)

For each node $\upsilon\in T$, we will have:
\begin{itemize}
\item for each $\gamma$, a $\sigma^\upsilon_\gamma\in T^+$,
\item a monotone function $\pi^\upsilon_\gamma:\dom(\sigma^\upsilon_\gamma)\rightarrow\dom(\upsilon)$.
\end{itemize}

We say that $\omega$ is \emph{relevant} at $\upsilon$ if, for each $i<|\tau^\upsilon_\omega|$, $|\sigma^\upsilon_{\omega\upharpoonright\overline{i}}|\leq \omega(i)$.  When $\omega$ is not relevant, it means that one of the $\gamma$'s needed to support $\omega$ has ``outgrown'' $\omega$---that is, grown taller than $\omega(i)$---and therefore we can no longer support $\omega$.

For each node $\upsilon$ and each $\omega$ relevant at $\upsilon$, we will keep track of:
\begin{itemize}
\item a $\tau^\upsilon_\omega\in T^-$, and 
\item a monotone function $\rho^\upsilon_\omega:\dom(\tau^\upsilon_\omega)\rightarrow\dom(\upsilon)$.
\end{itemize}
We require that the ranges of the $\pi^\upsilon_\gamma,\rho^\upsilon_\omega$ be pairwise disjoint.  We will always have $d_\upsilon(\pi_\gamma^\upsilon(i))=d^+_{\sigma^\upsilon_\gamma}(i)$ and $d_\upsilon(\rho_\omega^\upsilon(i))=d^-_{\tau^\upsilon_\omega}(i)$.

In order to be able to extend a supported antichain, we must have the right matchup between a relevant antichain and a chain: that is, we need a relevant $\omega$ and a $\gamma$ such that $|\tau^\upsilon_\omega|=s_\gamma$, $\omega\upharpoonright\overline{s_\gamma}=\gamma$, and $|\sigma^\upsilon_\gamma|=\omega(s_\gamma)-1$.  When this happens, we say $\omega$ is \emph{active} at $\upsilon$.

For each $\omega$ which is active at $\upsilon$, we also have:
\begin{itemize}
\item a $\hat \tau^\upsilon_\omega\in T^-$, and
\item a $\hat\rho^\upsilon_\omega:\dom(\tau^\upsilon_\omega)\rightarrow\dom(\upsilon)$
\end{itemize}
representing a trial antichain.  We require that the ranges of the $\hat\rho^\upsilon_\omega$ be pairwise disjoint and be disjoint from the ranges of all $\pi^\upsilon_\omega$ and $\rho^\upsilon_\omega$.  Of course we set $d_\upsilon(\hat\rho^\upsilon_\omega(i))=d^-_{\hat\tau^\upsilon_\omega}(i)$.

This is the full information we need to associate with a node $\upsilon$.  For the base case, we define:
\begin{itemize}
\item for each $\gamma$, $\sigma^{\langle\rangle}_\gamma=\langle\rangle$ and $\pi^{\langle\rangle}_\gamma$ is the empty function,
\item for each $\omega$, $\tau^{\langle\rangle}_\omega=\langle\rangle$ and $\rho^{\langle \rangle}_\omega$ is the empty function.
\end{itemize}
This means that $\omega$ is active exactly when $\omega(0)=1$.  For all such $\omega$, we define:
\begin{itemize}
\item $\hat\tau^{\langle\rangle}_\omega=\langle\rangle$ and $\hat\rho^{\langle\rangle}_\omega$ is the empty function.
\end{itemize}

Given a node $\upsilon$, we describe the children of $\upsilon$.  These children come in two types---the version where we extend a trial antichain and the version where we match up a supported antichain with a partnered chain.

For each active $\omega_0$ and each immediate extension $\tau$ of $\hat\tau^\upsilon_{\omega_0}$ in $T^-$, there is an extension $\upsilon'=\upsilon^\frown\langle (0,\omega_0,\tau)\rangle$ with:
\begin{itemize}
\item for each $\gamma$, $\sigma^{\upsilon'}_\gamma=\sigma^\upsilon_\gamma$ and $\pi^{\upsilon'}_\gamma=\pi^\upsilon_\gamma$,
\item for each $\omega$, $\tau^{\upsilon'}_\omega=\tau^\upsilon_\omega$ and $\rho^{\upsilon'}_\omega=\rho^\upsilon_\omega$,
\item $\hat\tau^{\upsilon'}_{\omega_0}=\tau$ and $\hat\rho^{\upsilon}_{\omega_0}=\{(|\tau|-1,|\upsilon'|-1)\}$,
\item for each active $\omega\neq\omega_0$, $\hat\tau^{\upsilon'}_\omega=\hat\tau^\upsilon_\omega$ and $\hat\rho^{\upsilon'}_\omega=\hat\rho^\upsilon_\omega$.
\end{itemize}

For each active $\omega_0$, let $\gamma_0=\omega_0\upharpoonright \overline{|\tau^\upsilon_{\omega_0}|}$.  For each immediate extension $\sigma$ of $\sigma^\upsilon_{\gamma_0}$, each immediate extension $\tau$ of $\tau^\upsilon_{\omega_0}$, and each $j_0<|\hat\tau^\upsilon_{\omega_0}|$, there is an extension $\upsilon'=\upsilon^\frown\langle (1,\omega_0,\sigma,\tau,j_0)\rangle$ with:
\begin{itemize}  
\item $\sigma^{\upsilon'}_{\gamma_0}=\sigma$ and $\pi^{\upsilon'}_{\gamma_0}=\pi^\upsilon_\gamma\cup\{(|\sigma|-1,\hat\rho^\upsilon_{\omega_0}(j_0))\}$,
\item for each $\gamma\neq\gamma_0$, $\sigma^{\upsilon'}_\gamma=\sigma^\upsilon_\gamma$ and $\pi^{\upsilon'}_\gamma=\pi^\upsilon_\gamma$,
  
\item $\tau^{\upsilon'}_{\omega_0}=\tau$ and $\rho^{\upsilon'}_{\omega_0}=\rho^\upsilon_{\omega_0}\cup\{(|\tau|-1,|\upsilon'|-1)\}$,  
\item for each $\omega\neq\omega_0$ which is still relevant at $\upsilon'$, $\tau^{\upsilon'}_\omega=\tau^\upsilon_\omega$ and $\rho^{\upsilon'}_\omega=\rho^\upsilon_\omega$,
\item for each $\omega$ active at $\upsilon'$, $\hat\tau^{\upsilon'}_\omega=\langle\rangle$ and $\hat\rho^{\upsilon'}_\omega$ is the empty function.
\end{itemize}

To see that this tree is finite, observe that a branch can only have finitely many extensions of the second kind in a row---there are only finitely many choices for $\omega_0$, and each extension of the second kind extends one of the $\hat\tau^\upsilon_{\omega_0}$, which can happen at most $s$ times.  Each extension of the first kind extends one of the $\tau^\upsilon_{\omega_0}$, which can also happen at most $s$ times.

We next need to define the block statements $K^X_\upsilon((b_0,\ldots,b_{|\upsilon|-1}),\vec a)$.  For $\upsilon=\langle\rangle$ this is trivial.  Otherwise, let $\ell=|\upsilon|-1$.

Suppose $\upsilon$ is a node of the first kind, $\upsilon=\upsilon_-^\frown\langle (0,\omega_0,\tau)\rangle$.  Let $\gamma_0=\omega_0\upharpoonright\overline{|\tau^\upsilon_{\omega_0}|}$.  The auxiliary datum $b_\ell$ has the form $(k_\ell,p^0_\ell,e^0_\ell,\ldots,p^{k_\ell}_\ell,e^{k_\ell}_\ell,q_\ell,f_\ell)$.  If $\omega_0(|\tau^\upsilon_{\omega_0}|)=1$ then let $p'=p$; otherwise, let $p'=p_{\pi^\upsilon_{\gamma_0}(|\sigma^\upsilon_{\gamma_0}|-1)}$.  If $|\tau|=1$, let $q'=q$.  Otherwise let $q'=q_{\hat \rho^\upsilon_{\omega_0}(|\hat\tau^\upsilon_{\omega_0}|-1)}$.  Then we define $K^X_\upsilon((b_0,\ldots,b_{|\upsilon|-1}),\vec a)$ to hold if:
\begin{itemize}
\item for each $k\leq k_\ell$, $p'\sqsubseteq p^k_\ell$ and $(p^k_\ell\setminus p')\subseteq S$,
\item for each $k\leq k_\ell$, $p^k_\ell$ is a chain and some $\sigma^k_\ell$ immediately extending $\sigma^\upsilon_{\gamma_0}$ so that $L^{X\oplus p^k_\ell}_{\sigma^k_\ell}((e_{\pi^\upsilon_{\gamma_0}(0)},\ldots,e_{\pi^\upsilon_{\gamma_0}(|\sigma^\upsilon_{\gamma_0}|-1)}),e^k_\ell),\hat a)$ holds,
  \item $q'\sqsubseteq q_\ell$,
  \item for each $x\in q_\ell\setminus q'$, there is a $k\leq k_\ell$ so that $(p^k_\ell)^+=x$ (and therefore $(q_\ell\setminus q')\subseteq S$),
  \item $q_\ell$ is an antichain so that $M^{X\oplus q_\ell}_\tau((f_{\hat\rho^\upsilon_{\omega_0}(0)},\ldots,f_{\hat\rho^\upsilon_{\omega_0}(|\hat\tau^\upsilon_{\omega_0}|-1)},f_\ell),\vec a)$ holds.
  \end{itemize}

  Suppose we have a node of the second kind, $\upsilon=\upsilon_-^\frown\langle (1,\omega_0,\sigma,\tau,j_0)\rangle$ and let $\ell=|\upsilon|-1$.  The auxiliary datum $b_\ell$ has the form $(p_\ell,e_\ell,q_\ell,f_\ell)$.  If $|\tau|=1$, let $q'=q$.  If $|\tau|>1$, let $q'=q_{\rho^{\upsilon_0}_{\omega_0}(|\tau^{\upsilon_0}_{\omega_0}|-1)}$.  Then we define $K^X_\upsilon((b_0,\ldots,b_{|\upsilon|-1}),\vec a)$ to hold if:
\begin{itemize}
\item there is a $k\leq k_{\hat\rho^{\upsilon_0}_{\omega_0}(j_0)}$ with $p_\ell=p^k_{\hat \rho^{\upsilon_0}_{\omega_0}}$, $e_\ell=e^k_{\rho \rho^{\upsilon_0}_{\omega_0}(j_0)}$, and $\sigma=\sigma^k_{\hat\rho^{\upsilon_0}_{\omega_0}(j_0)}$,
\item $q'\sqsubseteq q_\ell$,
\item $(q_\ell\setminus q')\subseteq S$,
\item for each $x\in (q_\ell\setminus q')$, $(p_\ell)^+\prec x$,
\item $M_\tau^{X\oplus q_\ell}((f_{\rho^\upsilon_{\omega_0}(0)},\ldots,f_{\rho^\upsilon_{\omega_0}(|\tau^\upsilon_{\omega_0}|-1)},f_\ell),\vec a)$ holds.
\end{itemize}

These choices are made so that whenever $\Delta_{R;\upsilon}^X(c,b_0,\ldots,b_{|\upsilon_-|},\vec a_0,\vec a_{|\upsilon_-|})$ holds, we have:
\begin{itemize}
\item for each $\gamma$, $\Delta_{R^+;\sigma^\upsilon_\gamma}^{X\oplus p_{\pi^\upsilon_\gamma(|\sigma^\upsilon_\gamma|-1)}}(c,e_{\pi^\upsilon_\gamma(0)},\ldots,e_{\pi^\upsilon_\gamma(|\sigma^\upsilon_\gamma|-1)},\vec a_{\pi^\upsilon_\gamma(0)},\ldots,\vec a_{\pi^\upsilon_\gamma(|\sigma^\upsilon_\gamma|-1)})$ holds,
\item for each $\omega$, $\Delta_{R^-;\tau^\upsilon_\omega}^{X\oplus q_{\rho^\upsilon_\omega(|\tau^\upsilon_\omega|-1)}}(c,f_{\rho^\upsilon_\omega(0)},\ldots,f_{\rho^\upsilon_\omega (|\tau^\upsilon_\omega|-1)},\vec a_{\rho^\upsilon_\omega(0)},\ldots,\vec a_{\rho^\upsilon_\omega(|\tau^\upsilon_\omega|-1)})$ holds,
  
\item for each active $\omega$, $\Delta_{R^-;\hat\tau^\upsilon_\omega}^{X\oplus q_{\hat\rho^\upsilon_\omega(|\hat\tau^\upsilon_\omega|-1)}}(c,f_{\hat\rho^\upsilon_\omega(0)},\ldots,f_{\hat\rho^\upsilon_\omega(|\hat\tau^\upsilon_\omega|-1)},\vec a'_0,\ldots,\vec a'_{|\hat\tau^\upsilon_\omega|-1})$ for some $\vec a'_i\subseteq \vec a_{\hat\rho^\upsilon_\omega(i)}$,
\item for each active $\omega$, each $j<|\hat\tau^\upsilon_\omega|$, and each $k\leq k_{\hat\rho^\upsilon_\omega(j)}$,
  \[\Delta^{X\oplus p^k_{\hat\rho^\upsilon_\omega(j)}}_{R^+;\sigma^k_{\hat\rho^\upsilon_\omega(j)}}(c,e_{\pi^\upsilon_\gamma(0)},\ldots,e_{\pi^\upsilon_\gamma(|\sigma^\upsilon_\gamma|-1)},e^k_{\hat\rho^\upsilon_\omega(j)},\vec a_{\pi^\upsilon_\gamma(0)},\ldots,\vec a_{\pi^\upsilon_\gamma(|\sigma^\upsilon_\gamma|-1)},\vec a_{\hat\rho^\upsilon_\omega(j)})\]
holds.
\end{itemize}

Now suppose that there is some $\upsilon\in T$ so that $\Theta^X_{R;\upsilon}(c)$ holds.  We must find the needed extension of $(p,q,S)$.

First, suppose there is some $\gamma$ so that $\sigma^\upsilon_\gamma$ and $S_{\succ (p^\upsilon_{\pi^\upsilon_\gamma(|\sigma^\upsilon_\gamma|-1)})^+}$ is infinite.  Then $(p_{\pi^\upsilon_\gamma(|\sigma^\upsilon_\gamma|-1)},q, S_{\succ (p^\upsilon_{\pi^\upsilon_\gamma(|\sigma^\upsilon_\gamma|-1)})^+})$ witnesses $R^+$ on the chain side since $\Theta^{X\oplus p_{\pi^\upsilon_\gamma(|\sigma^\upsilon_\gamma|-1)}}_{R^+;\sigma^\upsilon_\gamma}(c)$ must hold.

Similarly, if there is an $\omega$ so that $\tau^\upsilon_\omega$ is a leaf and $\bigcap_{x\in q_{\rho^\upsilon_\omega}(|\tau^\upsilon_\omega|-1)}S_{\perp x}$ is infinite then $(p, q_{\rho^\upsilon_\omega}(|\tau^\upsilon_\omega|-1), \bigcap_{x\in q_{\rho^\upsilon_\omega}(|\tau^\upsilon_\omega|-1)}S_{\perp x})$ witnesses $R^-$ on the antichain side since $\Theta^{X\oplus q_{\rho^\upsilon_\omega}(|\tau^\upsilon_\omega|-1)}_{R^-;\tau^\upsilon_\omega}(c)$ holds.

So suppose that there is no such $\gamma$ and no such $\omega$.  We argue that some $\omega$ must be active.

First, consider any $\omega$ such that $\tau^\upsilon_\omega$ is a leaf.  Then $\bigcap_{x\in q_{\rho^\upsilon_\omega}(|\tau^\upsilon_\omega|-1)}S_{\perp x}$ is not infinite, so for cofinitely many $x$, there is an $i<|\tau^\upsilon_\omega|$ such that $(p_{\pi^\upsilon_{\omega\upharpoonright\overline{i}}(|\sigma^\upsilon_{\omega\upharpoonright\overline{i}}|-1)})^+\prec x$.  Since there are only finitely many such $i$, there is some single $i$ so that, taking $\gamma=\omega\upharpoonright\overline{i}$, $S_{\succ (p_{\pi^\upsilon_\gamma(|\sigma^\upsilon_\gamma|-1)})^+}$ is infinite, and therefore $\sigma^\upsilon_\gamma$ must not be a leaf.

We now look for an active $\omega$.  Consider the function $\omega_0$ which is constantly equal to $r$ (and therefore always relevant).  If $|\tau^\upsilon_{\omega_0}|=s$ then $\tau^\upsilon_{\omega_0}$ is a leaf, and therefore there is an $i<|\tau^\upsilon_{\omega_0}|$ so that $\sigma^\upsilon_{{\omega_0}\upharpoonright\overline{i}}$ is not a leaf.  But $|\sigma^\upsilon_{{\omega_0}\upharpoonright\overline{i}}|=\omega_0(i)=r$, which is a contradiction.

So $|\tau^\upsilon_{\omega_0}|<s$.  If $\omega_0$ is not active, it must be because $|\sigma^\upsilon_{{\omega_0}\upharpoonright\overline{|\tau^\upsilon_{\omega_0}|}}|<r-1$.  So consider $\omega_1$ given by setting $\omega_1(|\tau^\upsilon_{\omega_0}|)=|\sigma^\upsilon_{{\omega_0}\upharpoonright\overline{|\tau^\upsilon_{\omega_0}|}}|+1$ and $\omega_1(i)=\omega_0(i)=r$ for all other $s'$.

If $\omega_1$ is not active, it must be because $|\sigma^\upsilon_{{\omega_1}\upharpoonright\overline{|\tau^\upsilon_{\omega_1}|}}|<r-1=\omega_1(|\tau^\upsilon_{\omega_1}|)-1$, so we can find an $\omega_2$ by the same process.  The length $|\tau^\upsilon_{\omega_k}|$ decreases at each step, so we must eventually find an $\omega$ which is active.

Let $\gamma=\omega\upharpoonright\overline{|\tau^\upsilon_\omega|}$.  Let $q'=q_{\hat\rho^\upsilon_\omega(|\hat\tau^\upsilon_\omega|-1)}$ and let $p'=p_{\pi^\upsilon_\gamma(|\sigma^\upsilon_\gamma|-1)}$.

Suppose that $S'=\bigcap_{x\in q'}S_{\perp x}$ is cofinite, and let $S''$ consist of those $x$ such that there is a $p^*\in S$ with $p'\sqsubseteq p^*$, $(p^*)^+=x$, and such that there is an immediate extension $\sigma$ of $\sigma^\upsilon_\gamma$ in $T^+$ so that there exist witnesses to $K^{X\oplus p^*}_{T^+;\sigma}$.  If $S''$ is finite then $(p',q,S'\setminus S_{\perp x})$ witnesses $R^+$ on the chain side by satisfying $\Delta^{X\oplus p'}_{R^+;\sigma^\upsilon_\gamma}(c)$.

If $S''$ is infinite then $S''$ has an infinite computable subset $S^*$ and $(p,q',S^*)$ witnesses $R^-$ on the antichain side by satisfying $\Delta^{X\oplus q'}_{R^-;\hat\tau^\upsilon_\omega}(c)$.

Otherwise, suppose $S'$ is not cofinite.  Then there must be some $x\in q'$ so that $S_{\succ x}$ is infinite.  There is some $j<|\hat\tau^\upsilon_\omega|$ and some $k\leq k_{\hat\rho^\upsilon_\omega(j)}$ so that $(p^k_{\hat\rho^\upsilon_\omega(j)})^+=x$.  Then $(p,q_{\rho^\upsilon_\omega(|\tau^\upsilon_\omega|-1)},S_{\succ x})$ satisfies $R^-$ on the antichain side by satisfying $\Delta^{X\oplus q_{\rho^\upsilon_\omega(|\tau^\upsilon_\omega|-1)}}_{R^-;\tau^\upsilon_\omega}(c)$.
\end{proof}

\subsection{Solving $\mathbf{WKL}$}

We wish to show:
\begin{lemma}\label{thm:ads_sts_wkl}
  Suppose $c$ satisfies every \ADSSTS{} in $X$ and $U_e$ is an infinite, $\{0,1\}$-branching, $X$-computable tree.  Then there is an infinite path $\Lambda$ so that $c$ satisfies every \ADSSTS{} in $X\oplus\Lambda$.
\end{lemma}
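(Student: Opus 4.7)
The plan is to mirror the structure of Lemmas \ref{thm:ads_sts_ads} and \ref{thm:ads_sts_cac}, forcing with infinite subtrees of $U_e$. A \emph{condition} is a pair $(\sigma, V)$ where $\sigma \in U_e$ is a finite stem and $V$ is an infinite subtree of $U_e$ all of whose nodes are comparable with $\sigma$. I will allow $V$ to be $\Pi^0_1$ in $X$, so that the pruning in the non-leaf case below goes through; this is harmless because the $\Lambda$ we ultimately extract will be a genuine infinite branch of $U_e$. Extensions extend $\sigma$ along $V$ and/or shrink $V$; $(\sigma, V)$ \emph{forces} $R^*$ if every infinite path $\Lambda$ through $V$ makes $c$ satisfy $R^*$ in $X \oplus \Lambda$. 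As in the earlier proofs, it suffices to prove the density statement $(\ast)$: for every condition $(\sigma, V)$ and every requirement $R^*$, there is an extension forcing $R^*$. Iterating $(\ast)$ along an enumeration of all possible $X \oplus \Lambda$-oracle requirements (and always extending $\sigma$ by one step when possible, which is possible since $V$ is infinite) produces a descending sequence of conditions whose stems' union is the desired $\Lambda$.

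To prove $(\ast)$, fix $R^* = (T^*, \{L_\rho\}_{\rho \in T^*}, \{d^*_\rho\}_{\rho \in T^*})$ and build an $X$-computable \ADSSTS{} $R$ whose nodes $\upsilon \in T$ record progress made simultaneously along \emph{every} node of $V$ at a chosen level. Each non-root $\upsilon$ will carry a node $\rho^\upsilon \in T^*$ one step deeper than its parent's, and a level $n^\upsilon$ of $V$ strictly larger than its parent's; its auxiliary datum $b_{|\upsilon|-1}$ encodes, for each $\nu \in V$ of length $n^\upsilon$ extending $\sigma$, a witness $(b^\nu, \vec a^\nu)$ for $L_{\rho^\upsilon}^{X \oplus \nu}$ consistent with witnesses inherited from the parent. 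Because $U_e$ is $\{0,1\}$-branching, only finitely many such $\nu$ appear at any level, so $K_\upsilon$ is a legitimate simple block statement; the partition $d_\upsilon$ sends every $\vec a^\nu$ in this block to the color $d^*_{\rho^\upsilon}(|\rho^\upsilon|-1)$, the same across all $\nu$. Immediate extensions of $\upsilon$ in $T$ are indexed by choices of immediate extension of $\rho^\upsilon$ in $T^*$, so $T$ is finite with depth bounded by the depth of $T^*$.

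By hypothesis, some $\Theta^X_{R;\upsilon}(c)$ holds. If $\upsilon$ is a leaf, we have witnesses driving $R^*$ to a leaf $\rho^\upsilon$ along every $\nu \in V$ of length $n^\upsilon$; let $V'$ be the subtree of $V$ consisting of descendants of these level-$n^\upsilon$ nodes together with their initial segments. Then $(\sigma, V')$ is an $X$-computable extension forcing $R^*$. If $\upsilon$ is not a leaf, the $t$ provided by $\Theta$ certifies that no higher level of $V$ admits extending witnesses above $t$ at every node simultaneously: for every $n' > n^\upsilon$ there is some $\nu$ of length $n'$ with no $(b, \vec a)$ with $\vec a > t$ validating $L_{\rho^+}^{X \oplus \nu}$ for any immediate extension $\rho^+$ of $\rho^\upsilon$. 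Let $V'$ be the set of $\nu \in V$ extending $\sigma$ with no such witness. Since ``$\nu$ has a suitable witness'' is upward monotone along $V$ (adding bits to $\nu$ only increases computational power), $V'$ is downward closed past $n^\upsilon$ and hence a genuine subtree; it is $\Pi^0_1(X)$, and infinite by $\Theta$. Any infinite path $\Lambda$ through $V'$ then satisfies $\Theta^{X \oplus \Lambda}_{R^*; \rho^\upsilon}(c)$ with witnesses taken from any fixed $\nu \sqsubseteq \Lambda$ of length $n^\upsilon$, so $(\sigma, V')$ forces $R^*$.

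The main obstacle is verifying that $V'$ is infinite in the non-leaf case, and more broadly that the non-leaf alternative is strong enough to close the argument. The point is the \emph{uniformity} captured by the block statement $K_\upsilon$: the block at stage $\upsilon$ demands witnesses simultaneously at every node of $V$ of length $n^\upsilon$, so the non-extendability asserted by $\Theta$ rules out good witnesses at some \emph{single} node at every higher level --- exactly the statement needed for compactness of the $\{0,1\}$-branching tree $V$ to produce an infinite path of no-witness nodes. This uniformity, afforded precisely by finite branching, is what makes simultaneous preservation of $\mathbf{WKL}$ and the \ADSSTS{} framework compatible.
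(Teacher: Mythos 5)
There is a genuine gap in the non-leaf case, and it is exactly the difficulty that forces the paper's more elaborate bookkeeping. Your requirement $R$ insists that \emph{every} node $\nu\in V$ at level $n^{\upsilon}$ witness the \emph{same} node $\rho^{\upsilon}\in T^*$, and its children are indexed by a single immediate extension $\rho^+$ to be witnessed at all nodes of some higher level simultaneously. The failure clause in $\Theta^X_{R;\upsilon}(c)$ therefore only gives: for \emph{each} immediate extension $\rho^+$ separately and each level $n'>n^{\upsilon}$, \emph{some} node $\nu$ of length $n'$ lacks a witness for that particular $\rho^+$ above $t$. Your extraction of $V'$ silently swaps these quantifiers, taking a single $\nu$ bad for \emph{all} extensions $\rho^+$ at once. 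This does not follow: it is entirely possible that at every level every node witnesses some extension of $\rho^{\upsilon}$, but different nodes witness different (conflicting) extensions, so that no child of $\upsilon$ is ever satisfiable while your $V'$ is finite or empty. In that scenario your argument produces nothing, and note that the desired conclusion is not automatic there either: one still has to keep restraining witnesses so that they land in the correct sets $A^*_i(c)$, and branches heading toward different nodes of $T^*$ impose conflicting restraints $d^*$.

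This is precisely why the paper's construction does not demand a common target node: each branch $\lambda$ is assigned its own $r(\lambda)\in T$ with only the constraint $r(\lambda)\preceq\tau^{\upsilon}$ in the lexicographic order, the restraint follows the ``favorite'' (lexicographically largest witnessed) node $\tau^{\upsilon}$, and a second kind of extension node $(1,\tau')$ allows backtracking to a smaller favorite when all branches witnessing $\tau^{\upsilon}$ die out; finiteness of the resulting tree and the transitivity bookkeeping (needed for Lemma \ref{thm:ads_sts_wkl_gen}) are then verified for this mechanism. Your proposal is missing this idea, and patching it is not a local fix. A secondary problem: you allow the reservoir $V$ to be merely $\Pi^0_1(X)$, but the block statements of the next requirement must be $X$-computable, and ``for every $\nu\in V$ of length $n$'' over a $\Pi^0_1(X)$ tree is not; the paper avoids this by always passing to an infinite $X$-computable subtree $U'$ (with ``$\lambda$ finds a witness'' read in the usual bounded, computable sense), so at the next iteration the quantification over branches stays computable.
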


We will need variants of this repeatedly, so we state and prove a mild generalization, essentially showing that the same holds if we place various restrictions on the kinds of \ADSSTS{}s we wish to deal with.

\begin{lemma}\label{thm:ads_sts_wkl_gen}
  Let $J\subseteq I\subseteq\mathbb{N}$ be given with $0\in I\setminus J$.  Suppose $c$ satisfies every \ADSSTS{} in $X$ with range $I$ which is transitive in every $j\in J$ and $U_e$ is an infinite, $\{0,1\}$-branching, $X$-computable tree.  Then there is an infinite path $\Lambda$ so that $c$ satisfies every \ADSSTS{} in $X\oplus\Lambda$ with range $I$ which is transitive in every $j\in J$.
\end{lemma}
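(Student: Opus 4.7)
The plan is to build $\Lambda$ by forcing with $X$-computable infinite subtrees of $U$, so that for each requirement in the allowed class there is a condition whose every path satisfies the requirement. By Lemma \ref{thm:linearize} (using $0 \in I \setminus J$), it suffices to force each \emph{linear} \ADSSTS{} with range $I$ transitive in $J$; linearity is essential, because it ensures that any non-leaf node $\sigma$ has a uniquely determined immediate extension $\tau$. The core step is: given an $X$-computable infinite $U' \subseteq U$ and a linear $R = (T,\{K_\sigma\},\{d_\sigma\})$ in our class, produce an $X$-computable infinite subtree $U'' \subseteq U'$ so that $c$ satisfies $R$ in $X \oplus \Lambda$ for every path $\Lambda$ through $U''$. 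Iterating this over an enumeration of such requirements while extending a designated node $\beta_k \in U_k$ at each stage yields $\Lambda = \bigcup_k \beta_k$.

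To produce $U''$, define a derived \ADSSTS{} $R^*$ on $X$ alone, with the same tree $T$ and the same color function $d^*_\sigma = d_\sigma$ (so $R^*$ has range $I$ and is transitive in $J$, and is linear as $R$ is). The auxiliary datum of $R^*$ at position $i$ is a pair $(n_i,(b^\beta_i)_{\beta \in U'\upharpoonright n_i})$ with $n_0 < \cdots < n_{\ell-1}$, and $K^*_\sigma((b^*_0,\ldots,b^*_{\ell-1}),\vec a)$ holds iff for every $\beta \in U'\upharpoonright n_{\ell-1}$ there exists $\vec a'\subseteq \vec a$ with $K^{X\oplus\beta}_\sigma((b^{\beta\upharpoonright n_0}_0,\ldots,b^{\beta\upharpoonright n_{\ell-1}}_{\ell-1}),\vec a')$. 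The sub-selection $\vec a'\subseteq\vec a$ (analogous to the device in Lemma \ref{thm:ads_sts_ads}) is what will let branch-dependent witnesses be consolidated into a single $\vec a$.

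By hypothesis, $c$ satisfies $R^*$ in $X$, so $\Theta^X_{R^*;\sigma}(c)$ holds for some $\sigma \in T$. If $\sigma$ is a leaf, the uniform witnesses at level $n_{\ell-1}$ immediately give $\Theta^{X\oplus\Lambda}_{R;\sigma}(c)$ for every path $\Lambda$ through $U'$, and $U''=U'$ works. If $\sigma$ is a non-leaf, let $\tau$ be its unique child, fix the threshold $t$ from $\Theta^X_{R^*;\sigma}$, and let $U''$ consist of those $\beta \in U'$ with $|\beta| \geq n_{\ell-1}$ such that no $b$ and no $\vec a \in (t,|\beta|]$, verified within $|\beta|$ stages, yield $K^{X\oplus\beta}_\tau$ extending the current witnesses. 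This is an $X$-computable subtree, closed under restriction above $n_{\ell-1}$. Suppose $U''$ were finite; then for some level $N$, every $\beta \in U'\upharpoonright N$ admits a local extension $(b_\beta,\vec a_\beta)$ with $\vec a_\beta > t$. Taking $\vec a = \bigcup_\beta \vec a_\beta$, the linearity of $R$ makes $\tau$ the same on every branch, and $\vec a_\beta \subseteq \vec a$ together with the sub-selection in $K^*_\tau$ yields uniform witnesses to $\Delta^X_{R^*;\tau}(c)$ with $\vec a > t$, contradicting $\Theta^X_{R^*;\sigma}$. Hence $U''$ is infinite, and on every path $\Lambda$ through $U''$, $\Theta^{X\oplus\Lambda}_{R;\sigma}(c)$ holds.

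The main obstacle is the non-leaf case, which rests on two design choices working in tandem: the linearity of $R$, which forces $\tau$ to be the same on every branch and so permits a single contradiction, and the sub-selection of $\vec a'\subseteq\vec a$ in $K^*$, which lets one $\vec a$ serve all branches at once. Without either, the compactness argument contradicting $\Theta^X_{R^*;\sigma}$ would collapse, since the branch-dependent witnesses $\vec a_\beta$ and putative $\tau_\beta$ could not be unified into a single uniform extension.
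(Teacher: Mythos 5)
Your argument is correct, but it takes a genuinely different route from the one in the paper. The paper does not linearize: it works with an arbitrary tree-shaped requirement $R$, and because different branches of $U_e$ may witness different same-length nodes of $T$ with conflicting $d_\sigma$, its derived requirement lives on a new tree $T'$ with two kinds of extensions --- ``advance'' nodes and ``backtrack'' nodes that switch to a lexicographically smaller favorite when the branches witnessing the current favorite die out --- and it must then separately verify that the derived coloring $d'_\upsilon$ still has range $I$ and is transitive in every $j\in J$. You instead invoke Lemma \ref{thm:linearize} (which the paper proves but does not actually use in this argument) to reduce to linear requirements; since a linear $T$ has a unique node of each length, all branches can be required to progress in lockstep on the same node, the favorite-selection and backtracking machinery disappears, and your derived requirement $R^*$ can keep the very same $(T,\{d_\sigma\})$, so membership in the allowed class (range $I$, transitivity in $J$) is inherited for free rather than checked by hand. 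The trade-offs: your route buys a substantially simpler derived requirement and a one-line transitivity argument, at the cost of routing through the linearization lemma and of block statements that quantify over all nodes of $U'$ at a level (which is also present, in filtered form, in the paper's proof). Two small points of care: your compactness step correctly exploits that the threshold $t$ from $\Theta^X_{R^*;\sigma}$ is built into the definition of $U''$, so a finite $U''$ yields a single uniform extension witness with $\vec a>t$ contradicting $\Theta^X_{R^*;\sigma}$; and in the iteration you should either maintain the paper's discipline of conditions ``(stem, infinite $X$-computable tree of extensions of the stem)'' or appeal to compactness of the nested infinite subtrees of the $\{0,1\}$-branching tree, since as literally written ``extending a designated node $\beta_k\in U_k$'' is not guaranteed to be possible inside $U_{k+1}$ (the new tree need not contain infinitely many extensions of the old stem); with either standard fix the construction of $\Lambda$ goes through.
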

Then Lemma \ref{thm:ads_sts_wkl} is the case with $I=\mathbb{N}$ and $J=\emptyset$.
\begin{proof}
  By Lemma \ref{thm:linearize}, it suffices to show that for any linear requirement $R=(T,\{K_\sigma\},\{d_\sigma\})$, we can find an initial segment $\lambda\in U_e$ and an infinite $X$-computable $U'\subseteq U_e$ of extensions of $\lambda$ so that whenever $\Lambda$ is a branch through $U_e$, $c$ satisfies $R$ in $X\oplus\Lambda$.

  We will describe a requirement $R'=(T',\{L_\sigma\},\{d'_\sigma\})$ with range $I$ which is transitive in every $j\in J$.  $R'$ will share the same tree, $T'=T$.

  The auxiliary datum $b_i$ will have the form $(s_i,k_i,b^1_i,\ldots,b^{k_i}_i)$ where $s_i$ is a suitable bound, $k_i$ is the number of branches we need to consider, and the $b^j_i$ are the corresponding data for $K^X_\upsilon$.

  $L^X_\upsilon((b_0,\ldots,b_{|\upsilon|-1}),\vec a)$ will hold if, for every $\lambda\in U_e$ with $|\lambda|=s_i$, there is a sequence $j_i\leq k_i$ so that $K^{X\oplus\lambda}_\upsilon((b^{j_0}_0,\ldots,b^{j_{|\upsilon|-1}}_{|\upsilon|-1}),\vec a)$ holds.
  
  This means that when $\Delta^X_{R';\upsilon}$ holds, each $\lambda$ satisfies $\Delta^{X\oplus\lambda}_{R;\upsilon}$.

  Naturally we have $d'_\upsilon=d_\upsilon$, which ensures that $d'_\upsilon$ is transitive.

  We must check that satisfaction of our requirement ensures that we can choose a $\lambda$ forcing satisfaction of the original requirement.  Suppose we satisfy $\Theta^X_{R';\upsilon}(c)$.  Consider the tree $U''\subseteq U_e$ consisting of those $\lambda'$ such that $\Delta^{X\oplus\lambda'}_\upsilon$ holds but we cannot find witnesses to $\Delta^{X\oplus\lambda'}_{\upsilon^\frown\langle0\rangle}$ which extend the fixed witnesses to $\Delta^{X\oplus\lambda'}_\upsilon$.  If $U''$ were finite then we would satisfy $\Delta^X_{R';\upsilon^\frown\langle 0\rangle}$, so $U''$ is infinite, and there must be some $\lambda$ satsifying $\Delta^{X\oplus\lambda}_\upsilon$ with infinitely many extensions in $U''$.  Letting $U'\subseteq U''$ consist of the extensions of $\lambda$, we have forced $\Theta^{X\oplus\Lambda}_{R;\upsilon}(c)$.
\end{proof}

\subsection{Constructing $\mathbf{STS}$(2)}

\begin{lemma}\label{thm:ads_sts_exists}
  There is a computable stable $c:[\mathbb{N}]^2\rightarrow\mathbb{N}$ satisfying all \ADSSTS{}s in $\emptyset$.
\end{lemma}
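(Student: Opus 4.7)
The plan is to build $c$ by an effective stage-by-stage construction that incrementally commits natural numbers to classes $A^*_i$, with $c(n,m)$ determined by the commitment status of $n$ at stage $\max(n,m)$. Fix a computable enumeration $R_1,R_2,\ldots$ of all \ADSSTS{}s in $\emptyset$ (given by finite trees with computable indices for the $K_\sigma$ and finite functions $d_\sigma$).

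The first and principal step is a purely combinatorial lemma, depending only on a requirement $R=(T,\{K_\sigma\},\{d_\sigma\})$ and not on $c$: for any threshold $T_0\in\mathbb{N}$ there exist $\sigma\in T$ and witnesses $b_0,\ldots,b_{|\sigma|-1},\vec a_0,\ldots,\vec a_{|\sigma|-1}$ with every entry of every $\vec a_i$ exceeding $T_0$, such that $\Delta^\emptyset_{R;\sigma}$ holds and, if $\sigma$ is not a leaf, there is a $t$ so that no immediate extension $\tau$ of $\sigma$ admits further witnesses $b,\vec a$ with $t<\vec a$ and $\Delta^\emptyset_{R;\tau}$ holding. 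This is proved by greedy extension: starting from $\sigma=\langle\rangle$ with empty witnesses, repeatedly pick any immediate child $\tau$ and witnesses above the current threshold for which $K_\tau$ holds, then update the threshold past the new witnesses. Since $T$ is finite, this process terminates within $|T|$ steps, either at a leaf or at a node for which no further extension survives above the current threshold.

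For the construction proper, at stage $s$, for each $r\leq s$, maintain a state $(\sigma_r^s,\mathrm{wits}_r^s,t_r^s)$, with $t_r^s$ exceeding the largest element committed so far by any requirement. Run a bounded search of cost $s$ for an immediate extension of $\sigma_r^s$ satisfying the above criterion; if one is found, commit each entry of the new $\vec a$ to the class $A^*_{d_\tau(|\sigma_r^s|)}$, shifting to fresh witnesses above all other currently committed elements if necessary to avoid conflict. Then for every pair $(n,m)$ with $\max(n,m)=s$, set $c(n,m)=i$ if $n$ has been committed to $A^*_i$ by the end of stage $s$, and $c(n,m)=0$ otherwise.

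For verification, $c$ is computable since each stage does only finite work, and it is stable because once $n$ is committed to $A^*_i$ the value $c(n,m)$ equals $i$ for every $m$ past that stage, so $n\in A^*_i(c)$. For each $R_r$, finiteness of $T_r$ guarantees the state $\sigma_r^s$ stabilizes to some $\sigma_r$ with final witnesses $\mathrm{wits}_r$; because the search bounds grow without bound, failure to ever extend $\sigma_r$ after stabilization means no extension with witnesses above $t_r$ actually exists, matching the non-extension clause of $\Theta^\emptyset_{R_r;\sigma_r}(c)$ when $\sigma_r$ is not a leaf. Combined with $\Delta^\emptyset_{R_r;\sigma_r}$ and the fact that the commitments place each entry of $\vec a_i$ into $A^*_{d_{\sigma_r}(i)}(c)$, this gives $\Theta^\emptyset_{R_r;\sigma_r}(c)$, so $c$ satisfies every \ADSSTS{}. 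The main obstacle is showing that the interaction of many requirements simultaneously committing witnesses at varying stages remains compatible with the $\Theta$-conditions of every requirement in the limit; this is handled by always selecting fresh witnesses above all current commitments, exploiting the slack provided by the lemma's ``for any threshold $T_0$'' clause.
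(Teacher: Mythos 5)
There is a genuine gap, and it is exactly the point you flag as ``the main obstacle'': your construction has no priority/injury mechanism, and the substitute rule ``always select fresh witnesses above all current commitments'' is incompatible with verifying the negative clause of $\Theta$. To conclude, once $R_r$ stabilizes at a non-leaf node $\sigma_r$, that \emph{there is a single $t$} such that no immediate extension admits witnesses $\vec a>t$, you need $R_r$ to have been permanently entitled to act on any witnesses above one \emph{fixed} threshold. But in your scheme $t_r^s$ majorizes every element committed so far by \emph{any} requirement, and since infinitely many requirements enter and act over time, this bound tends to infinity; the fact that the stage-$s$ bounded search finds nothing above the stage-$s$ bound, for every $s$, does not yield any single $t$ witnessing non-extendability, so $\Theta_{R_r;\sigma_r}(c)$ can simply fail (with no leaf reached either) even though your intended dichotomy was supposed to cover this case. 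If instead you freeze $t_r$ at its value when $R_r$ last acted, then witnesses appearing above $t_r$ may contain numbers already committed by other requirements to different classes $A^*_j$, and ``shifting to fresh witnesses'' is not available: witnesses are not freely choosable, they must satisfy the fixed computable predicates $K_\tau$ with the already-fixed data $b_0,\ldots$, and there need be no admissible tuple clearing the other requirements' commitments. The paper resolves precisely this tension by a finite-injury priority argument: each requirement only respects the thresholds of the finitely many higher-priority requirements (which eventually stabilize), and when it acts it cancels all lower-priority commitments; finiteness of each tree then gives stabilization, consistency of the restraint sets, stability of $c$, and the negative clause relative to the stabilized threshold. Your greedy single-requirement lemma is correct, but its ``for any threshold $T_0$'' slack concerns one requirement in isolation and does not address the dynamic interaction in which the bound a requirement must clear is pushed up cofinally by others.

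A secondary issue: when $R_r$ passes from $\sigma_r^s$ to a child $\tau$ you commit only the new block to $A^*_{d_\tau(|\sigma_r^s|)}$. Nothing in the definition of a requirement forces $d_\tau$ to agree with $d_{\sigma_r^s}$ on earlier coordinates (the later constructions in the paper rely on exactly this freedom, e.g.\ deactivating earlier blocks to color $0$), while $\Theta$ at the final node demands $\vec a_i\in A^*_{d_{\sigma_r}(i)}(c)$ for \emph{all} $i$ with the final node's $d$. So the older blocks must be re-restrained each time the node advances, as in the paper's construction (``take $A_{s+1,r,j}$ to consist of those $\vec a_{s+1,r,i}$ with $d_\upsilon(i)=j$''), and one must check this re-coloring is consistent with the other requirements' restraints and with stability---which again is what the disjointness conditions plus injury provide.
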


Again, we prove a more general version that will include later cases.
\begin{lemma}\label{thm:ads_sts_exists_gen}
  Let $J\subseteq I\subseteq\mathbb{N}$ with $0\in I\setminus J$.    There is a computable stable $c:[\mathbb{N}]^2\rightarrow I$ transitive in every color in $J$ and satisfying all \ADSSTS{}s in $\emptyset$ with range $I$ which are transitive in every color in $J$.
\end{lemma}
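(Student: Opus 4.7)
I will take $c(n,m)=f(n)$ for $n<m$, where $f\colon\mathbb{N}\to I$ is a computable function built in stages. With this shape, $c$ is automatically stable (so that $A^*_i(c)=f^{-1}(i)$) and transitive in every color, so it suffices to build a computable $f$ so that $c$ satisfies every requirement in the c.e.\ list $R_0,R_1,\ldots$ of valid requirements with range $I$ that are transitive in every $j\in J$.

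\emph{Single-requirement strategy.} For a fixed $R=(T,\{K_\sigma\},\{d_\sigma\})$ the plan is to walk $T$ greedily from $\langle\rangle$. At the current node $\sigma$ I maintain committed data $(b_i,\vec a_i)_{i<|\sigma|}$ with $\Delta^\emptyset_{R;\sigma}$ holding and $\vec a_i\subseteq f^{-1}(d_\sigma(i))$, together with a threshold $M$ above every committed integer. At each stage I search computably for an immediate extension $\tau$ of $\sigma$ and data $(b,\vec a)$ with $\min\vec a>M$, with $K^\emptyset_\tau((b_0,\ldots,b_{|\sigma|-1},b),\vec a)$ holding, and with $\vec a$ disjoint from all committed integers. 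When found I commit $f(n)=d_\tau(|\sigma|)$ for $n\in\vec a$ and advance $\sigma\to\tau$. Because $T$ is finite, either the walk reaches a leaf (in which case the committed data directly witness $\Theta^\emptyset_{R;\sigma}(c)$), or it halts at some non-leaf $\sigma$; in the latter case completeness of the $\Sigma^0_1$ search implies that no such $(b,\vec a)$ exists for any child, and since the no-extension clause of $\Theta$ at a non-leaf does not require $\vec a\in A^*$, the value $t=M$ witnesses $\Theta^\emptyset_{R;\sigma}(c)$.

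\emph{Priority argument.} To handle all requirements at once I run the above for each $R_e$ in parallel under a standard finite-injury ordering by index. Each $R_e$'s working threshold is kept above the maximum integer touched by any higher-priority commitment; whenever a higher-priority strategy commits an integer to a color different from that expected by a committed witness of $R_e$, the lower-priority $R_e$ is injured and restarts at $\langle\rangle$ with a larger threshold. After every $R_e$ has stabilized, any still-uncommitted integer receives $f(n)=0$ (legal because $0\in I$); this default is harmless, since any $\vec a\subseteq f^{-1}(0)$ that would satisfy some $K_\tau$ above the final threshold would already have been located by the open-ended search.

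The main obstacle is verifying the no-extension clause at the stabilized non-leaf $\sigma_e$ of each $R_e$. After $R_e$ stabilizes, the set of higher-priority commitments is fixed and finite; write $\hat M_e$ for its maximum. Then any hypothetical $(b,\vec a)$ with $\min\vec a>\hat M_e$ and $K^\emptyset_\tau$ holding for some child $\tau$ would have all of $\vec a$'s elements uncommitted (hence ``good'' for advancing $R_e$), so the stabilized search would have found it and advanced $R_e$, contradicting stabilization. Hence no such triple exists and $t=\hat M_e$ witnesses $\Theta^\emptyset_{R_e;\sigma_e}(c)$. The remaining verification — that each $R_e$ stabilizes after finitely many injuries and that the resulting $f$ is computable — is the usual finite-injury bookkeeping.
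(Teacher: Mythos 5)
Your opening ansatz $c(n,m)=f(n)$ with $f$ computable is already fatal, before any of the priority machinery starts. It does make $c$ stable and transitive in every color, but it can never satisfy all the requirements when $|I|\geq 2$ (which is every case the lemma is used for: $I=\mathbb{N}$, $I=\{0,1,2\}$, $I=\{0,1\}$). Indeed, the classes $A^*_i(c)=f^{-1}(i)$ would then form a partition of $\mathbb{N}$ into computable sets. Fix $i_0\in I$: if $f^{-1}(i_0)$ is coinfinite, then $B=\mathbb{N}\setminus f^{-1}(i_0)$ is an infinite computable set on which $c$ omits the color $i_0$; if $f^{-1}(i_0)$ is cofinite, then $B=f^{-1}(i_0)$ is an infinite computable set on which $c$ omits every other color of $I$. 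Either way there is an infinite computable $B$ and an $i\in I$ with no pair from $B$ colored $i$, and then the single-node requirement from the proof of Lemma \ref{thm:ads_sts_diat} (take $W_e=B$, $K_{\langle 0\rangle}(b,x)$ iff $x\in W_{e,b}$, $d_{\langle 0\rangle}(0)=i$; it has range contained in $I$ and is vacuously transitive in every color) is not satisfied by your $c$ in $\emptyset$: $\Theta_{R;\langle\rangle}$ fails because $B$ is infinite, and $\Theta_{R;\langle 0\rangle}$ fails because $B\cap A^*_i(c)=\emptyset$. So no computable $f$, however cleverly built, can work: satisfaction of all requirements forces each class to meet every infinite computable set, which is impossible for a computable partition with more than one piece. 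This is precisely why the paper does not fix the limit color of $n$ at a computable stage: it defines $c$ column by column, choosing $c_{s+1}(n,s+1)$ from approximations $A_{s,r,i}$ that may move $n$ between classes finitely often, so the classes $A^*_i(c)$ are only limit-computable, late-discovered witnesses can still be restrained into the demanded class, and transitivity is kept through the closure condition imposed on the $A_{s,r,i}$.

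The same tension also surfaces inside your construction on its own terms. For $f$ to be computable you must decide $f(n)$ at a stage computable from $n$, but your default rule fires ``after every $R_e$ has stabilized,'' which is not a computable event, so the $f$ you describe is not computable. If instead you default early, then witnesses found later by the unbounded searches (e.g.\ numbers enumerated late into some $W_e$) are already committed to $0$; the strategy can then neither place them into $A^*_{d_\tau(|\sigma|)}(c)$ nor fall back on the no-extension clause, because that clause in $\Theta_{R;\sigma}$ quantifies over all $(b,\vec a)$ with $\vec a>t$, not only over uncommitted ones — your final-paragraph claim that any such $\vec a$ above $\hat M_e$ must be uncommitted is exactly what the default assignments destroy.
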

Again, Lemma \ref{thm:ads_sts_exists} is the case with $J=\emptyset$ and $I=\mathbb{N}$.
\begin{proof}
This is a standard finite injury priority argument.  Informally, we place all \ADSSTS{}s with range $I$ transitive in every color in $J$ in order, and every time we find witnesses violating a negative requirement component, we remember the witnesses, restrain them so future colors comply with the corresponding positive requirement component, and injure all lower priority requirements; that requirement is then witnessed along a longer branch $\upsilon$.  Since each requirement has a finite tree, each requirement eventually stops acting, either because some negative requirement component holds or because we reach a leaf.

More formally, we proceed as follows.  We order the \ADSSTS{}s $R_0,R_1,\ldots$.  At each stage $s$ we have fixed:
\begin{itemize}
\item $c_s:[s]^2\rightarrow I$ transitive in each color in $J$,
\item for $r<s$, $\upsilon_{s,r}\in T_r$, $b_{s,r,0},\ldots,b_{s,r,|\upsilon_{s,r}|-1}$, $\vec a_{s,r,0},\ldots,\vec a_{s,r,|\upsilon_{s,r}|-1}$, $t_{s,r}$, and sets $A_{s,r,j}$ so that:
  \begin{itemize}
  \item for each $i<|\upsilon_{s,r}|$, $K_{\upsilon_{s,r}\upharpoonright(k+1)}((b_{s,r,0},\ldots,b_{r,i}),\vec a_{s,r,i})$,
  \item if $j\neq j'$ then $A_{s,r,j}\cap A_{s,r',j'}=\emptyset$,
  \item each $\vec a_{s,r,i}\in A_{s,r,d_{\upsilon_{s,r}}(i)}$,
  \item if $r'<r$ then $t_{s,r'}\leq t_{s,r}$ and $t_{s,r'}<\vec a_{s,r,i}$,
  \item if $b\in A_{s,r,i}$, $i\in J$, $a<b$, and $c(a,b)=i$ then $a\in A_{s,r,i}$.
  \end{itemize}
 \end{itemize}
We will have $c_s\subseteq c_{s+1}$.  The sets $\bigcup_{r\leq s}A_{s,r,i}$ are approximations to $A^*_i(c)$.  If $a\not\in\bigcup_{r\leq s}\bigcup_i A_{s,r,i}$, we will treat $a$ as if it belongs to some $A_{s,r,0}$.

Suppose we have constructed up to stage $s$.  Define $c_{s+1}(n,s+1)$ for $n<s+1$ by setting $c_{s+1}(n,s+1)=i$ if $n\in A_{s,r,i}$ for some $r$.  (The closure condition on $A_{s,r,i}$ ensures transitivity of $c$.)  Let $r< s$ be least (if there is any) so that there is some $b$, some $\vec a\in(t_{s,r},s+1)$, and some $\upsilon$ an immediate extension of $\upsilon_{s,r}$ in $T_r$ so that $K_{\upsilon}((b_{s,r,0},\ldots,b_{s,r,|\upsilon_{s,r}|-1},b),\vec a)$ holds; otherwise $r=s$.  For $r'<r$, we have $\upsilon_{s+1,r'}=\upsilon_{s,r'}$, $b_{s+1,r',i}=b_{s,r',i}$, $\vec a_{s+1,r',i}=\vec a_{s,r',i}$, $t_{s+1,r'}=t_{s,r'}$, and $A_{s+1,r',i}=A_{s,r,i}$.

If $r<s$, let $\upsilon_{s+1,r}=\upsilon$, $b_{s+1,r,|\upsilon|-1}=b$, $b_{s+1,r,i}=b_{s,r,i}$, $\vec a_{s+1,r,|\upsilon|-1}=\vec a$, $\vec a_{s+1,r,i}=\vec a_{s,r,i}$, and $t_{s+1,r}=s+1$.  Take $A_{s+1,r,j}$ to consist of those $\vec a_{s+1,r,i}$ with $d_{\upsilon}(i)=j$, together with any elements required by the closure condition.  Note that if $a\in A_{s+1,r',j}$ for some $r'<r$ then $c_s(a,b)=j$ for any $b>t_{s,r'}$, so in particular any $\vec a_{s+1,r,i}$, so if $b\in A_{s+1,r,j}$, there is no conflict with having $a\in A_{s+1,r,j}$ as well.

For $r'\in (r,s]$ (or $r'=s$ if $r=s$), set $\upsilon_{s+1,r'}=\langle\rangle$, $A_{s+1,r',i}=\emptyset$, $t_{s+1,r'}=s+1$, and $A_{s+1,r',j}=\emptyset$.


We only injure a requirement $R_j$ if we make the node $\upsilon_{s,j'}$ longer for some $j'<j$, so a requirement is injured only finitely many times.  In particular, there is a limiting node $\upsilon_{j}=\lim_s \upsilon_{s,j}$.  The witnesses $b_{s,j,0},\ldots,b_{s,j,|\upsilon_j|-1}$ and $\vec a_{s,j,0},\ldots,\vec a_{s,j,|\upsilon_j|-1}$ also stabilize to witnesses $b_{j,0},\ldots,b_{j,|\upsilon_j|-1}$ and $\vec a_{j,0},\ldots,\vec a_{j,|\upsilon_j|-1}$.  In particular, these witness $\Delta_{R_j;\upsilon_j}(c)$.  Furthermore, if $\upsilon_j$ is not a leaf, $t_{s,j}$ stabilizes to some $t_j$ larger than any witness to any lower priority requirement, and there do not exist $b, \vec a$ and $\upsilon$ extending $\upsilon_j$ with $\vec a>t_j$ so that $K_{R_j;\upsilon}((b_0,\ldots,b_{|\upsilon|-1}),\vec a)$, since if there were, we would have taken $\upsilon_{s,j}=\upsilon$ at some stage, so $\Theta_{R_j;\upsilon_j}(c)$ holds.

Finally, we check that $c$ is stable; it suffices to show that for each $n$, there is some $s,i$ such that for all $s'\geq s$, $n\in A_{s,i}$.  But $n$ can only be moved from one $A_i$ to another when some requirement $\leq n$ acts, which only happens finitely many times.
\end{proof}

\section{Separating $\mathbf{SProdWQO}$}

\subsection{Separating from $\mathbf{ADS}$}

In this section we construct a computable instance $c$ of $\mathbf{SProdWQO}$ (and, a fortiori, of $\mathbf{SCAC}$) and a Turing ideal $\mathcal{I}$ which has no solution to $c$, but does satisfy both $\mathbf{trRT^2_k}$ for all $k$ and $\mathbf{WKL}$.

\begin{definition}
  An \emph{\SPROD} is a \ADSSTS{} $R=(T,\{K_\alpha\}_{\sigma\in T},\{d_\sigma\}_{\sigma\in T})$ with range $\{0,1,2\}$ transitive in both colors $1$ and $2$.
\end{definition}

Lemmata \ref{thm:ads_sts_diat}, \ref{thm:ads_sts_wkl_gen} and \ref{thm:ads_sts_exists_gen} apply with $J=\{1,2\}$, $I=\{0,1,2\}$, so we have:
\begin{lemma}
  If $c$ satisfies all \SPROD{}s in $X$ then whenever $B$ is an $X$-computable infinite set, there exist $a,b,c,d\in B$ with $c(a,b)=1$ and $c(c,d)=2$.
\end{lemma}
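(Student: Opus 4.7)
The plan is to apply Lemma \ref{thm:ads_sts_diat} directly, using the parameters $I=\{0,1,2\}$ and $J=\{1,2\}$. Note that $0 \in I \setminus J$, as required by the hypothesis of that lemma. By the definition of an \SPROD{} just given, an \SPROD{} is precisely a \ADSSTS{} with range $I=\{0,1,2\}$ that is transitive in every color in $J=\{1,2\}$. Hence the hypothesis that $c$ satisfies every \SPROD{} in $X$ is exactly the hypothesis of Lemma \ref{thm:ads_sts_diat} for this choice of $I$ and $J$.

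The conclusion of Lemma \ref{thm:ads_sts_diat} therefore gives, for any $X$-computable infinite set $B$, that $c\upharpoonright[B]^2 \supseteq I = \{0,1,2\}$. In particular the colors $1$ and $2$ are both attained by $c$ on pairs from $B$, so we may choose $a,b \in B$ with $c(a,b) = 1$ and, independently, $c,d \in B$ with $c(c,d) = 2$. This is the desired conclusion, so no additional argument is needed beyond recognizing the correspondence between the two definitions and citing the earlier lemma. Since the lemma is a direct specialization of an already-established result, there is no real obstacle; the only thing to check carefully is that $0 \in I \setminus J$ (so that the hypothesis of Lemma \ref{thm:ads_sts_diat} is met) and that $\{1,2\} \subseteq I$ (so that the conclusion actually yields the two colors we want).
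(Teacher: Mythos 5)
Your proposal is correct and is exactly the paper's argument: the paper also obtains this lemma by specializing Lemma \ref{thm:ads_sts_diat} to $I=\{0,1,2\}$, $J=\{1,2\}$, since an \SPROD{} is by definition a \ADSSTS{} with that range and transitivity. Nothing further is needed.
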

\begin{lemma}
  If $c$ satisfies all \SPROD{}s in $X$ and $U$ is an infinite $X$-computable $\{0,1\}$-branching tree then there is an infinite branch $\Lambda$ so that $c$ satisfies all \SPROD{}s in $X\oplus\Lambda$.
\end{lemma}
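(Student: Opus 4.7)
The plan is to invoke Lemma \ref{thm:ads_sts_wkl_gen} with $I=\{0,1,2\}$ and $J=\{1,2\}$. By definition, a \SPROD{} is exactly a \ADSSTS{} whose range is contained in $I = \{0,1,2\}$ and which is transitive in each of the colors $1$ and $2$, i.e.\ in every color in $J$. The side condition $0 \in I \setminus J$ is satisfied, so this pair $(I,J)$ is a legal input to Lemma \ref{thm:ads_sts_wkl_gen}.

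Given this identification, the hypothesis that $c$ satisfies every \SPROD{} in $X$ is literally the hypothesis of Lemma \ref{thm:ads_sts_wkl_gen} for this choice of $(I,J)$. The conclusion of that lemma then yields an infinite branch $\Lambda$ through $U$ such that $c$ satisfies every \ADSSTS{} in $X \oplus \Lambda$ with range $I$ transitive in every color in $J$, which is precisely to say that $c$ satisfies every \SPROD{} in $X \oplus \Lambda$.

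There is no genuine obstacle beyond recognizing that the abstract lemma was phrased exactly to cover this case. The substantive work is all inside Lemma \ref{thm:ads_sts_wkl_gen}: the construction of the derived tree $T'$ with its extensions of the two types $(0,\tau')$ and $(1,\tau')$, and in particular the verification that the assigned $d'_\upsilon$ inherits the transitivity-in-$J$ property from $d_{\tau^\upsilon}$. This verification is what ensures the construction produces a legitimate \SPROD{} rather than a general \ADSSTS{}, and therefore what lets the satisfaction hypothesis be applied. No additional tailoring is required for the specific pair $(\{0,1,2\},\{1,2\})$.
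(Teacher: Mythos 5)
Your proposal is correct and matches the paper's own treatment exactly: the paper derives this lemma by applying Lemma \ref{thm:ads_sts_wkl_gen} with $I=\{0,1,2\}$ and $J=\{1,2\}$, noting that an \SPROD{} is precisely a \ADSSTS{} with that range and transitivity pattern. No further argument is given or needed.
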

\begin{lemma}
  There is a computable stable $c:[\mathbb{N}]^2\rightarrow\{0,1,2\}$ transitive in the colors $1$ and $2$ satisfying every \SPROD{} in $\emptyset$.
\end{lemma}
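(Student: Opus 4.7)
The plan is to observe that this lemma is simply the specialization of Lemma~\ref{thm:ads_sts_exists_gen} to the parameters $I=\{0,1,2\}$ and $J=\{1,2\}$. By the definition of \SPROD{} given at the start of this subsection, an \SPROD{} is precisely a \ADSSTS{} with range $\{0,1,2\}$ that is transitive in the colors $1$ and $2$, so the class of requirements governed by the statement coincides exactly with the class handled by Lemma~\ref{thm:ads_sts_exists_gen} under those parameters. The side hypothesis $0\in I\setminus J$ is immediate since $0\in\{0,1,2\}\setminus\{1,2\}$, so the general lemma applies verbatim and produces a computable stable $c:[\mathbb{N}]^2\to\{0,1,2\}$, transitive in both colors $1$ and $2$, that satisfies every \SPROD{} in~$\emptyset$.

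Thus the proof is essentially a one-line invocation. If I wanted to unpack why the general construction suffices for this specialization, I would point to two features of the finite-injury argument in the proof of Lemma~\ref{thm:ads_sts_exists_gen}: first, the approximating sets $A_{s,r,j}$ are maintained inside the prescribed range $I$, so the resulting $c$ indeed takes values only in $\{0,1,2\}$; second, transitivity in each $j\in J$ is enforced by the explicit closure rule ``if $b\in A_{s,r,i}$, $i\in J$, $a<b$, and $c(a,b)=i$, then $a\in A_{s,r,i}$.'' Applied with $J=\{1,2\}$, this closure is precisely what guarantees that the colors $1$ and $2$ are transitive on the final $c$. No conflicts arise from this closure, because the general proof already verifies that any earlier-restrained $a$ satisfies $c(a,\,\cdot\,)=j$ on all sufficiently large inputs.

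There is no genuine obstacle here: the whole work is already done in Lemma~\ref{thm:ads_sts_exists_gen}, and the only thing to check is the hypothesis $0\in I\setminus J$, which holds trivially. The same observation, of course, is what justifies the two preceding ``corollary'' lemmata (the diagonalization and $\mathbf{WKL}$-preservation statements for \SPROD{}s) as immediate specializations of Lemmata~\ref{thm:ads_sts_diat} and~\ref{thm:ads_sts_wkl_gen}.
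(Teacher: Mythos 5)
Your proposal matches the paper exactly: the paper also obtains this lemma as an immediate specialization of Lemma~\ref{thm:ads_sts_exists_gen} with $I=\{0,1,2\}$ and $J=\{1,2\}$, noting that an \SPROD{} is by definition a \ADSSTS{} with that range and transitivity, and that $0\in I\setminus J$ holds. Your additional remarks about the closure rule enforcing transitivity are accurate unpacking of the general construction, but no further argument is needed.
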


We first give our argument showing that we can satisfy $\mathbf{ADS}$.
\begin{lemma}\label{thm:ads_prod_ads}
  Suppose $c$ satisfies every \SPROD{} in $X$ and $\prec$ is a linear ordering.  Then there is an infinite $\prec$-monotone sequence $\Lambda$ so that $c$ satisfies every \SPROD{} in $X\oplus\Lambda$.
\end{lemma}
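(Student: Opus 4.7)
The approach is to mimic the proof of Lemma \ref{thm:ads_sts_ads} almost verbatim; the only genuinely new content is verifying that the requirement $R$ constructed from two \SPROD{}s $R^+, R^-$ is itself a \SPROD{}, so that our hypothesis on $c$ can be invoked.

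First I would reduce to the case that $\prec$ is stable-ish via the cited result from \cite{LST} and fix a witnessing initial segment $V$. The forcing notion is identical: conditions are pairs $(p, q)$ where $p$ is $\prec$-increasing with $p^+ \in V$ and $q$ is $\prec$-decreasing with $q^+ \notin V$, with extension and the definitions of forcing a requirement on the increasing/decreasing side carried over unchanged. A genericity argument then reduces the lemma to the density statement $(\ast)$: given \SPROD{}s $R^+, R^-$ and a condition $(p, q)$, there is an extension forcing $R^+$ on the increasing side or $R^-$ on the decreasing side.

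To prove $(\ast)$, I build the requirement $R$ from $R^+, R^-$ using exactly the tree $T$, the auxiliary functions $\pi^\upsilon_j, \rho^\upsilon_j$, and the simple block statements $K_\upsilon$ of Lemma \ref{thm:ads_sts_ads}. The positional data is assigned by $d_\upsilon(\pi^\upsilon_j(i)) = d^+_{\sigma^\upsilon_j}(i)$, $d_\upsilon(\rho^\upsilon_j(i)) = d^-_{\tau^\upsilon_j}(i)$, and $d_\upsilon(k) = 0$ at positions outside every $\rng(\pi^\upsilon_j) \cup \rng(\rho^\upsilon_j)$; the bookkeeping assumption that $d^+_\sigma(|\sigma|-1) = 0$ and $d^-_\tau(|\tau|-1) = 0$ makes this consistent at the shared final positions. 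The verification that satisfaction of $R$ yields the required extension of $(p, q)$ transfers word-for-word.

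The new work is checking $R$ is a \SPROD{}. The range condition is immediate from $R^+, R^-$ having range $\{0,1,2\}$. For transitivity in color $i \in \{1, 2\}$, I first reduce to the linear case via Lemma \ref{thm:linearize}, then argue as follows: suppose $\tau \sqsubsetneq \upsilon$ in $T$, $k < |\tau|$, $d_\tau(k) = i$, and $d_\upsilon(|\tau|) = i$. Each non-zero color at a position identifies it as an interior slot of some $\pi^\upsilon_{j^*}$ (or $\rho^\upsilon_{j^*}$), and the ownership of a slot by a particular $j^*$-chain is stable along extensions until (if ever) that chain is destroyed; moreover, the bookkeeping forces $|\tau|$ to occupy preimage $|\sigma^\tau_{j^*}|$ in $\sigma^\upsilon_{j^*}$ whenever $d_\upsilon(|\tau|) \neq 0$. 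Transitivity of $R^+$ applied to $\sigma^\tau_{j^*} \sqsubsetneq \sigma^\upsilon_{j^*}$, with $d^+_{\sigma^\tau_{j^*}}(i_k) = i$ and $d^+_{\sigma^\upsilon_{j^*}}(|\sigma^\tau_{j^*}|) = i$, then gives $d^+_{\sigma^\upsilon_{j^*}}(i_k) = i$, hence $d_\upsilon(k) = i$. The corresponding $\rho$-side analysis uses transitivity of $R^-$.

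The main obstacle is making this ownership analysis rigorous in the cases where the chains owning $k$ and $|\tau|$ could a priori diverge, or where $k$'s chain could have been destroyed between $\tau$ and $\upsilon$, leaving $k$ orphaned with $d_\upsilon(k) = 0 \neq i$. Such cases are ruled out (or defused by refining the ``arbitrary'' values of $d_\upsilon$ on unused positions) through a careful but mechanical induction on the extension rules of $T$, using the bookkeeping assumption that nonzero colors cannot occur at freshly-created head positions.
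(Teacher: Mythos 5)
Your reduction to the stable-ish case, the conditions $(p,q)$, and the density statement $(\ast)$ do carry over, but the heart of your proposal --- that the requirement $R$ built exactly as in Lemma \ref{thm:ads_sts_ads} is, perhaps after refining the ``arbitrary'' values of $d_\upsilon$, transitive in colors $1$ and $2$ --- is false, and this is precisely the obstruction the paper's proof is built around. The transitivity condition on $R$ couples positions owned by \emph{different} split pairs and by sequences on \emph{opposite} sides, and neither the transitivity of $R^+$ nor that of $R^-$ controls such cross interactions; worse, the offending positions lie in $\rng(\pi^\upsilon_j)\cup\rng(\rho^\upsilon_j)$, where $d_\upsilon$ is dictated (it must equal the corresponding $d^+$ or $d^-$ value, or else satisfying $\Theta_{R;\upsilon}$ no longer places the witnesses in the sets $A^*_i(c)$ demanded by $\Delta_{R^+;\sigma_j}$ and $\Delta_{R^-;\tau_j}$), so no choice of the free values can repair it. Concretely, let $R^+$ be a single branch of length $3$ with $d^+_{\langle0\rangle}=(0)$, $d^+_{\langle0,0\rangle}=(1,0)$, $d^+_{\langle0,0,0\rangle}=(0,0,0)$, and $R^-$ a single branch of length $2$ with $d^-_{\langle0\rangle}=(0)$, $d^-_{\langle0,0\rangle}=(1,0)$; both are \SPROD{}s obeying the bookkeeping convention. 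Following the extension rule of Lemma \ref{thm:ads_sts_ads} along the path with $j_0=0,1,0,1,0,1$ yields a node $\upsilon_1$ of length $6$ whose pair $2$ has $\sigma^{\upsilon_1}_2=\langle0,0,0\rangle$, $\tau^{\upsilon_1}_2=\langle0,0\rangle$, $\pi^{\upsilon_1}_2=\{(0,1),(1,3),(2,5)\}$, $\rho^{\upsilon_1}_2=\{(0,4),(1,5)\}$, hence $d_{\upsilon_1}(4)=d^-_{\langle0,0\rangle}(0)=1$ and $d_{\upsilon_1}(1)=d^+_{\langle0,0,0\rangle}(0)=0$; but at the initial segment $\upsilon_0$ of length $4$ one has $d_{\upsilon_0}(1)=d^+_{\langle0,0\rangle}(0)=1$, so transitivity of $R$ in color $1$ (with $\tau=\upsilon_0$, $j=1$, trigger at position $|\upsilon_0|=4$) forces $d_{\upsilon_1}(1)=1$, a contradiction at a constrained position. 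Note that the trigger block sits on the decreasing side while the violated block sits on the increasing side of the same merged pair: this is exactly the conflict described in the discussion preceding the proof, and it defeats your ``ownership'' argument because the restraint of a surviving block legitimately changes when its pair is extended (the values $d^+_{\sigma}$ and $d^+_{\sigma'}$ for $\sigma\sqsubsetneq\sigma'$ need not cohere), which is just what the cross-side trigger exposes.

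For this reason the paper's proof does not reuse the tree of Lemma \ref{thm:ads_sts_ads} at all. It keeps only the forcing frame and rebuilds the requirement via ``processes'' of type $K'\subseteq[1,D]\times[1,E]$: whenever a side process runs, all previously found split pairs are deactivated (their blocks restrained to $0$), and of any two pairs that were ever simultaneously active one is immediately discarded; the inductive combination step (from processes of types $\{(1,e)\}$ and $\{(1,e+1),(d,e)\}$ to one of type $\{(1,e+1),(d+1,e)\}$, iterated up to type $\{(D,E)\}$) replaces the word-for-word transfer you propose. Any correct proof along your lines needs some mechanism of this kind that prevents two witnessing sequences from carrying interleaved nonzero restraints while both remain active; as it stands, your verification step is exactly the missing content of the lemma rather than a mechanical check.
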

\begin{proof}
The proof is similar to the proof of Lemma \ref{thm:ads_sts_ads}.  Again, it suffices to assume that $\prec$ is stable-ish as witnessed by $V$, and we again force with conditions $(p,q)$ where $p^+\in V$, $q^+\not\in V$.  Again, it suffices to show:
\begin{quote}
  $(\ast)$ Suppose $R^+$ and $R^-$ are requirements and $(p,q)$ is a condition.  Then there is a condition $(p',q')$ extending $(p,q)$ which either forces $R^+$ on the increasing side or $R^-$ on the decreasing side.
\end{quote}
Let $R^+=(T^+,\{L_\sigma\},\{d^+_\sigma\})$ and $R^-=(T^-,\{M_\tau\},\{d^-_\tau\})$ be given.  As in Lemma \ref{thm:ads_sts_ads}, we can assume that $d^+_\sigma(|\sigma|-1)=0$ for any $\sigma\in T^+$, and a similar assumption for $T^-$.  Recall that a split pair is a pair $(p',q')$ with $p\sqsubseteq p'$, $q\sqsubseteq q'$, and $(p')^+=(q')^+$.  

  The basic idea---combining split pairs of various lengths---is the same as in Lemma \ref{thm:ads_sts_cac}.  However in the proof of Lemma \ref{thm:ads_sts_cac}, we had many split pairs which were all active simultaneously.  To deal with the transitivity requirement, we want to deactivate some split pairs while we are in the process of constructing others.

  In particular, when we obtain a split pair $(p,q)$, we want to ensure that no segment of $p$ (other than the last one) was active at any stage where any segment of $q$ (other than the last one) was constructed and vice-versa.  (Furthermore, because of transitivity, we should assume that if a segment is active at a stage constructing a new segment of any sequence, it is also active at any stage where that new segment is active.)

There is no obstacle in the case where $T^+=T^-=\{\langle\rangle,\langle 0\rangle\}$.   As in the proof of Lemma \ref{thm:ads_sts_cac}, we can have a tree with just two nodes, $\langle\rangle$ and $\langle 0\rangle$, whre $K^X_{\langle 0\rangle}((b_0),\vec a_0)$ holds when $b_0=(e_0,p_0,f_0,q_0)$, $(p_0,q_0)$ is a split pair, $p\sqsubset p_0$, $q\sqsubset q_0$, and both $L^{X\oplus p_0}_{\langle 0\rangle}((e_e),\vec a_0)$ and $M_{\langle 0\rangle}^{X\oplus q_0}((f_0),\vec a_0)$ hold.  Further, notice that $\Theta^X_{R;\langle\rangle}(c)$ will imply either $\Theta^{X\oplus\Lambda}_{R^+;\langle\rangle}(c)$ or $\Theta^{X\oplus\Lambda}_{R^-;\langle\rangle}$ as in Lemma \ref{thm:ads_sts_cac}.

Next, suppose we have $T^+=T^-=\{\langle\rangle, \langle0\rangle,\langle0,0\rangle\}$, and suppose we want to find a split pair $(p,q)$ where $p$ witnesses $\langle 0\rangle$ and $q$ witnesses $\langle 0,0\rangle$.  Then we can arrange to have a tree of four nodes, indicated in Figure \ref{fig:SProd_sub}, which is again essentially identical to the process described in Lemma \ref{thm:ads_sts_cac}.

\begin{figure}
\begin{tikzpicture}[scale=0.6, font=\tiny]
\begin{scope}[shift={(0,0)}]
  \draw(-1.3,-.3)--(1.3,-.3)--(1.3,.8)--(-1.3,.8)--cycle;
  \draw[->](-1,0)--(-.1,.5); \draw[->](1,0)--(.1,.5);
\end{scope}
\begin{scope}[shift={(3,-2)}]
  \draw(-1.3,-.3)--(2.3,-.3)--(2.3,.8)--(-1.3,.8)--cycle;
  \draw[->] (-1,0)--(0,.25); \draw[dashed,->] (0,.25)--(.9,.5);
  \draw[dashed,->](2,0)--(1.1,.5);
\end{scope}
\begin{scope}[shift={(3,-4)}]
  \draw(-1.3,-.3)--(2.3,-.3)--(2.3,.8)--(-1.3,.8)--cycle;
  \draw[dotted,->](-1,0)--(-.1,.5); 
  \draw[dashed,->](2,0)--(1,.25);\draw[dotted,->](1,.25)--(.1,.5);
\end{scope}
\begin{scope}[shift={(-3,-2)}]
  \draw(-1.3,-.3)--(2.3,-.3)--(2.3,.8)--(-1.3,.8)--cycle;
  \draw[dotted,->](-1,0)--(-.1,.5); 
  \draw[->](2,0)--(1,.25);\draw[dotted,->](1,.25)--(.1,.5);
\end{scope}
\draw[->](0,-.3)--(0,-.8)--(3.5,-.8)--(3.5,-1.2);
\draw[->](0,-.8)--(-2.5,-.8)--(-2.5,-1.2);
\draw[->](3.5,-2.3)--(3.5,-3.2);
\end{tikzpicture}
\caption{}\label{fig:SProd_sub}  
\end{figure}

More formally, we have four nodes, $\upsilon$, $\upsilon^\frown\langle 0\rangle$, $\upsilon^\frown\langle 1\rangle$, and $\upsilon^\frown\langle 1,1\rangle$, each $b_i=(e_i,p_i,f_i,q_i)$, and, for instance, $K^X_{\upsilon^\frown\langle 1\rangle}((b_0,\ldots,b_{|\upsilon|}),\vec a_{|\upsilon|})$ holds if $(p_{|\upsilon|},q_{|\upsilon|})$ is a split pair, $p_{|\upsilon|-1}\sqsubseteq p_{|\upsilon|}$, $q\sqsubseteq q_{|\upsilon|}$, $L^{X\oplus p_{|\upsilon|}}_{\langle 0,0\rangle}((e_{|\upsilon|-1},e_{|\upsilon|}),\vec a_{|\upsilon|})$, and $M^{X\oplus q_{|\upsilon|}}_{\langle 0\rangle}((f_{|\upsilon|}),\vec a_{|\upsilon|})$.

Now consider the same case, where $T^+=T^-=\{\langle\rangle, \langle0\rangle,\langle0,0\rangle\}$, but suppose we want to find a split pair $(p,q)$ where both sequences witness $\langle 0,0\rangle$.  

\begin{figure}
\begin{tikzpicture}[scale=0.6, font=\tiny]
  \begin{scope}[shift={(0,0)}]
    \draw[double](-1.3,-.3)--(1.3,-.3)--(1.3,.8)--(-1.3,.8)--cycle;
    \draw[->](-1,0)--(-.1,.5); \draw[->](1,0)--(.1,.5);
    \node at (-1,1) {$\langle 0\rangle$};
    \node at (-.5,0) {$p_0$}; \node at (.5,0) {$q_0$};
  \end{scope}
  \begin{scope}[shift={(0,-2)}]
    \draw(-1.3,-.3)--(1.3,-.3)--(1.3,.8)--(-1.3,.8)--cycle;
    \draw[->](-1,0)--(-.1,.5); \draw[->](1,0)--(.1,.5);
    \node at (-1,1) {$\langle 0,0\rangle$};
    \node at (-.5,0) {$p_1$}; \node at (.5,0) {$q_1$};
  \end{scope}
  \draw[->](0,-.3)--(0,-1.2);
  \begin{scope}[shift={(-2.5,-4)}]
    \draw(-1.3,-.3)--(2.3,-.3)--(2.3,.8)--(-1.3,.8)--cycle;
    \draw[->](-1,0)--(-.1,.5); 
    \draw[->](2,0)--(1,.25);\draw[->](1,.25)--(.1,.5);
    \node at (-.8,1) {$\langle 0,0,0\rangle$};
    \node at (-.5,0) {$p_2$}; \node at (.75,0) {$q_1\sqsubset q_2$};
  \end{scope}
  \begin{scope}[shift={(2.5,-4)}]
    \draw(-1.3,-.3)--(2.3,-.3)--(2.3,.8)--(-1.3,.8)--cycle;
    \draw[->] (-1,0)--(0,.25); \draw[->] (0,.25)--(.9,.5);
    \draw[->](2,0)--(1.1,.5);
    \node at (-.8,1) {$\langle 0,0,1\rangle$};
    \node at (.1,0) {$p_1\sqsubset p_2$}; \node at (1.4,0) {$q_2$};
  \end{scope}
  \draw(0,-2.3)--(0,-2.8);
  \draw(0,-2.8)--(3,-2.8);\draw[->](3,-2.8)--(3,-3.2);
  \draw(0,-2.8)--(-2,-2.8);\draw[->](-2,-2.8)--(-2,-3.2);
  \begin{scope}[shift={(2.5,-6)}]
    \draw(-1.3,-.3)--(2.3,-.3)--(2.3,.8)--(-1.3,.8)--cycle;
    \draw[->](-1,0)--(-.1,.5); 
    \draw[->](2,0)--(1,.25);\draw[->](1,.25)--(.1,.5);
    \node at (-.8,1) {$\langle 0,0,1,0\rangle$};
    \node at (-.5,0) {$p_3$}; \node at (.75,0) {$q_2\sqsubset q_3$};
  \end{scope}
  \draw[->](3,-4.3)--(3,-5.2);
  \begin{scope}[shift={(4.5,-8.5)}]
    \draw(-3.8,-.3)--(2.3,-.3)--(2.3,.8)--(-3.8,.8)--cycle;
    \draw[->](-1,0)--(-.1,.5); 
    \draw[->](2,0)--(1,.25);\draw[->](1,.25)--(.1,.5);
    \draw[->](-3.5,0)--(-2.6,.5); \draw[->](-1.5,0)--(-2.4,.5);
    \node at (-3,0) {$p_0$}; \node at (-2,0) {$q_0$};
    \node at (-.8,1) {$\langle 0,0,0\rangle$ or $\langle 0,0,1,0\rangle$, redrawn};
    \node at (-.5,0) {$p_3$}; \node at (.75,0) {$q_2\sqsubset q_3$};
  \end{scope}
  \begin{scope}[shift={(-4.5,-8.5)}]
    \draw(-3.8,-.3)--(2.3,-.3)--(2.3,.8)--(-3.8,.8)--cycle;
    \draw[->](-3.5,0)--(-2.6,.5); 
    \draw[->](-.5,0)--(-1.5,.25);\draw[->](-1.5,.25)--(-2.4,.5);
    \draw[->](0,0)--(.9,.5); \draw[->](2,0)--(1.1,.5);
    \node at (.5,0) {$p_0$}; \node at (1.5,0) {$q_0$};
    \node at (-.8,1) {$\langle 0,0,0\rangle$ or $\langle 0,0,1,0\rangle$, redrawn};
    \node at (-3,0) {$p_3$}; \node at (-1.75,0) {$q_2\sqsubset q_3$};
  \end{scope}
  \draw[dashed] (-2,-4.3)--(-2,-5.5)--(1.2,-5.5);
  \draw[dashed] (0,-5.5)--(0,-8.2);
  \draw[dashed,->] (0,-8.2)--(.7,-8.2);
  \draw[dashed,->] (0,-8.2)--(-2.2,-8.2);
  \draw[dotted] (-4.2,-.8)--(5.5,-.8)--(5.5,-6.5)--(-4.2,-6.5)--cycle;
  \begin{scope}[shift={(-6,-10.5)}]
    \draw[double](-2.3,-.3)--(2.3,-.3)--(2.3,.8)--(-2.3,.8)--cycle;
    \draw[->] (-2,0)--(-1,.25); \draw[->] (-1,.25)--(-.1,.5);
    \draw[->](2,0)--(1,.25);\draw[->](1,.25)--(.1,.5);
    \node at (-1.2,1) {$\langle \ldots,0\rangle$};
    \node at (-1,0) {$p_3\sqsubset p_4$}; \node at (.75,0) {$q_0\sqsubset q_4$};
  \end{scope}
  \draw(-5.2,-8.8)--(-5.2,-9.2);
  \draw(-5.2,-9.2)--(-6,-9.2);\draw[->](-6,-9.2)--(-6,-9.7);
  \draw[->](3.2,-8.8)--(3.2,-9.7);
  \begin{scope}[shift={(2.5,-10.5)}]
    \draw[double](-1.3,-.3)--(2.3,-.3)--(2.3,.8)--(-1.3,.8)--cycle;
    \draw[->] (-1,0)--(0,.25); \draw[->] (0,.25)--(.9,.5);
    \draw[->](2,0)--(1.1,.5);
    \node at (-.8,1) {$\langle \ldots,1\rangle$};
    \node at (.1,0) {$p_0\sqsubset p_4$}; \node at (1.4,0) {$q_4$};
  \end{scope}
  \draw(-5.2,-9.2)--(3,-9.2);\draw[->](3,-9.2)--(3,-9.7);
  \begin{scope}[shift={(2.5,-10.5)}]
  \begin{scope}[shift={(0,-2)}]
    \draw(-1.3,-.3)--(1.3,-.3)--(1.3,.8)--(-1.3,.8)--cycle;
    \draw[->](-1,0)--(-.1,.5); \draw[->](1,0)--(.1,.5);
    \node at (-1.4,1) {$\langle\ldots,1,0\rangle$};
    \node at (-.5,0) {$p_5$}; \node at (.5,0) {$q_5$};
  \end{scope}
  \draw[->](0,-.3)--(0,-1.2);
  \begin{scope}[shift={(-2.5,-4)}]
    \draw(-1.3,-.3)--(2.3,-.3)--(2.3,.8)--(-1.3,.8)--cycle;
    \draw[->](-1,0)--(-.1,.5); 
    \draw[->](2,0)--(1,.25);\draw[->](1,.25)--(.1,.5);
    \node at (-.8,1) {$\langle \ldots,1,0,0\rangle$};
    \node at (-.5,0) {$p_6$}; \node at (.75,0) {$q_5\sqsubset q_6$};
  \end{scope}
  \begin{scope}[shift={(2.5,-4)}]
    \draw(-1.3,-.3)--(2.3,-.3)--(2.3,.8)--(-1.3,.8)--cycle;
    \draw[->] (-1,0)--(0,.25); \draw[->] (0,.25)--(.9,.5);
    \draw[->](2,0)--(1.1,.5);
    \node at (-.8,1) {$\langle \ldots,1,0,1\rangle$};
    \node at (.1,0) {$p_5\sqsubset p_6$}; \node at (1.4,0) {$q_6$};
  \end{scope}
  \draw(0,-2.3)--(0,-2.8);
  \draw(0,-2.8)--(3,-2.8);\draw[->](3,-2.8)--(3,-3.2);
  \draw(0,-2.8)--(-2,-2.8);\draw[->](-2,-2.8)--(-2,-3.2);
  \begin{scope}[shift={(2.5,-6)}]
    \draw(-1.3,-.3)--(2.3,-.3)--(2.3,.8)--(-1.3,.8)--cycle;
    \draw[->](-1,0)--(-.1,.5); 
    \draw[->](2,0)--(1,.25);\draw[->](1,.25)--(.1,.5);
    \node at (-1.2,1) {$\langle\ldots,1,0,1,0\rangle$};
    \node at (-.5,0) {$p_7$}; \node at (.75,0) {$q_6\sqsubset q_7$};
  \end{scope}
  \draw[->](3,-4.3)--(3,-5.2);
  \begin{scope}[shift={(5.5,-8.5)}]
    \draw(-4.8,-.3)--(2.3,-.3)--(2.3,.8)--(-4.8,.8)--cycle;
    \draw[->](-1,0)--(-.1,.5); 
    \draw[->](2,0)--(1,.25);\draw[->](1,.25)--(.1,.5);
    \draw[->](-1.5,0)--(-2.4,.5);
    \draw[->] (-4.5,0)--(-3.5,.25); \draw[->] (-3.5,.25)--(-2.6,.5);
    \node at (-3.4,0) {$p_0\sqsubset p_4$}; \node at (-2,0) {$q_4$};
    \node at (-.8,1) {$\langle \ldots,1,0,0\rangle$ or $\langle\ldots,1,0,1,0\rangle$ redrawn};
    \node at (-.5,0) {$p_7$}; \node at (.75,0) {$q_6\sqsubset q_7$};
  \end{scope}
  \begin{scope}[shift={(-5.5,-8.5)}]
    \draw(-3.8,-.3)--(3.3,-.3)--(3.3,.8)--(-3.8,.8)--cycle;
    \draw[->](-3.5,0)--(-2.6,.5); 
    \draw[->](-.5,0)--(-1.5,.25);\draw[->](-1.5,.25)--(-2.4,.5);
    \draw[->] (0,0)--(1,.25); \draw[->] (1,.25)--(1.9,.5);
    \draw[->](3,0)--(2.1,.5);
    \node at (.8,0) {$p_0\sqsubset p_4$}; \node at (2.5,0) {$q_4$};
    \node at (-.8,1) {$\langle \ldots,1,0,0\rangle$ or $\langle\ldots,1,0,1,0\rangle$ redrawn};
    \node at (-3,0) {$p_7$}; \node at (-1.75,0) {$q_6\sqsubset q_7$};
  \end{scope}
  \draw[dashed] (-2,-4.3)--(-2,-5.5)--(1.2,-5.5);
  \draw[dashed] (0,-5.5)--(0,-8.2);
  \draw[dashed,->] (0,-8.2)--(.7,-8.2);
  \draw[dashed,->] (0,-8.2)--(-2.2,-8.2);
  \draw[dotted] (-4.6,-.8)--(5.5,-.8)--(5.5,-6.5)--(-4.6,-6.5)--cycle;
\end{scope}
\begin{scope}[shift={(-3.25,-21)}]
  \draw[double](-2.3,-.3)--(2.3,-.3)--(2.3,.8)--(-2.3,.8)--cycle;
  \draw[->] (-2,0)--(-1,.25); \draw[->] (-1,.25)--(-.1,.5);
  \draw[->](2,0)--(1,.25);\draw[->](1,.25)--(.1,.5);
  \node at (-1.2,1) {$\langle \ldots,\ldots,0\rangle$};
  \node at (-1,0) {$p_7\sqsubset p_8$}; \node at (.75,0) {$q_4\sqsubset q_8$};
\end{scope}
\draw[->] (-3.25,-19.3)--(-3.25,-20.2);
\end{tikzpicture}
\caption{}\label{fig:SProd_iteration}  
\end{figure}

We illustrate the process in Figure \ref{fig:SProd_iteration}, and will now go through the steps to clarify the diagram.  As drawn, there are some redundancies and inefficiencies, but these reflect how our actual construction will be built recursively.

First, we explain the notion used in the diagram and the underlying tree it represents, and then explain how it is obtained.  Each $b_i=(e_i,p_i,f_i,q_i)$, where $p_i$ and $q_i$ are a split pair and $e_i$ and $f_i$ are the auxiliary date for $L$ and $M$.  Each box labeled $\upsilon$ indicates the configuration that is promised to exist by $K^X_\upsilon$.  For example, $K^X_{\langle 0,0,1,0\rangle}((b_0,\ldots,b_3),\vec a_3)$ holds when $(p_4,q_4)$ is a split pair, $q_3\sqsubset q_4$, $L^{X\oplus p_4}_{\langle 0\rangle}((e_4),\vec a_3)$, and $M^{X\oplus q_r}_{\langle 0,0\rangle}((f_3,f_4),\vec a_3)$.

The nodes $\langle 0,0,0\rangle$ and $\langle 0,0,1,0\rangle$ each have the same subtree below them, so we only copy it once.  For instance, the node $\langle \ldots,0\rangle$ refers to two nodes---$\langle 0,0,0,0\rangle$ and $\langle 0,0,1,0,0\rangle$---which are largely identical.  (However the indices come from descendents of the longer branch; for example, the node $\langle 0,0,0,0\rangle$ should actually be labeled $p_3\sqsubset p_4, q_1\sqsubset q_4$.)  This occurs again at the very end, where the nodes $\langle \ldots,1,0,0\rangle$ and $\langle \ldots,1,0,1,0\rangle$ have the same subtree (consisting of a single node) below them.

The definition of $d$ is that segments are inactive when a segment is outside a dotted box and the child nodes are inside the box.  For example, $d_{\langle \ldots,1\rangle}(0)=0$, because the segments $(p_0,q_0)$ were constructed outside the box and the child of $\langle \ldots,0\rangle$ is inside the box.  However $d_{\langle \ldots,1,0,0\rangle}(0)=d^+_{\langle 0,0\rangle}(0)$ because the split pair $(p_5,q_5)$ should be active in the construction of children of $\langle \ldots,1,0,0\rangle$.

The boxes with two split pairs are comparing the order of the endpoints---in the left copy of ``$\langle 0,0,0\rangle$ or $\langle 0,0,1,0\rangle$, redrawn'', the node $p_3^+\prec p_0^+$ while in the right copy, $p_0^+\prec p_3^+$.  These two situations can lead to slightly different possible outcomes, so we illustrate them separately.

As an example, we go through our analysis when $\Theta^X_{T;\langle 0,0,1,0\rangle}(c)$ holds.  Since $\Delta^X_{T;\langle 0,0,1,0\rangle}(c,(b_0,\ldots,b_3),\vec a_0,\ldots,\vec a_3)$ holds, we have the split pairs $(p_0,q_0)$ and $(p_3,q_3)$ where $L^{X\oplus p_3}_{\langle 0\rangle}((e_3),\vec a_3)$, $M^{X\oplus q_3}_{\langle 0,0\rangle}((f_2,f_3),\vec a_3)$, and so on.  If $p_3^+\prec p_0^+$---that is, the left hand case---then either there are infinitely many $x$ with $x\prec p_3^+$, infinitely many $x$ with $p_3^+\prec x\prec p_0^+$, or infinitely many $x$ with $p_0^+\prec x$.  If there are infinitely many $x$ with $x\prec p_3^+$ then $\Theta^{X\oplus q_3}_{T^-;\langle 0,0\rangle}(c)$ holds.  If there are infinitely many $x$ with $p_3^+\prec x\prec p_0^+$ then one of $\Theta^{X\oplus p_3}_{T^+;\langle 0\rangle}(c)$ and $\Theta^{X\oplus q_0}_{T^-;\langle 0\rangle}(c)$ must hold (because otherwise we would be able to find witnesses to the node $\langle \ldots,0\rangle$).  If there are infinitely many $x$ with $p_0^+\prec x$ then $\Theta^{X\oplus p_3}_{T^+;\langle 0\rangle}(c)$ must hold (because otherwise we would be able to find witnesses to the node $\langle \ldots,1\rangle$).

In the right hand case, where $p_0^+\prec p_3^+$, the situation is simpler: we only care about whether there are infinitely many $x$ with $x\prec p_3^+$ or infinitely many $x$ with $p_3^+\prec x$.  If there are infinitely many $x$ with $x\prec p_3^+$ then we have $\Theta^{X\oplus q_3}_{T^-;\langle 0,0\rangle}(c)$.  If there are infinitely many $x$ with $p_3^+\prec x$ then also there are infinitely many $x$ with $p_0^+\prec x$, so we have $\Theta^{X\oplus p_0}_{T^+;\langle 0\rangle}(c)$ (since otherwise we would find witnesses to $\langle \ldots,1\rangle$).

Similar analyses (usually with fewer cases) hold at other nodes.

We now point out how this tree is built.  The way our recursion works is that we will build constructions of longer split pairs by combining the trees that build short ones.  In particular, we will take a ``sub-process''---that is a tree of nodes producing some particular configuration---and insert it into a second tree (the ``main process'').  In Figure \ref{fig:SProd_iteration}, the four nodes in the dotted box represent the sub-process, which in this case is the tree from Figure \ref{fig:SProd_sub}, which is repeated twice.  At the end node of each of these subprocesses, we have ensured the construction of a split pair $(p',q')$ where $p'$ has one segment and $q'$ has two segments.

In this case the main process is actually the same process: the four nodes with doubled borders actually form the same underlying tree.  We produce this by beginning with the four nodes from the original process, identical to those in the dotted box.  However every time were are at a node one of whose children is a leaf, we insert a copy of our subprocess.

Consider the first time this happens.  The nodes $\langle 0\rangle$ and $\langle\ldots,0\rangle$ in the larger tree correspond to the nodes $\langle 0,0\rangle$ and $\langle 0,0,0\rangle$ in the dotted box.  In the sub-process, this corresponds to extending a decreasing segment of length $1$ to a decreasing segment of length $2$, paired with a new segment of length $1$.  In the passage from $\langle 0\rangle$ to $\langle \ldots,0\rangle$, however, we pair this segment with an increasing sequence of length $2$---we use the sub-process to obtain a second, unrelated, split pair, and we use $p_3$ from that pair as the basis for forming a longer increasing sequence.

Note that this tree is simpler than a general tree for constructing split pairs where both segments have length $2$, because we are taking advantage of the fact that we never build segments of length greater than $2$.  In general, there would have to be additional side branches corresponding to cases where, instead, one of our segments of length $2$ was extended to a segment of length $3$.

We now describe our general construction.  Let $r=\max\{|\sigma|\mid\sigma\in T^+\}$ and $s=\max\{|\tau|\mid \tau\in T^-\}$.  Let $D$ be the set of pairs $(r',s')$ with $r'\in[1,r]$ and $s'\in [1,s]$.

When $D'$ is a set of pairs, a \emph{process of type $D'$} is a requirement $R_{D'}=(T_{D'},\{K_{D',\upsilon}\},\{d_{K',\upsilon}\})$ such that each leaf constructs a split pair whose lengths belong to $D'$.  Stated formally, for each leaf $\upsilon\in T_{D'}$, $\Delta^X_{D',\upsilon}(c,(b_0,\ldots,b_{|\upsilon|-1}),\vec a_0,\ldots,\vec a_{|\upsilon|-1})$ implies that each $b_i$ has the form $(e_i,p_i,f_i,q_i)$ where:
\begin{itemize}
\item $p\sqsubseteq p_i$,
\item $q\sqsubseteq q_i$,
\item $(p_i,q_i)$ is a split pair,
\item there are $\sigma\in T^+$ and $\tau\in T^-$ and sequences $(v_0,\ldots,v_{|\sigma|-1})$ and $(w_0,\ldots,w_{|\tau|-1})$ such that:
  \begin{itemize}
  \item the sequences are disjoint except that $v_{|\sigma|-1}=w_{|\tau|-1}$,
  \item $(|\sigma|,|\tau|)\in D'$,
  \item $\Delta^{X\oplus p_i}_{T^+;\sigma}(c,(e_{v_0},\ldots,e_{v_{|\sigma|-1}}),\vec a_{v_0},\ldots,\vec a_{v_{|\sigma|-1}})$,
  \item $\Delta^{X\oplus q_i}_{T^-;\tau}(c,(f_{w_0},\ldots,f_{w_{|\sigma|-1}}),\vec a_{w_0},\ldots,\vec a''_{w_{|\tau|-1}})$.
  \end{itemize}
\end{itemize}
Furthermore, we require that for each non-leaf $\upsilon\in T_{D'}$, $\Theta_{R_{D'};\upsilon}^X(c)$ implies that either there is a $p'$ forcing $R^+$ on the chain side or a $q'$ forcing $R^-$ on the antichain side.  (For notational reasons, we allow $D'\not\subseteq D$, however note that a process of type $D'$ is equivalent to a process of type $D'\cap D$.)

Our main construction will show that, given a process of type $\{(1,s')\}$ and a process of type $\{(1,s'+1),(r',s')\}$, we can produce a process of type $\{(1,s'+1),(r'+1,s')\}$.

We have constructed a process of type $\{(1,1)\}$: for each $\sigma\in T^+$ and $\tau\in T^-$, $T_{\{(1,1)\}}$ has a node $\langle (\sigma,\tau)\rangle$, with $\Delta^{X}_{R_{\{1,1\}};\langle (\sigma,\tau)\rangle}(c,(e_0,p_0,q_0,f_0),\vec a_0)$ holding when both $\Delta^{X\oplus p_0}_{R^+;\sigma}(c,(e_0),\vec a_0)$ and $\Delta^{X\oplus q_0}_{R^-;\tau}(c,(f_0),\vec a_0)$ hold (and also the usual conditions---$(p_0,q_0)$ are a split pair with $p\sqsubset p_0$ and $q\sqsubset q_0$).

For the recursive part of the construction, suppose we have $R_{\{(1,s'+1),(r',s')\}}$ and $R_{\{(1,s')\}}$.  We describe $R_{\{(1,s'+1),(r'+1,s')\}}$.  Roughly speaking, we will copy $R_{\{(1,s')\}}$ except that, before each leaf, we will insert a copy of $R_{\{(1,s'+1),(r',s)\}}$ and modify the leaf accordingly.

We construct $T_{\{(1,s'+1),(r'+1,s')\}}$ and, as we do, a partial function $f:T_{\{(1,s'+1),(r'+1,s')\}}\rightarrow T_{\{(1,s')\}}$ and, for each $\upsilon\in \dom(f)$, a monotone reindexing function $\pi_\upsilon:[0,|f(\upsilon)|)\rightarrow[0,|\upsilon|)$.  We set $f(\langle\rangle)=\langle\rangle$.  Suppose $\upsilon\in\dom(f)$.

If no children of $f(\upsilon)$ are leaves of $T_{\{(1,s')\}}$ then we copy the children of $f(\upsilon)$ to be the children of $\upsilon$: for each child $f(\upsilon)^\frown\langle x\rangle\in T_{\{(1,s')\}}$, we place a node $\upsilon^\frown\langle x\rangle\in T_{\{(1,s'+1),(r'+1,s')\}}$ with $f(\upsilon^\frown\langle x\rangle)=f(\upsilon)^\frown\langle x\rangle$, and set:
\begin{itemize}
\item $\pi_{\upsilon^\frown\langle x\rangle}=\pi_\upsilon\cup\{(|f(\upsilon)|,|\upsilon|)\}$,
\item for each $i<|f(\upsilon)|$, $d_{\{(1,s'+1),(r'+1,s)\},\upsilon}(\pi_{\upsilon}(i))=d_{\{(1,s')\},f(\upsilon)}(i)$,
\item $K^X_{\{(1,s'+1),(r'+1,s)\},\upsilon^\frown\langle x\rangle}((b_0,\ldots,b_{|\upsilon|}),\vec a_{|\upsilon|})$ holds exactly when 
\[K^X_{\{(1,s'+1)\},f(\upsilon)^\frown\langle x\rangle}((b_{\pi_{\upsilon^\frown\langle x\rangle}(0)},\ldots,b_{\pi_{\upsilon^\frown\langle x\rangle}(|f(\upsilon)|)}),\vec a_{\pi_{\upsilon^\frown\langle x\rangle}(|f(\upsilon)|)}).\]
\end{itemize}

Suppose that a child of $f(\upsilon)$ is a leaf of $T_{\{(1,s')\}}$.  Then we first place a copy of $T_{\{(1,s'+1),(r'+1,s')\}}$: for each $\zeta\in T_{\{(1,s'+1),(r',s')\}}$, we have a node $\upsilon^\frown\zeta\in T_{\{(1,s'+1),(r'+1,s')\}}$ with:
\begin{itemize}
\item for each $i<|\upsilon|$, $d_{\{(1,s'+1),(r'+1,s')\},\upsilon^\frown\zeta}(i)=0$,
\item for each $i\in[|\upsilon|,|\upsilon^\frown\zeta|)$, $d_{\{(1,s'+1),(r'+1,s')\},\upsilon^\frown\zeta}(i)=d_{\{(1,s'+1),(r',s')\},\zeta}(i-|\upsilon|)$,
\item for $\zeta\neq\langle\rangle$, $K^X_{\{(1,s'+1),(r'+1,s')\},\upsilon^\frown\zeta}((b_0,\ldots,b_{|\upsilon|+|\zeta|-1}),\vec a_{|\upsilon|+|\zeta|-1})$ holds exactly when $K^X_{\{(1,s'+1),(r',s')\},\zeta}((b_{|\upsilon|},\ldots,b_{|\upsilon|+|\zeta|-1}),\vec a_{|\upsilon|+|\zeta|-1})$ holds.
\end{itemize}

Consider a leaf $\zeta\in T_{\{(1,s'+1),(r',s')\}}$.  There is some pair $(\sigma,\tau)$ with $(|\sigma|,|\tau|)\in\{(1,s'+1),(r',s')\}$ corresponding to this leaf.  If $(|\sigma|,|\tau|)=(1,s'+1)$ then $\upsilon^\frown\zeta$ is a leaf of $T_{\{(1,s'+1),(r'+1,s')\}}$ as well.

So suppose $(|\sigma|,|\tau|)=(r',s')$.  We have corresponding indices $v_0,\ldots,v_{|\sigma|-1}$ and $w_0,\ldots,w_{|\tau|-1}$.   We set $d_{\{(1,s'+1),(r'+1,s')\},\upsilon^\frown\zeta}$ by:
\begin{itemize}
\item for $i<|f(\upsilon)|$, $d_{\{(1,s'+1),(r'+1,s')\},\upsilon^\frown\zeta}(\pi_\upsilon(i))=d_{\{(1,s'+1)\}}(i)$,
\item for $i<|\sigma|$, $d_{\{(1,s'+1),(r'+1,s')\},\upsilon^\frown\zeta}(|\upsilon|+v_i)=d^+_{\sigma}(i)$,
\item for $i<|\tau|$, $d_{\{(1,s'+1),(r'+1,s')\},\upsilon^\frown\zeta}(|\upsilon|+w_i)=d^-_{\tau}(i)$.
\end{itemize}
Note that we have arranged for this choice of $d_{\{(1,s'+1),(r'+1,s')\},\upsilon^\frown\zeta}$ to be consistent with the transitivity requirements.


For each $f(\upsilon)^\frown\langle x\rangle\in T_{\{(1,s')\}}$ which is not a leaf, we have a node $\upsilon^\frown\zeta^\frown\langle x\rangle\in T_{\{(1,s'+1),(r'+1,s')\}}$ with $f(\upsilon^\frown\zeta^\frown\langle x\rangle)=f(\upsilon)^\frown\langle x\rangle$ and $\pi_{\upsilon^\frown\zeta^\frown\langle x\rangle}=\pi_\upsilon\cup\{(|\upsilon|,|\upsilon|+|\zeta|)\}$.  In this case, $d_{\{(1,s'+1),(r'+1,s')\},\upsilon^\frown\zeta^\frown\langle x\rangle}$ and $K^X_{\{(1,s'+1),(r'+1,s')\},\upsilon^\frown\zeta^\frown\langle x\rangle}$ are copied from $T_{\{(1,s')\}}$ just like the case where no child of $f(\upsilon)$ was a leaf.

Consider a leaf $f(\upsilon)^\frown\langle x\rangle\in T_{\{(1,s')\}}$; it must be associated to a pair $(\sigma',\tau')$ with $|\sigma'|=1$ and $|\tau'|=s'$.  We have a corresponding sequence of witnesses $w'_0,\ldots,w'_{|\tau'|-1}$.  For each immediate extension $\sigma''$ of $\sigma$, have a leaf $\upsilon^\frown\zeta^\frown\langle (1,\sigma',x)\rangle$, and $K^X_{\{(1,s'+1),(r'+1,s')\},\upsilon^\frown\zeta^\frown\langle (1,\sigma',x)\rangle}((b_0,\ldots,b_{|\upsilon|+|\zeta|}),\vec a_{|\upsilon|+|\zeta|})$ to hold when 
\begin{itemize}
\item $(p_{|\upsilon|+|\zeta|},q_{|\upsilon|+|\zeta|})$ is a split pair,
\item $p_{|\upsilon|+|\zeta|}$ extends $p_{|\upsilon|+|\zeta|-1}$,
\item $q_{|\upsilon|+|\zeta|-1}$ extends $q_{\pi_\upsilon(w'_{|\tau'|-2})}$,
\item $L^{X\oplus p_{|\upsilon|+|\zeta|}}_{T^+;\sigma''}((e_{|\upsilon|+v_0},\ldots,e_{|\upsilon|+v_{|\sigma|-1}},e_{|\upsilon|+|\zeta|-1}),\vec a_{|\upsilon|+|\zeta|})$ holds,
\item $M^{X\oplus q_{|\upsilon|+|\zeta|}}_{T^-;\tau'}((f_{\pi_\upsilon(0)},\ldots,f_{\pi_\upsilon(|\tau'|-1)},f_{|\upsilon|+|\zeta|}),\vec a_{|\upsilon|+|\zeta|})$ holds.
\end{itemize}

Iteration of this method gives the desired process.  We have a process of type $\{(1,1)\}$.  Given a process of type $\{(1,s')\}$, we apply this combination to obtain a process of type $\{(1,s'+1),(2,s')\}$, and by repeating $\{(1,s'+1),(r',s')\}$ for any $r'$.  In particular, we get a process of type $\{(1,s'+1),(r+1,s')\}$, which is the same as a process of type $\{(1,s'+1)\}$.  Inductively, we have processes of type $\{(1,s')\}$ for all $s'$.  In particular, applying the first iteration again, we have processes of type $\{(1,s+1),(r',s)\}$ for each $r'$, which is the same as a process of type $\{(r',s)\}$.  Finally, we obtain a process of type $\{(r,s)\}$, which suffices to give the desired extensions.
\end{proof}

\subsection{Separating from $\mathbf{trRT^2_k}$}

We need to generalize the ideas of the previous subsection to $\mathbf{trRT^2_k}$.  The general ideas are the same, but the bookkeeping is slightly more complicated because we now have $k$ different processes we need to interleave.

\begin{lemma}\label{thm:ads_prod_trrt}
  Suppose $c$ satisfies every \SPROD{} in $X$ and $c^*:[\mathbb{N}]^2\rightarrow[1,k]$ with all colors transitive.  Then there is an infinite $c^*$-homogeneous set $S$ so that $c$ satisfies every \SPROD{} in $X\oplus\Lambda$.
\end{lemma}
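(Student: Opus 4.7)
The plan is to extend the ``process'' framework of Lemma \ref{thm:ads_prod_ads} to $k$ transitive colors, following the outline sketched in the paragraphs preceding the lemma. I would force with conditions encoding a split $k$-tuple $(p_1,\ldots,p_k)$ of $c^*$-chains in the $k$ colors sharing a common endpoint, together with a reservoir; the generic eventually selects one coordinate $i^*$ and uses $p_{i^*}$ as the beginning of the desired $c^*$-homogeneous set $S$. Setting $K=\prod_{i=1}^k[1,D_i]$ where $D_i=\max\{|\sigma|\mid \sigma\in T_i\}$, a process of type $K'\subseteq K$ is a \SPROD{} whose non-leaf nodes force satisfaction of the original requirement on some coordinate, and whose leaves each promise a split $k$-tuple with coordinate-lengths $(d_1,\ldots,d_k)\in K'$. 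The target is a process of type $\{(D_1,\ldots,D_k)\}$: at any of its leaves, the infinite pigeonhole applied at the shared endpoint forces some $p_{i^*}$ to be extensible, so $i^*$ and $p_{i^*}$ together provide the required solution.

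The core inductive step is the combining operation illustrated in Figures \ref{fig:trrt_1}--\ref{fig:trrt_4c}. Given a process $\Pi$ of type $K'\cup\{\mathbf{e}\}$ with $\mathbf{e}$ lex-minimum, and an auxiliary subprocess $\Pi'$ of an appropriate (lex-greater) type, I modify every leaf of $\Pi$ that would produce a type-$\mathbf{e}$ witness: first deactivate all its active chains, then insert a copy of $\Pi'$, then reactivate both sets of witnesses together and look for a simultaneous extension at a chosen coordinate $i_0$. The effect is that each original type-$\mathbf{e}$ leaf is replaced by leaves of type strictly lex-greater than $\mathbf{e}$, while the non-leaf forcing behavior is inherited. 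Starting from the trivial process of type $\{(1,\ldots,1)\}$ (which is immediate from a single application of the \SPROD{} hypothesis) and iterating the combination in lex order walks us up to a process of type $\{(D_1,\ldots,D_k)\}$.

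The main obstacle, already highlighted by the paper's construction of $\{(2,2,2),(1,3,2)\}$, is that when the outer process has an active witness at the coordinate $i_0$ whose extension we want, we cannot directly delegate that extension to a subprocess: the subprocess would have to extend $i_0$ independently, forcing a transitivity commitment incompatible with the outer witness. The workaround is to use a subprocess that runs only up to an intermediate stage (one producing all preparatory extensions on coordinates other than $i_0$), and then combine its partial output with the outer process's active $i_0$-witness by simultaneous reactivation. Verifying that this discipline of deactivation-before-subprocess-insertion respects transitivity of all $k$ colors reduces, as in Lemma \ref{thm:ads_prod_ads}, to the invariant that any two witnessing chains simultaneously active at a common stage must never both be carried forward past it; that invariant is preserved automatically by the deactivation step. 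The remaining work is essentially bookkeeping to track the many intermediate copies of subprocesses that must be kept alive on parallel branches (because different branches may make different coordinates the ``extensible'' one), and is a direct extension of the $k=3$ bookkeeping worked out in the text.
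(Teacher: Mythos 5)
Your proposal follows the same general route as the paper (forcing with split $k$-tuples, processes of type $K'\subseteq K=\prod_i[1,D_i]$, target type $\{(D_1,\ldots,D_k)\}$), but the inductive step as you state it has a gap. You describe combining a process $\Pi$ of type $K'\cup\{\mathbf{e}\}$ with \emph{one} auxiliary subprocess $\Pi'$ of ``an appropriate (lex-greater) type,'' inserted at each would-be $\mathbf{e}$-leaf. That picture is correct for $k=2$ (Lemma \ref{thm:ads_prod_ads} combines a $\{(1,e+1),(d,e)\}$-process with a $\{(1,e)\}$-process), and it is the picture of Figures \ref{fig:trrt_4a}--\ref{fig:trrt_4c} for one particular step; but it is not the right shape of the general inductive step. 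The paper's invariant is the whole set $K_{\vec d}=K\cap\bigl(\{\vec d\}\cup\{\vec d^{+i}\mid \exists j<i,\ d_j\neq 1\}\bigr)$, and its Claim requires simultaneously carrying subprocesses of type $K_{\vec e}$ for \emph{every} $\vec e\in Z(\vec d)=\{\vec d^{\upharpoonright i}\mid d_i\neq 1\}$---one for each coordinate whose entry exceeds $1$. The reason: you cannot predict which coordinate $i_0$ of a type-$\vec d$ leaf will be extensible. If $i_0=i$ and some $j>i$ still has $d_j\neq 1$, extending $p_i$ alone makes no lex progress; you must re-run the subprocess of type $K_{\vec d^{\upharpoonright j}}$ (for the smallest such $j$) with the $i$-th coordinate pinned to extend your existing witness, and only then does $\vec d$ become $\vec d^{+i}$. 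No single ``appropriate'' subprocess covers all eligible $i$ at once.

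You do flag at the end that ``different branches may make different coordinates the extensible one,'' which is the right diagnosis, but you call the resolution ``bookkeeping'' and a ``direct extension of the $k=3$ case.'' It is more than bookkeeping: the paper proves a genuine Claim---from processes of type $K_{\vec e}$ for all $\vec e\in Z(\vec d)$ one gets a process of type $K_{\vec d}^{+}\cup\bigcup_{\vec e\in Z(\vec d)}(K_{\vec e}\setminus\{\vec e\})$---via a nested induction on the vector of heights $(c'_1,\ldots,c'_{m-1})$ with a pigeonhole step to line up which subprocess advanced, and then separately verifies that the resulting type contains $K_{\vec d^+}$. That is the substantive combinatorial content of the lemma, and it is absent from your sketch. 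The transitivity-discipline observation (deactivate, insert, reactivate; never carry two simultaneously-active chains past a common stage) is correctly stated and does carry over from Lemma \ref{thm:ads_prod_ads}, so the soundness of the individual combination steps is not the issue---it is that you have not specified which process of which type is being combined with which, and why the iteration terminates at $\{(D_1,\ldots,D_k)\}$.
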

\begin{proof}
Our conditions are tuples $(p_1,\ldots,p_k)$ where each $p_i$ is homogeneously colored $i$ and there are infinitely many $x$ so that, for each $a\in p_i$, $c^*(a,x)=i$.  Given requirements $R_1,\ldots,R_k$, we must find a condition $(p'_1,\ldots,p'_k)$ with each $p_i\sqsubseteq p'_i$ so that some $R_i$ is forced.

  A \emph{split $k$-tuple} is a tuple $(q_1,\ldots,q_k)$ with each $p_i\sqsubseteq q_i$ and $(q_1)^+=\cdots=(q_k)^+$; it follows that there is at least one $i_0$ so that, taking $p'_{i_0}=q_{i_0}$ and $p'_i=p_i$ for $i\neq i_0$, $(p'_1,\ldots,p'_k)$ is a condition.

  For each $i\leq k$, $r_i=\max\{|\sigma|\mid \sigma\in T_i\}$, and we take $D=\prod_i [1,r_i]$.  The notion of constructing a split tuple of type $(r'_1,\ldots,r'_k)\in D$ and a process of type $D'\subseteq D$ are given by the generalizations of the corresponding notions from the previous subsection.

We can describe a process of type $\{(1,\ldots,1)\}$: for each sequence $(\sigma_1,\ldots,\sigma_k)$ with $\sigma_i\in T_i$ and $|\sigma_i|=1$, we have a node $\langle (\sigma_1,\ldots,\sigma_k)\rangle$ where $K^X_{R_{\{(1,\ldots,1)\}};\langle (\sigma_1,\ldots,\sigma_k)\rangle}((b_0),\vec a_0)$ holds when 
\begin{itemize}
\item $b_0=(e_0^0,p_0^0,\ldots,e_0^k,p_0^k)$,
\item $(p_0^0,\ldots,p_0^k)$ is a split $k$-tuple,
\item $p_i\sqsubset p_0^i$ for each $i\leq k$,
\item $K^{X\oplus p_i}_{R_i;\sigma_i}((e_0^i),\vec a_0)$ for each $i\leq k$.
\end{itemize}

We want to work towards processes of ``larger'' type.  It is clear that, say, finding a split tuple of type $(1,2,2)$ represents more progress than a tuple of type $(1,2,1)$; we work lexicographically, so we also consider a tuple $(1,1,2)$ to be further progress than a tuple of type $(1,3,1)$.  (This is consistent with what we did above, where we considered a slightly longer antichain to be more progress than a much longer chain.)

We place tuples $\vec d$ in reverse lexicographic order, so $\vec d<\vec d'$ if there is an $i$ so that $d_j=d'_j$ for $i<j$, and $d_i<d'_i$.  $(1,\ldots,1)$ is the smallest element in this ordering.  Given some $\vec d\in (d_1,\ldots,d_k)$, we define $\vec d^{+i}=(1,\ldots,1,d_i+1,d_{i+1},\ldots,d_k)$---that is,
\[d^{+i}_j=\left\{\begin{array}{ll}
1&\text{if }j<i\\
d_j+1&\text{if }j=i\\
d_j&\text{if }j>i
\end{array}\right..\]
We define
\[D_{\vec d}=D\cap(\{\vec d\}\cup\{\vec d^{+i}\mid \exists j< i\ d_j\neq 1\}).\]
So $D_{(1,\ldots,1)}=\{(1,\ldots,1)\}$ while 
\[D_{(1,2,1,2,1)}=\{(1,2,1,2,1),(1,1,2,2,1),(1,1,1,3,1),(1,1,1,1,2)\}.\]
We will show by induction on $\vec d$ that we can construct a process of type $D_{\vec d}$.

The basic idea is the same as in the previous subsection: when we want to construct a process of type $(1,1,\ldots,c_i,c_{i+1},\ldots,c_k)$ where $c_i>1$, we take a proces of type $(1,1,\ldots,1,c_{i+1},\ldots,c_k)$; before each step which might be a leaf, we decativate all segments constructed so far and insert a sub-process of type $(r_1,\ldots,r_{i-1},c_i-1,c_{i+1},\ldots,c_k)$.  (Both these processes preceed $(1,1,\ldots,c_i,c_{i+1},\ldots,c_k)$ in our ordering, so we may assume they exist.)  Then we return to the original process, except that we look for the $i$-th sequence in our new tuple to extend the $i$-th sequence created by the inserted sub-process.

Suppose we have constructed a process of type $D_{\vec e}$ for all $\vec e<\vec d$.  Then $\vec d=(c_1,\ldots,c_k)$; since we covered the case of a process of type $\{(1,\ldots,1)\}$ above, we may assume there is some $i$ with $c_i\neq 1$.  Fix $i$ least so that $c_i\neq 1$.

Let $\vec d^-=(1,\ldots,1,c_i-1,c_{i+1},\ldots,c_k)$ and $\vec d_0=(1,\ldots,1,1,c_{i+1},\ldots,c_k)$.  We will obtain our process of type $D_{\vec d}$ as a suitable modification of our process of type $D_{\vec d_0}$.

We begin by copying $T_{D_{\vec d_0}}$: as we construct $T_{D_{\vec d}}$, we define a partial function $f:T_{D_{\vec d}}\rightarrow T_{D_{\vec d_0}}$ and, for each $\upsilon\in \dom(f)$, a monotone function $\pi_\upsilon:[0,|f(\upsilon)|)\rightarrow[0,|\upsilon)$.  We set $f(\langle\rangle)=\langle\rangle$.  Consider some $\upsilon\in\dom(f)$.

If no children of $f(\upsilon)$ are leaves of $D_{\vec d_0}$ which gives a tuple of type $(1,\ldots,1,1,c_{i+1},\ldots,c_k)$ then we copy the children of $f(\upsilon)$: for each child $f(\upsilon)^\frown\langle x\rangle\in T_{D_{\vec d_0}}$, we place a node $\upsilon^\frown\langle x\rangle\in T_{\vec d}$ with $f(\upsilon^\frown\langle x\rangle)=f(\upsilon)^\frown\langle x\rangle$ and we set:
\begin{itemize}
\item $\pi_{\upsilon^\frown\langle x\rangle}=\pi_\upsilon\cup\{(|f(\upsilon)|,|\upsilon|)\}$,
\item for each $i<|f(\upsilon)|$, $d_{D_{\vec d},\upsilon}(\pi_{\upsilon}(i))=d_{D_{\vec d_0},f(\upsilon)}(i)$,
\item $K^X_{D_{\vec d},\upsilon^\frown\langle x\rangle}((b_0,\ldots,b_{|\upsilon|}),\vec a_{|\upsilon|})$ holds exactly when 
\[K^X_{D_{\vec d_0},f(\upsilon)^\frown\langle x\rangle}((b_{\pi_{\upsilon^\frown\langle x\rangle}(0)},\ldots,b_{\pi_{\upsilon^\frown\langle x\rangle}(|f(\upsilon)|)}),\vec a_{\pi_{\upsilon^\frown\langle x\rangle}(|f(\upsilon)|)})\]
 holds.
\end{itemize}

Suppose that some child of $f(\upsilon)$ is a leaf of $T_{D_{\vec d_0}}$ which gives a tuple of type $(1,\ldots,1,1,c_{i+1},\ldots,c_k)$.  Then we place a copy of $T_{D_{\vec d'}}$ below $\upsilon$: for each $\zeta\in T_{D_{\vec d'}}$ we have a node $\upsilon^\frown\zeta\in  T_{D_{\vec d}}$ with:
\begin{itemize}
\item for each $i<|\upsilon|$, $d_{D_{\vec d},\upsilon^\frown\zeta}(i)=0$,
\item for each $i\in [|\upsilon|,\upsilon^\frown\zeta)$, $d_{D_{\vec d},\upsilon^\frown\zeta}(i)=d_{D_{\vec d'},\zeta}(i-|\upsilon|)$,
\item for $\zeta\neq\langle\rangle$, $K^X_{D_{\vec d},\upsilon^\frown\zeta}((b_0,\ldots,b_{|\upsilon|+|\zeta|-1}),\vec a_{|\upsilon|+|\zeta|-1})$ holds exactly when $K^X_{D_{\vec d'},\zeta}((b_{|\upsilon|},\ldots,b_{|\upsilon|+|\zeta|-1}),\vec a_{|\upsilon|+|\zeta|-1})$ holds.
\end{itemize}

Consider a leaf $\zeta$ of $T_{D_{\vec d'}}$, which is witnessed by some tuple $(c'_1,\ldots,c'_k)\in D_{\vec d'}$.  If $(c'_1,\ldots,c'_k)\neq \vec d'$ then also $(c'_1,\ldots,c'_k)\in D_{\vec d}$, so $\zeta$ is a leaf of $T_{D_{\vec d}}$.

So consider a leaf $\zeta$ of $T_{D_{\vec d'}}$ witnessed by $(c'_1,\ldots,c'_k)=\vec d'$, with a corresponding tuple $(\sigma_1,\ldots,\sigma_k)$ with $|\sigma_j|=c'_j$.  Then for each $j<k$ we have a sequence of indices of segments $v^j_0,\ldots,v^j_{c'_j-1}$.  Then
\begin{itemize}
\item for $i<|f(\upsilon)|$, $d_{D_{\vec d},\upsilon^\zeta}(\pi_\upsilon(i))=d_{\vec d_0}(i)$,
\item for $j<k$ and $i<c'_j$, $d_{D_{\vec d},\upsilon^\zeta}(|\upsilon|+v^j_i)=d_{R_j;\sigma_j}(i)$.
\end{itemize}

For each $f(\upsilon)^\frown\langle x\rangle\in T_{D_{\vec d_0}}$ which is not a leaf producing a tuple of type $(1,\ldots,1,1,c_{i+1},\ldots,c_k)$, we have a node $\upsilon^\frown\zeta^\frown\langle x\rangle\in T_{D_{\vec d}}$ with $f(\upsilon^\frown\zeta^\frown\langle x\rangle)=\upsilon^\frown\langle x\rangle$ as in the case above.

Consider some $f(\upsilon)^\frown\langle x\rangle\in T_{D_{\vec d_0}}$ which is a leaf producing a tuple of type $(1,\ldots,1,1,c_{i+1},\ldots,c_k)$ witnessing the nodes $(\tau_1,\ldots,\tau_k)$, where, for $j\geq i+1$, the resulting sequences will come from the segments $w^j_0,\ldots,w^j_{c_j-1}\leq|f(\upsilon)|$.  Then for each $\sigma^*\in T_{R_i}$ an immediate extension of $\sigma_i$, we have a node $\theta=\upsilon^\frown\zeta^\frown\langle (\tau_1,\ldots,\tau_{i-1},\sigma^*,\tau_{i+1},\ldots,\tau_k)\rangle$ where $K^X_{D_{\vec d},\theta}((b_0,\ldots,b_{|\upsilon|+|\zeta|}),\vec a_{|\upsilon|+|\zeta|})$ holds exactly when:
\begin{itemize}
\item $(p^0_{|\upsilon|+|\zeta|},\ldots,p^k_{|\upsilon|+|\zeta|})$ is a split tuple,
\item for $j<i$, $p^j_{|\upsilon|+|\zeta|}$ extends $p^j$,
\item $p^i_{|\upsilon|+|\zeta|}$ extends $p^i_{|\upsilon|+|\zeta|-1}$,
\item for $j>i$, $p^j_{|\upsilon|+|\zeta|}$ extends $p^j_{w_{c_j-1}}$,
\item for $j<i$, $K^{X\oplus p^j_{|\upsilon|+|\zeta|}}_{R_j;\sigma_j}((e^j_{|\upsilon|+|\zeta|}),\vec a_{|\upsilon|+|\zeta|})$ holds,
\item $K^{X\oplus p^i_{|\upsilon|+|\zeta|}}_{R_i;\sigma^*}((e^i_{|\upsilon|+v^i_0},\ldots,e^i_{|\upsilon|+v^i_{c_i-2}},e^i_{|\upsilon|+|\zeta|}),\vec a_{|\upsilon|+|\zeta|})$ holds,
\item for $j>i$, $K^{X\oplus p^j_{|\upsilon|+|\zeta|}}_{R_j;\tau_j}((e^j_{\pi_\upsilon(0)},\ldots,e^j_{\pi_\upsilon(c_j-2)},e^j_{|\upsilon|+|\zeta|}),\vec a_{|\upsilon|+|\zeta|})$.
\end{itemize}
As desired, this yields a process of type $D_{\vec d}$, so we obtain a process of type $D_{(r_1,\ldots,r_k)}$ by induction.
\end{proof}

Combining these as before, we have:
\begin{theorem}
  There is a Turing ideal satisfying $\mathbf{trRT^2_k}$ for all $k$ and $\mathbf{WKL}$ but not $\mathbf{SProdWQO}$.
\end{theorem}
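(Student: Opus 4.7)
The plan is to mimic the proof of Theorem~\ref{thm:ads_sts}, using the four ingredients that have now been established for \SPROD{}s in place of the four ingredients for \SCAC{}-requirements. Specifically: (a) the existence lemma (the specialization of Lemma~\ref{thm:ads_sts_exists_gen} with $I=\{0,1,2\}$ and $J=\{1,2\}$) gives a computable stable $c:[\mathbb{N}]^2\to\{0,1,2\}$, transitive in both colors $1$ and $2$, satisfying every \SPROD{} in $\emptyset$; (b) the diagonalization corollary (the specialization of Lemma~\ref{thm:ads_sts_diat} with $I=\{0,1,2\}$, $J=\{1,2\}$) says that if $c$ satisfies every \SPROD{} in $X$ then every $X$-computable infinite set contains pairs of color $1$ and pairs of color $2$; (c) Lemma~\ref{thm:ads_prod_trrt} allows us to adjoin a transitively-homogeneous set for any $X$-computable $c^*:[\mathbb{N}]^2\to[1,k]$ while preserving satisfaction of all \SPROD{}s; (d) the \SPROD{} version of Lemma~\ref{thm:ads_sts_wkl_gen} (applied with $I=\{0,1,2\}$, $J=\{1,2\}$, which is legitimate since $0\in I\setminus J$) allows us to adjoin an infinite branch through any $X$-computable $\{0,1\}$-branching tree while preserving satisfaction of all \SPROD{}s.

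First I would fix $c$ from (a), and enumerate all pairs $(e,m)$ where $e$ is an index and $m\in\{\mathrm{trRT},\mathrm{WKL}\}$ indicates whether the $e$-th oracle computation is to be read as an instance of $\mathbf{trRT^2_k}$ (for some $k$ also coded by $e$) or as an infinite $\{0,1\}$-branching tree. I would then recursively construct a sequence of sets $I_0, I_1, I_2,\ldots$ as in the proof of Theorem~\ref{thm:ads_sts}, maintaining the invariant that $c$ satisfies every \SPROD{} in $X_n:=\oplus_{i\leq n}I_i$. At stage $n$, given the $n$-th request $(e,m)$, I consult the computation from $X_{n-1}$: if it produces a valid transitive coloring $c^*$ in $k$ colors (for $m=\mathrm{trRT}$), I use Lemma~\ref{thm:ads_prod_trrt} to pick $I_n$ to be an infinite $c^*$-homogeneous set; if it produces an infinite $\{0,1\}$-branching tree (for $m=\mathrm{WKL}$), I use lemma (d) to pick $I_n$ to be an infinite branch. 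In either case the invariant is preserved.

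Let $\mathcal{I}$ be the Turing ideal of all sets computable from some $X_n$. Then $\mathcal{I}$ satisfies $\mathbf{trRT^2_k}$ for every $k$ and satisfies $\mathbf{WKL}$, by the dovetailing. On the other hand, $c$ is a computable (hence $\mathcal{I}$-computable) stable instance of $\mathbf{SProdWQO}$, and by (b), every infinite $B\in\mathcal{I}$ contains pairs realizing color $1$ and pairs realizing color $2$; equivalently, no infinite set in $\mathcal{I}$ omits either of the two transitive colors $1,2$, so $c$ has no solution in $\mathcal{I}$. Hence $\mathcal{I}$ does not satisfy $\mathbf{SProdWQO}$.

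The only step that requires any real care is verifying that the invariant is preserved across all stages. This is entirely analogous to Theorem~\ref{thm:ads_sts}: the preservation lemmata (c) and (d) each produce, from a request over the oracle $X_{n-1}$, a set $I_n$ such that $c$ satisfies every \SPROD{} in $X_{n-1}\oplus I_n = X_n$, and nothing in their statements forces the new oracle to be anything other than a join with the previous one, so the invariant propagates. Since the heavy lifting---the process-combination construction of Lemma~\ref{thm:ads_prod_trrt} and the tree-pruning argument behind the \SPROD{} version of Lemma~\ref{thm:ads_sts_wkl_gen}---has already been done, no further obstacle remains.
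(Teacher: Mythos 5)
Your proposal is correct and matches the paper's own argument: the paper proves this theorem exactly by "combining as before" the \SPROD{} specializations of Lemmas \ref{thm:ads_sts_diat}, \ref{thm:ads_sts_wkl_gen}, \ref{thm:ads_sts_exists_gen} together with Lemma \ref{thm:ads_prod_trrt}, iterating just as in Theorem \ref{thm:ads_sts}. The only thing to keep tidy is the bookkeeping that every instance computable from any $X_n$ (not just the immediately preceding oracle) is eventually assigned a stage, which is the same standard dovetailing the paper itself leaves implicit.
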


\section{Separating $\mathbf{SCAC}$}

In this section we construct a computable instance $\preceq$ of $\mathbf{SCAC}$ and a Turing ideal $\mathcal{I}$ which has no solution to $\preceq$, but does satisfy both $\mathbf{ProdWQO}$ and $\mathbf{WKL}$.

\begin{definition}
  An \emph{\SCAC} is a \ADSSTS{} $R=(T,\{K_\sigma\}_{\sigma\in T},\{d_\sigma\}_{\sigma\in T})$ with range $\{0,1\}$ and transitive in color $1$.
\end{definition}

Lemmata \ref{thm:ads_sts_diat}, \ref{thm:ads_sts_wkl_gen} and \ref{thm:ads_sts_exists_gen} apply with $J=\{1\}$, $I=\{0,1\}$, so we have:
\begin{lemma}
  If $c$ satisfies all \SCAC{}s in $X$, taking $\prec$ to be the partial ordering so that $a\prec b$ iff $a<b$ and $c(a,b)=1$, whenever $B$ is an $X$-computable infinite set, there exist $a,b,c,d\in B$, $a\prec b$, $c<d$ (so $d\not\prec c$) and $c\not\prec d$.
\end{lemma}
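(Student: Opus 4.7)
The plan is to apply Lemma \ref{thm:ads_sts_diat} directly. By definition, an \SCAC{} is an \ADSSTS{} with range $I=\{0,1\}$ transitive in every color of $J=\{1\}$, and we have $0 \in I \setminus J$, so the hypothesis of the generalized diagonalization lemma is met. Therefore Lemma \ref{thm:ads_sts_diat} yields that for any $X$-computable (indeed $X$-c.e.) infinite set $B$, we have $c\upharpoonright[B]^2 \supseteq \{0,1\}$, i.e.\ both colors occur on pairs drawn from $B$.

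From this conclusion, extracting the four elements is routine. A pair $a < b$ in $B$ with $c(a,b) = 1$ provides $a \prec b$. A pair $c' < d$ in $B$ with $c(c',d) = 0$ provides $c' < d$ together with $c' \not\prec d$ (and automatically $d \not\prec c'$, since $\prec$ refines $<$). Renaming $c'$ back to $c$ in the statement of the lemma, the two pairs $(a,b)$ and $(c,d)$ in $B$ witness the required configuration (with no claim of distinctness between the pairs, only within each pair).

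There is no genuine obstacle to overcome here: the entire content is already packaged inside Lemma \ref{thm:ads_sts_diat}, whose notion of \ADSSTS{} with prescribed range and transitivity constraints was engineered precisely so that specializations like the \SCAC{} case fall out as corollaries. The only verification needed is the bookkeeping identification that an \SCAC{} is the specialization of an \ADSSTS{} to $I=\{0,1\}$, $J=\{1\}$.
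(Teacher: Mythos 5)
Your proposal is correct and matches the paper's own treatment: the paper states this lemma as an immediate consequence of Lemma \ref{thm:ads_sts_diat} applied with $I=\{0,1\}$ and $J=\{1\}$ (noting $0\in I\setminus J$), exactly as you do, with the extraction of the pairs colored $1$ and $0$ left as the same routine step you spell out.
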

\begin{lemma}
  If $c$ satisfies all \SCAC{}s in $X$ and $U$ is an infinite $X$-computable $\{0,1\}$-branching tree then there is an infinite branch $\Lambda$ so that $c$ satisfies all \SCAC{}s in $X\oplus\Lambda$.
\end{lemma}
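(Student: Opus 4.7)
The plan is that this lemma is, by design, a direct specialization of the general Lemma \ref{thm:ads_sts_wkl_gen}, so no new argument is required. First I would simply identify the parameters: an \SCAC{} is by definition a \ADSSTS{} whose range is $I=\{0,1\}$ and which is transitive in every color in $J=\{1\}$. Since $0 \in I \setminus J$, the hypotheses of Lemma \ref{thm:ads_sts_wkl_gen} are available.

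Next, I would unwind the definitions just enough to confirm that the hypothesis ``$c$ satisfies every \ADSSTS{} in $X$ with range $I$ transitive in every $j \in J$'' is literally the same as ``$c$ satisfies every \SCAC{} in $X$,'' and likewise on the conclusion side. Given an infinite $X$-computable $\{0,1\}$-branching tree $U$, Lemma \ref{thm:ads_sts_wkl_gen} then produces an infinite branch $\Lambda$ of $U$ such that $c$ satisfies every \ADSSTS{} in $X \oplus \Lambda$ with range $I$ transitive in every $j \in J$, which is exactly the statement that $c$ satisfies every \SCAC{} in $X \oplus \Lambda$.

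There is no genuine obstacle: the whole point of isolating the generalized versions of Lemmata \ref{thm:ads_sts_wkl_gen} and \ref{thm:ads_sts_exists_gen} earlier in the paper was to let the \SCAC{}, \SPROD{}, and similar cases be deduced by this kind of one-line specialization. Thus the proof will consist entirely of noting the correct choice of $I$ and $J$ and invoking Lemma \ref{thm:ads_sts_wkl_gen}.
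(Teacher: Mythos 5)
Your proposal is correct and is exactly how the paper proceeds: it notes that Lemmata \ref{thm:ads_sts_diat}, \ref{thm:ads_sts_wkl_gen}, and \ref{thm:ads_sts_exists_gen} apply with $J=\{1\}$ and $I=\{0,1\}$, and the \SCAC{} case of the $\mathbf{WKL}$ lemma is just the specialization of Lemma \ref{thm:ads_sts_wkl_gen} to these parameters. No additional argument is required or given.
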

\begin{lemma}
  There is a computable stable $c:[\mathbb{N}]^2\rightarrow\{0,1\}$ transitive in the color $1$ satisfying every \SCAC{} in $\emptyset$.
\end{lemma}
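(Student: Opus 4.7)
The plan is to deduce this lemma immediately from Lemma \ref{thm:ads_sts_exists_gen} by specializing the parameters. I set $I=\{0,1\}$ and $J=\{1\}$; these satisfy the required hypothesis $0\in I\setminus J$, so Lemma \ref{thm:ads_sts_exists_gen} applies and produces a computable stable $c:[\mathbb{N}]^2\to\{0,1\}$ transitive in color $1$ which satisfies every \ADSSTS{} in $\emptyset$ with range $\{0,1\}$ transitive in color $1$.

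The only thing to verify is that this matches the stated conclusion. By the definition of \SCAC{} given at the start of this section, an \SCAC{} is exactly a \ADSSTS{} with range $\{0,1\}$ transitive in color $1$, so the class of requirements handled by Lemma \ref{thm:ads_sts_exists_gen} under this choice of $I,J$ coincides precisely with the class of \SCAC{}s. Hence $c$ satisfies every \SCAC{} in $\emptyset$, which is exactly what is needed.

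There is no real obstacle, since the work is already done in Lemma \ref{thm:ads_sts_exists_gen}'s finite-injury priority construction. That argument enumerates the requirements in order, maintains approximating witness-sets $A_{s,r,i}$ for each color $i$, and enforces transitivity in the chosen colors via the closure condition: if $i\in J$, $b\in A_{s,r,i}$, $a<b$, and $c(a,b)=i$, then $a\in A_{s,r,i}$. In our specialization this closure is applied only at $i=1$, which is precisely what transitivity of color $1$ demands, while color $0$ is allowed to be arbitrary. Everything else in the construction---stability, computability of $c$, the finite-injury structure ensuring each requirement is injured only finitely often, and the verification that the limit node $\upsilon_j$ for each requirement $R_j$ is either a leaf or a non-leaf whose negative component is witnessed by the stabilized threshold $t_j$---transfers verbatim. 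So the proof is a one-line appeal to Lemma \ref{thm:ads_sts_exists_gen} together with a definitional unpacking.
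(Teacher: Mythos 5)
Your proof is correct and is exactly what the paper does: the paper obtains this lemma by noting that Lemma \ref{thm:ads_sts_exists_gen} applies with $I=\{0,1\}$ and $J=\{1\}$, and that an \SCAC{} is by definition a \ADSSTS{} with range $\{0,1\}$ transitive in color $1$. Nothing further is needed.
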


In the lemma below, we associate a stable partial ordering $\preceq$ with a coloring with colors $0,1$ so that color $1$ is transitive.  In particular, we say that $\preceq$ satisfies an \SCAC{} when the corresponding coloring does.  So it remains to show:
\begin{lemma}
  Let $\mathcal{I}$ be a countable Turing ideal satisfying $\mathbf{WKL}$, and suppose $\preceq$ satisfies every \SCAC{} in any $X\in\mathcal{I}$ and $c:[\mathbb{N}]^2\rightarrow\{0,1,2\}$ is a coloring in $\mathcal{I}$ with colors $1$ and $2$ transitive.  Then there is an infinite set $S$ so that $c$ restricted to $S$ either omits the color $1$ or omits the color $2$ and $\preceq$ satisfies every \SCAC{} in $X\oplus S$ for any $X\in\mathcal{I}$.
\end{lemma}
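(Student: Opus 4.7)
The plan is to force with conditions of the form $(S_1,S_2,T)$ where: $S_1$ is a finite set with $c\restriction[S_1]^2$ omitting color $1$; $S_2$ is a finite set with $c\restriction[S_2]^2$ omitting color $2$; and $T$ is an infinite set computable from some $X\in\mathcal{I}$, with $\max(S_1\cup S_2)<\min T$, such that for every $y\in T$, $c(x,y)\neq 1$ for all $x\in S_1$ and $c(x,y)\neq 2$ for all $x\in S_2$. Extension is the expected notion: $S_i\subseteq S_i'$, $S_i'\setminus S_i\subseteq T$, $T'\subseteq T$. Writing $\Lambda^i=\bigcup_n S_i^{(n)}$ for the union along a generic descending chain, $\Lambda^1$ is a candidate solution omitting color $1$ and $\Lambda^2$ a candidate omitting color $2$. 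We say $(S_1,S_2,T)$ \emph{forces $R$ on side $i$} if $\preceq$ satisfies $R$ in $X\oplus\Lambda$ for every infinite $\Lambda\supseteq S_i$ with $\Lambda\setminus S_i\subseteq T$ and $c\restriction[\Lambda]^2$ omitting color $i$.

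The central claim is:

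($\ast$) Given a condition $(S_1,S_2,T)$ and \SCAC{}s $R^1,R^2$ computable in some common $X\in\mathcal{I}$, there is an extension that forces $R^1$ on side $1$ or $R^2$ on side $2$.

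Granting $(\ast)$, the proof concludes as in Theorem~\ref{thm:ads_sts} and Lemma~\ref{thm:ads_sts_cac}: enumerate all pairs $(R^1_n,R^2_n)$ of \SCAC{}s computable in elements of $\mathcal{I}$ (using that $\mathcal{I}$ is countable) and construct a descending sequence of conditions meeting each pair while forcing $|S_1^{(n)}|,|S_2^{(n)}|\to\infty$. A pigeonhole argument then shows that one of $\Lambda^1,\Lambda^2$ is the desired $S$: if $\Lambda^1$ fails some $R^1$, every pair $(R^1,R^2)$ must have been forced on side $2$, so $\Lambda^2$ satisfies all \SCAC{}s in $X\oplus\Lambda^2$ for each relevant $X$.

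To establish $(\ast)$, we design a new \SCAC{} $R^*$ whose satisfaction by $\preceq$ in $X\oplus T$ yields the required extension. Its structure mirrors that built in Lemma~\ref{thm:ads_sts_cac}: each node $\upsilon\in T^*$ carries, for each side $i\in\{1,2\}$, a pointer $\sigma^\upsilon_i\in T^i$ and a monotone function $\pi^\upsilon_i\colon\dom(\sigma^\upsilon_i)\to\dom(\upsilon)$, together with auxiliary data encoding candidate finite extensions $S_i^\upsilon\supseteq S_i$ inside $T$ that witness $\Delta^{X\oplus S_i^\upsilon}_{R^i;\sigma^\upsilon_i}$. The simple block statements $K^X_\upsilon$ assert the appropriate extensions are compatible with $T$ and realize these positive requirement components; the range of $d_\upsilon$ is $\{0,1\}$, transitive in color $1$, inherited from the transitivity of $R^1,R^2$. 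Since $\preceq$ satisfies $R^*$ by hypothesis, some $\Theta^{X\oplus T}_{R^*;\upsilon}(\preceq)$ holds. If $\upsilon$ is a leaf, one of the witnessed extensions $S^\upsilon_i$ completes a forcing extension directly. If $\upsilon$ is internal, the negative clause (no further witnesses above threshold $t$) says that the set of $y\in T$ past $t$ that could extend either side is finite on the ``problematic'' side, so restricting $T$ to a cofinite subreservoir forces one of $R^1,R^2$.

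The main obstacle is the combinatorial design of $R^*$. Unlike Lemma~\ref{thm:ads_sts_cac}, where the two sides (chains versus antichains) played asymmetric roles and required a trial/partnered distinction for the antichain side, here both sides are structurally symmetric---each is simply the task of extending a set avoiding one transitive color of $c$. The resulting construction is a \emph{symmetric} analogue of the partnered antichain argument, with trial solutions now present on both sides, abandoned and reactivated in parallel, and each witness placed in a block so that $d_\upsilon$'s restraint on color $1$ is maintained across the full history. The finiteness of $R^*$ follows from a lexicographic potential measuring joint progress on both sides, as in Lemmata~\ref{thm:ads_sts_cac} and~\ref{thm:ads_prod_trrt}.
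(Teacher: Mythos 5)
Your proposal has the right global shape (force with Mathias-style conditions, reduce to an \SCAC{} whose satisfaction yields the extension, diagonalize over a countable list), but it misses the two ideas that make this particular lemma hard, and the claim that it is a ``symmetric analogue'' of Lemma~\ref{thm:ads_sts_cac} is where the gap hides.

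First, your conditions $(S_1,S_2,T)$ with a single joint reservoir $T$ quietly adopt exactly what the paper identifies as the naïve (and insufficient) view of color $0$: a point $y\in T$ with $c(x,y)=0$ for all $x\in S_1\cup S_2$ looks equally usable on either side, but because $0$ is not transitive such a point has made no commitment, and it may later turn out to behave homogeneously in the wrong color. The danger is not in producing the sets $\Lambda^1,\Lambda^2$ --- simple Mathias splitting of the reservoir handles that --- but in translating the forcing into an \SCAC{} $R^*$: when you find $\preceq$-witnesses at a node $\upsilon$, you must place them in a block and commit to a restraint $d_\upsilon(i)\in\{0,1\}$ right then, subject to transitivity in color $1$ across all of $\upsilon$'s ancestry. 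If the witnesses you collect were gathered under the assumption that some $c$-color-$0$ points serve side $1$, and those points later ``sharpen'' to color $2$, the restraints you already made are unrepairable. The paper's fix is to carry a \emph{prediction} $r$ in the condition --- conditions are $(p,q,r,X)$ with the reservoir $X$ required to agree with $r$ on $\dom(r)$ --- and to let the tree of $R^*$ branch over refinements $r\leq r'$, with $U_{r_S}$ controlling which $(\sigma,\tau)$ pairs can become active. Your sketch has no analogue of this, so the step ``$K^X_\upsilon$ assert the appropriate extensions are compatible with $T$'' cannot actually be carried out consistently with the range-$\{0,1\}$, transitive-in-$1$ constraint on $d_\upsilon$.

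Second, $\mathbf{WKL}$ appears in the hypotheses and is genuinely used, but never appears in your sketch of $(\ast)$. In the paper it enters at the point where the search for the next perpendicular block fails: one builds a finitely branching tree of ``bad partitions'' of initial segments of the reservoir and uses $\mathbf{WKL}$ to pick an infinite branch inside $\mathcal{I}$, which then yields the forcing witness on one side. Without something playing this role your argument either is incomplete or would prove the lemma for arbitrary countable $\mathcal{I}$ not satisfying $\mathbf{WKL}$, which the paper does not assert. Relatedly, the paper's construction is deliberately \emph{not} symmetric: it distinguishes a ``backbone'' of paired sequences from ``perpendicular'' solutions built out of prediction-controlled blocks, with a main induction on backbone progress and a side induction on the perpendicular, closely paralleling the partnered/trial asymmetry of Lemma~\ref{thm:ads_sts_cac} rather than symmetrizing it away.
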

\begin{proof}
  The new complication here is that, on the one hand, we have to work with \SCAC{}s, so we have to make sure our construction satisfies the transitivity requirement.  On the other hand, we only have a limited amount of transitivity to work with, because the color $c$ can assign the value $0$.

  Let us say $c$ is \emph{$01$ on $p$} if, for all $x,y\in p$, $c(x,y)\in\{0,1\}$.  Similarly, let us say $c$ is \emph{$02$ on $q$} if, for all $x,y\in q$, $c(x,y)\in\{0,2\}$.

  A \emph{prediction} is a function $\rho:S\rightarrow\{0,1,2\}$ for some set $S$ such that if $a,b\in S$, $a<b$, and $c(a,b)=\rho(b)\neq 0$ then $\rho(a)=\rho (b)$.  We say $\rho\leq \rho '$ if $\dom(\rho)\subseteq\dom(\rho')$ and whenever $\rho(a)\neq 0$, $\rho'(a)=\rho(a)$.  Note that, for any $x$, the function $\rho^x_S$ given by $\rho^x_S(s)=c(s,x)$ is a prediction.

  We work with conditions $(p,q,X)$ such that:
  \begin{itemize}
  \item $c$ is $01$ on $p$,
  \item $c$ is $02$ on $q$,
  \item for all $x,x'\in X$ and all $a\in p$, $c(a,x)=c(a,x')\in\{0,1\}$,
  \item for all $x,x'\in X$ and all $b\in q$, $c(b,x)=c(b,x')\in\{0,2\}$, and
  \item $X$ is an infinite set in $\mathcal{I}$.
  \end{itemize}

  Let us say $(p,q, X)$ \emph{forces $R^+$ on the $01$-side} if whenever $\Lambda$ is an infinite sequence with $p\sqsubseteq\Lambda$, $(\Lambda\setminus p)\subseteq X$, and $c(a,b)\in\{0,1\}$ for $a,b\in\Lambda$, $\prec$ satisfies $R^+$ in $X\oplus\Lambda$.  Similarly, we say $(p,q, X)$ \emph{forces $R^-$ on the $02$-side} if whenever $\Lambda$ is an infinite sequence with $p\sqsubseteq\Lambda$, $(\Lambda\setminus p)\subseteq X$, and $c(a,b)\in\{0,2\}$ for $a,b\in\Lambda$, $\prec$ satisfies $R^-$ in $X\oplus\Lambda$.

  Using Lemma \ref{thm:linearize}, it suffices to show:
\begin{quote}
  $(\ast)$ Suppose $R^+$ are $R^-$ are linear requirements and $(p,q, X)$ is a condition.  Then there is a condition $(p',q', X')$ extending $(p,q, X)$ which either forces $R^+$ on the $01$-side or forces $R^-$ on the $02$-side.
\end{quote}

Let us show $(*)$.  Fix $(p,q, X)$ and requirements $R^+=(T^+,\{L_\sigma\}_{\sigma\in T^+},\{d^+_\sigma\}_{\sigma\in T^+})$ and $R^-=(T^-,\{M_\tau\}_{\tau\in T^-},\{d^-_\tau\}_{\tau\in T^-})$.  Below we always assume elements not in $p,q$ are chosen from $X$.

Suppose we have the fortune to find $p',q'$ extending $p,q$ and witnessing nodes of $T^+$ and $T^-$ so that there is an $a$ such that, for each $x\in p'\setminus p$, $c(a,x)=2$, while for each $y\in q'\setminus q$, $c(a,y)=1$.  Then for any $z>\max\{p',q'\}$, we must either have $c(x,z)\in\{0,1\}$ for all $x\in p'$ or $c(y,z)\in \{0,2\}$ for all $y\in q'$.  Then $p',q'$ function like a split pair: every future $z$ is compatible with either $p'$ or $q'$.  If we could find such pairs consistently, we could carry out a construction like the one in the proof of Theorem \ref{thm:ads_prod_ads}.

So suppose we have a finite set $S$, an extension $p'$ of $p$ witnessing a node of $T^+$, an extension $q'$ of $q$ witnessing a node of $T^-$, and a prediction $\rho$ on $S$ so that for each $x\in p'\setminus p$, $\rho^x_S=\rho$ and for each $y\in q'\setminus q$, $\rho^y_S=\rho$.  Now suppose there are also infinitely many $z$ such that, for some $a\in S$ with $\rho(a)\neq 0$, $c(a,z)\neq \rho (a)$.  Then there must be a single such $a\in S$ with $\rho(a)\neq 0$---without loss of generality, let us assume $\rho (a)=1$---so that there are infinitely many $z$ with $c(a,z)=2$.  Then either we find some $q''$ witnessing a node of $T^-$, putting us in the setting of the previous paragraph, or the set of such $z$ contains no such $q''$ (and therefore forces $R^-$ on the $02$-side).

Our strategy will be to have an ``inner construction'' and an ``outer construction''.  During the inner construction, we will begin constructing extensions of $p$ and $q$ in segments.  Once we have constructed some segments all of whose elements belong to some set $S$, we will look for sequences $p'$ and $q'$ inducing a common prediction $\rho$ on $S$; we then use $p'$ and $q'$ to ``guarantee'' the prediction $\rho$---we divide all $z$ into those with $\rho\leq \rho^z_S$ and those with $\rho\not\leq \rho^z_S$.  When $\rho\leq \rho^z_S$, we continue with the inner construction, using these $z$ to look for segments extending the sequences in $S$.  But if we have many points with $\rho\not\leq \rho^z_S$, we may find either a pair $(p',q'')$ or $(p'',q')$ as in the previous paragraph; in this case we use $(p',q'')$ to extend the outer construction.  When the outer construction extends, we discard all progress on the inner construction and begin a new inner construction.

We first build a tree represneting the outer construction, essentially using the construction of Lemma \ref{thm:ads_prod_ads}, with a minor adjustment---the two halves of our ``split pairs'' will not share blocks of witnesses---and some additional information to account for the inner construction we discuss later.

All the changes are present in the construction of a process of type $\{(1,\ldots,1)\}$: we construct a tree with three non-root nodes, $\langle 0 \rangle$, $\langle 0,1\rangle$, and $\langle 0,2\rangle$.  $K^X_{\{(1,\ldots,1)\},\langle 0\rangle}((b_0),\vec a_0)$ will hold when $b_0=(d_0,e_0,p_0,f_0,q_0)$ and:
\begin{itemize}
\item $p\sqsubset p_0$,
\item $q\sqsubset q_0$,
\item whenever $a<d_0$ and $x,y\in (p_0\setminus p)\cup(q_0\setminus q)$, $c(a,x)=c(a,y)$,
\item $L^{X\oplus p_0}_{\langle 0\rangle}(e_0,\vec a_0)$,
\item $M^{X\oplus q_0}_{\langle 0\rangle}(f_0,\vec a_0)$.
\end{itemize}

$K^X_{\{(1,\ldots,1)\},\langle 0,1\rangle}((b_0,b_1),\vec a_1)$ will hold when $b_0=(d_0,e_0,p_0,f_0,q_0)$ and $b_1=(e_1,p_1)$ and:
\begin{itemize}
\item $p\sqsubseteq p_1$,
\item there is an $a<d_0$ so that for each $x\in (p_1\setminus p)$ and $y\in (q_0\setminus q)$, $c(a,x)=2$ while $c(a,y)=1$,
\item $L^{X\oplus p_1}_{\langle 0\rangle}(e_1,\vec a_1)$.
\end{itemize}
This is, in the node $\langle 0,1\rangle$, the pair $(p_1,q_0)$ form an effective split pair.  

Symmetrically,
$K^X_{\{(1,\ldots,1)\},\langle 0,2\rangle}((b_0,b_1),\vec a_1)$ will hold when $b_0=(d_0,e_0,p_0,f_0,q_0)$ and $b_1=(f_1,q_1)$ and:
\begin{itemize}
\item $q\sqsubseteq q_1$,
\item there is an $a<d_0$ so that for each $x\in (p_0\setminus p)$ and $y\in (q_1\setminus q)$, $c(a,x)=2$ while $c(a,y)=1$,
\item $M^{X\oplus q_1}_{\langle 0\rangle}(f_1,\vec a_1)$.
\end{itemize}

We need to work in two steps---in the first step we find a pair $(p_0,q_0)$, but these don't properly form a split pair because we could easily have neither be extendible.  In the second step we replace one of $p_0$ and $q_0$ with a new sequence---we work with either $(p_1,q_0)$ or $(p_0,q_1)$---because these form a genuine split pair.

We then form compound processes using the same construction as Lemma \ref{thm:ads_prod_ads}, but starting from our new three node basic process.  The result is $R_{(r,s)}=(T_{(r,s)},\{K_{(r,s),\zeta}\},\{d_{(r,s),\zeta}\})$ such that:
 \begin{itemize}
 \item whenever $\zeta\in T_{(r,s)}$ is a leaf, $\Delta^X_{(r,s),\zeta}(c,(b_0,\ldots,b_{|\zeta|-1}),\vec a_0,\ldots,\vec a_{|\zeta|-1})$ implies that each $b_i$ has the form $(d_i,e_i,p_i,f_i,q_i)$, $(e_i,p_i)$, or $(e_i,q_i)$ where:
  \begin{itemize}
  \item $p\sqsubseteq p_i$,
  \item $q\sqsubseteq q_i$,
  \item there are disjoint sequences $(v_0,\ldots,v_{r-1})$ and $(w_0,\ldots,w_{s-1})$ such that, taking $\sigma$ and $\tau$ to be the branches of length $r$ and $s$, respectively,:
    \begin{itemize}
    \item $\Delta^{X\oplus p_i}_{T^+;\sigma}(c,(e_{v_0},\ldots,e_{v_{r-1}}),\vec a_{v_0},\ldots,\vec a_{v_{r-1}})$,
    \item $\Delta^{X\oplus q_i}_{T^-;\tau}(c,(f_{w_0},\ldots,f_{w_{s-1}}),\vec a_{w_0},\ldots,\vec a_{w_{s-1}})$,
    \end{itemize}
  \end{itemize}
\item whenever $\zeta\in T_{(r,s)}$ is not a leaf and $|\zeta|$ is odd, $\Theta_{(r,s),\zeta}(c)$ implies that there is either a $p'$ so that $(p',q, \rho,X)$ forces $R^+$ on the $01$-side, or a $q'$ so that $(p,q', \rho,X)$ forces $R^-$ on the $02$-side, 
\item whenever $\zeta\in T_{(r,s)}$ is not a leaf and $|\zeta|$ is even, $\Theta_{(r,s),\zeta}(c)$ implies that either:
  \begin{itemize}
  \item for every $a<d_{|\zeta|-2}$ there is an $i\in\{1,2\}$ such that there are only finitely many $z>a$ with $c(a,z)=i$,
  \item there is an $a<d_{|\zeta|-2}$ and a $p'$ so that $(p',q, \rho,\{x\in X\mid c(a,x)=2\})$ forces $R^+$ on the $01$-side, or
  \item there is an $a<d_{|\zeta|-2}$ and a $q'$ so that $(p,q', \rho,\{x\in X\mid c(a,x)=1\})$ forces $R^-$ on the $02$-side.
  \end{itemize}
 \end{itemize}

Note that the final case is why $R_{(r,s)}$ does not complete the proof of the lemma: we might indeed be in the case where, for every $a<d_{|\zeta|-2}$ there is an $i\in\{1,2\}$ such that there are only finitely many $z>a$ with $c(a,z)=i$.  So we will need to interpolate additional steps into $R_{(r,s)}$ to account for this possibility.

We will construct our actual requirement $R=(T,\{K_\upsilon\},\{d_\upsilon\})$.  Along with our construction, we define a partial function $f:T\rightarrow T_{(r,s)}$ and, for $\upsilon\in\dom(f)$, a function $\pi_\upsilon:[0,|f(\upsilon)|)\rightarrow[0,\upsilon)$.

We begin by setting $f(\langle\rangle)=\langle\rangle$.

Suppose we have a node $\upsilon$ with $|f(\upsilon)|$ even.  We construct a subtree extending $\upsilon$ as our ``inner construction''.  The precise definition requires some tedious bookkeeping, but the basic idea is that we take a pair $p',q'$ active at the node $f(\upsilon)$ and we search for a set $S$ of points which could be included in extensions of both $p'$ and $q'$, and so that for every prediction $\rho_S$ on $S$, there is either a $p_0$ or a $q_0$ witnessing the corresponding node of $T^+$ or $T^-$.  If we cannot find such an $S$ (despite there being infinitely many points which could be included in such a set), we can use weak K\"onig's lemma to find a partition of points in which there are either no such $p_0$ or no such $q_0$.

If we find such an $S$, we then search for extensions $p'',q''$ of $S$ so that every $x\in (p''\setminus p')\cup (q''\setminus q')$ induces the same prediction $\rho_S$ on $S$.  If we find $p'',q''$, these are associated to a child of $f(\upsilon)$.

Now consider later points $z$.  If $\rho_S\not\leq \rho^z_S$ then we can use $z$ as a point to look for a split pair $(p'',q^*)$ or $(p^*,q'')$; if we find such a pair, we can extend to a descendent of $f(\upsilon)$ and throw away the inner construction.  If we have $\rho_S\leq \rho^z_S$, we begin working towards a new set $S'$ in which we look for either $p_1$ or $q_1$ extending $p_0$ or $q_0$.  If we find such an $S'$, we then search for replacements of $p''$ and $q''$ which also agree on a prediction on $S'$.

The inner construction then iterates this process: we keep extending $p_i$ or $q_i$, and each time we do, we look for new ``guards'' $p''$ and $q''$.  In order to avoid transitivity issues, when we extend $p_i$, we discard our progress on $q_i$.

Formally, in this subtree, enumerate the children of $f(\upsilon)$ as $\zeta_1,\ldots,\zeta_z$.  Each node $\upsilon'$ in this subtree will be associated to tuples $(r^{\upsilon'}_1,s^{\upsilon'}_1,t^{\upsilon'}_i,r^{\upsilon'}_2,s^{\upsilon'}_2,t^{\upsilon'}_i,\ldots,r^{\upsilon'}_z,s^{\upsilon'}_z,t^{\upsilon'}_z)$ with $r^{\upsilon'}_j\leq r$, $s^{\upsilon'}_j\leq s$, and $t^{\upsilon'}_z\in\{0,1\}$, and to sequences of distinct values $v^{\upsilon'}_{j,0},\ldots,v^{\upsilon'}_{j,r_j-1},w^{\upsilon'}_{j,0},\ldots,w^{\upsilon'}_{s_j-1},u_{j}$ where $v^{\upsilon'}_{j,i}<v^{\upsilon'}_{j,i+1}<w^{\upsilon'}_{j,0}$ and $w^{\upsilon'}_{j,i}<w^{\upsilon'}_{j,i+1}<u^{\upsilon'}_{j}<v^{\upsilon'}_{j+1,0}$ and where $u^{\upsilon'}_{j}$ is only present if $r^{\upsilon'}_j+s^{\upsilon'}_j+t^{\upsilon'}_j>0$.  We associate $\upsilon$ with the tuple $(0,\ldots,0)$ (and therefore the sequences $v^{\upsilon'}_{j,i},w^{\upsilon'}_{j,i},u^{\upsilon'}_j$ are empty).  We order the tuples lexicographically and guarantee that the assigned tuples will not decrease along branches of the tree.

We will need to construct three kinds of nodes: the nodes in which we search for sets $S$, the nodes in which we search for guards $p'',q''$ (extending $p'_j,q'_j$), and the nodes in which we search for for the split pair $p^*$ or $q^*$ (also extending $p'_j$ or $q'_j$).  Which of these nodes are children of a given node depends on the parameters so far.  When $t_j=0$, we need to search for a set $S$.  When $t_j=1$, we need to search for guards.  (Further, when $s^{\upsilon'}_j>0$, we will also need to look for alternative, more restrictive guards.)  When $r^{\upsilon'}_j>0$ or $s^{\upsilon'}_j>0$, we must have already found guards, so we also need to look for the corresponding split pair.

Consider a node $\upsilon'$ and we construct the children of $\upsilon'$.

First, consider some $j$ with $t^{\upsilon'}_j=0$.  In this case we will have $b_{|\upsilon'|}=(S_{|\upsilon'|},E_{|\upsilon'|},\lambda_{|\upsilon'|})$.  Let $S^-=\bigcup_{i<r^{\upsilon'}_j}S_{v^{\upsilon'}_{j,i}}\cup\bigcup_{i<s^{\upsilon'}_j}S_{w^{\upsilon'}_{j,i}}$.  If $r^{\upsilon'}_j=s^{\upsilon'}_j=0$, $S^-=\emptyset$ and $\rho_{S^-}$ is trivial; otherwise let $\rho_{S^-}=\rho^x_{S^-}$ for any $x\in p''_{u_j}$.  $\upsilon'$ has a child $\upsilon'{}^\frown\langle (0,j)\rangle$ so that $K^X_{\upsilon'{}^\frown\langle (0,j)\rangle}$ holds when:
\begin{itemize}
\item $S_{|\upsilon'|}$ is a finite set,
\item for every prediction $\rho_S$ on $S^-\cup S_{|\upsilon'|}$ with $\rho_{S^-}\leq \rho_S\upharpoonright S^-$, $\lambda_{|\upsilon'|}(\rho_S)$ is a sequence and $E_{|\upsilon'|}(\rho_S)$ is a number such that either:
  \begin{itemize}
  \item $c$ is $01$ on $\lambda_{|\upsilon'|}(\rho_S)$ and:
    \begin{itemize}
    \item  (if $r^{\upsilon'}_j=0$) $p\sqsubset \lambda_{|\upsilon'|}(\rho_S)$ or (if $r^{\upsilon'}_j>0$) $\lambda_{v^{\upsilon'}_{r_j-1}}(\rho_S\upharpoonright S^-)\sqsubset \lambda_{|\upsilon'|}(\rho_S)$, and
    \item letting $\sigma$ be the branch of length $r^{\upsilon'}_j+1$, $L^{X\oplus \lambda_{|\upsilon'|}(\rho_S)}_{\sigma}((E_{v^{\upsilon'}_{0}}(\rho_S\upharpoonright S_{v^{\upsilon'}_0}),\ldots,E_{v^{\upsilon'}_{r_j-1}}(\rho_S\upharpoonright S_{v^{\upsilon'}_{r_j-1}}),E_{|\upsilon'|}(\rho_S)),\vec a)$,
    \end{itemize}
    or,
  \item $c$ is $02$ on $F_{|\upsilon'|}(\rho_S)$ and:
    \begin{itemize}
    \item (if $s^{\upsilon'}_j=0$) $q\sqsubset \lambda_{|\upsilon'|}(\rho_S)$ or (if $s^{\upsilon'}_j>0$) $\lambda_{w^{\upsilon'}_{s_j-1}}(\rho_S\upharpoonright S^-)\sqsubset \lambda_{|\upsilon'|}(\rho_S)$, and
    \item letting $\tau$ be the branch of length $s^{\upsilon'}_j+1$, $M^{X\oplus \lambda_{|\upsilon'|}(\rho_S)}_{\tau}((E_{v^{\upsilon'}_{0}}(\rho_S\upharpoonright S_{v^{\upsilon'}_0}),\ldots,E_{v^{\upsilon'}_{r_j-1}}(\rho_S\upharpoonright S_{v^{\upsilon'}_{r_j-1}}),E_{|\upsilon'|}(\rho_S)),\vec a)$.
    \end{itemize}
  \end{itemize}
\end{itemize}
We copy the parameters $r_{j'}$, $s_{j'}$ for $j'\leq j$ and reset these to $0$ for $j'>j$; that is:
\begin{itemize}
\item For $j'\leq j$, we set $r^{\upsilon'{}^\frown\langle 0,j\rangle}_{j'}=r^{\upsilon'}_{j'}$, $s^{\upsilon'{}^\frown\langle 0,j\rangle}_{j'}=s^{\upsilon'}_{j'}$, $v^{\upsilon'{}^\frown\langle 0,j\rangle}_{j',i}=v^{\upsilon'}_{j',i}$, and $w^{\upsilon'{}^\frown\langle 0,j\rangle}_{j',i}=w^{\upsilon'}_{j',i}$.
\item For $j'<j$, $t_{j'}^{\upsilon'{}^\frown\langle 0,j\rangle}=t_{j'}^{\upsilon'}$ and $u^{\upsilon'{}^\frown\langle 0,j\rangle}_{j'}=u^{\upsilon'}_{j'}$.
\item $t_j^{\upsilon'{}^\frown\langle 0,j\rangle}=1$ and $u^{\upsilon'{}^\frown\langle 0,j\rangle}_j=|\upsilon'|$. 
\item For $j'>j$, $r^{\upsilon'{}^\frown\langle 0,j\rangle}_{j'}=s^{\upsilon'{}^\frown\langle 0,j\rangle}_{j'}=t^{\upsilon'{}^\frown\langle 0,j\rangle}_{j'}=0$. 
\end{itemize}
We set $d_{\upsilon'^\frown\langle (0,j)\rangle}(i)=d_{\upsilon'}(i)$ for $i<|\upsilon'|$ and $d_{\upsilon'^\frown\langle (0,j)\rangle}(|\upsilon'|)=0$. 

Next, consider some $j$ with $t^{\upsilon'}_j=1$.  Let $S^-=\bigcup_{i<r^{\upsilon'}_j}S_{v^{\upsilon'}_{j,i}}\cup\bigcup_{i<s^{\upsilon'}_j}S_{w^{\upsilon'}_{j,i}}$.  We have two children $\upsilon'{}^\frown\langle (1,j,b)\rangle$ for $b\in\{0,1\}$ with $f(\upsilon'{}^\frown\langle (1,j,b)\rangle)=\zeta_j$.  $K^X_{\upsilon'{}^\frown\langle (1,j,0)\rangle}$ holds when $K^X_{(r,s),\zeta_j}$ holds and there is any $\rho_{S'}\geq \rho^x_{S'}$ (for some, and therefore every, $x\in p_{|\upsilon'|}\cup q_{|\upsilon'|}$) so that $c$ is $02$ on $F_{|\upsilon'|}(\rho_{S'})$.   $K^X_{\upsilon'{}^\frown\langle (1,j,1)\rangle}$ holds when $K^X_{(r,s),\zeta_j}$ holds and for every $\rho_{S'}\geq \rho^x_{S'}$ (for some, and therefore every, $x\in p_{|\upsilon'|}\cup q_{|\upsilon'|}$), $c$ is $01$ on $F_{u_j^{\upsilon'}}(\rho_{S'})$.  

In this case:
\begin{itemize}
\item For $j'<j$, $r^{\upsilon'{}^\frown\langle 1,j,b\rangle}_{j'}=r^{\upsilon'}_{j'}$, $s^{\upsilon'{}^\frown\langle 1,j,b\rangle}_{j'}=s^{\upsilon'}_{j'}$, $v^{\upsilon'{}^\frown\langle 1,j,b\rangle}_{j',i}=v^{\upsilon'}_{j',i}$, $w^{\upsilon'{}^\frown\langle 1,j,b\rangle}_{j',i}=w^{\upsilon'}_{j',i}$, and $u^{\upsilon'{}^\frown\langle 1,j,b\rangle}_{j'}=u^{\upsilon'}_{j'}$.  
\item $r^{\upsilon'{}^\frown\langle 1,j,1\rangle}_j=r^{\upsilon'}_{j}+1$, $v^{\upsilon'{}^\frown\langle 1,j,1\rangle}_{j,i}=v^{\upsilon'}_{j,i}$ for $i<r^{\upsilon'}_j$, and $v^{\upsilon'{}^\frown\langle 1,j,1\rangle}_{j,r^{\upsilon'}_j}=u^{\upsilon'}_j$. 
\item $s^{\upsilon'{}^\frown\langle 1,j,0\rangle}_j=s^{\upsilon'}_j+1$, $w^{\upsilon'{}^\frown\langle 1,j,0\rangle}_{j,i}=w^{\upsilon'}_{j,i}$ for $i<s^{\upsilon'}_j$, and $w^{\upsilon'{}^\frown\langle 1,j,0\rangle}_{j,s^{\upsilon'}_j}=u^{\upsilon'}_j$. 
\item $s^{\upsilon'{}^\frown\langle 1,j,1\rangle}_j=0$ and $u^{\upsilon'{}^\frown\langle 1,j,b\rangle}_j=u^{\upsilon'}_j$.
\item $r^{\upsilon'{}^\frown\langle 1,j,0\rangle}_{j}=r^{\upsilon'}_j$ and $v^{\upsilon'{}^\frown\langle 1,j,0\rangle}_{j,i}=v^{\upsilon'}_{j,i}$. 
\item For $j'>j$, $r^{\upsilon'{}^\frown\langle 1,j,b\rangle}_{j'}=s^{\upsilon'{}^\frown\langle 1,j,b\rangle}_{j'}=s^{\upsilon'{}^\frown\langle 1,j,b\rangle}_{j'}=0$.
\end{itemize}

We set $d_{\upsilon'{}^\frown\langle (1,j,0)\rangle}(w^{\upsilon'{}^\frown\langle (1,j,0)\rangle}_{j,i})=d^-_\tau(i)$ (where $|\tau|=s^{\upsilon'{}^\frown\langle (1,j,0)\rangle}_)$.  We set $d_{\upsilon'{}^\frown\langle (1,j,1)\rangle}(v^{\upsilon'{}^\frown\langle (1,j,1)\rangle})=d^+_\sigma(i)$ (where $|\sigma|=r^{\upsilon'{}^\frown\langle (1,j,1)\rangle}_j$).  Otherwise $d_{\upsilon'{}^\frown\langle (1,j,b)\rangle}(i)=d_{\upsilon'}(i)$ where consistent with transitivity requirements, and as required by transitivity otherwise.

Consider some $j$ with $s^{\upsilon'}_j>0$.  Let $S^-=\bigcup_{i<r^{\upsilon'}_j}S_{v^{\upsilon'}_{j,i}}\cup\bigcup_{i<s^{\upsilon'}_j}S_{w^{\upsilon'}_{j,i}}$.  We have a child $\upsilon'{}^\frown\langle (2,j)\rangle$ with $f(\upsilon'^\frown\langle (2,j)\rangle)=\zeta_j$.  (This case is nearly identical to the $\upsilon'{}^\frown\langle (1,j,1)\rangle$ case above.)  $K^X_{\upsilon'{}^\frown\langle (2,j)\rangle}$ holds when $K^X_{(r,s),\zeta_j}$ holds and for every $\rho_{S'}\geq\rho^x_{S'}$ (for some, and therefore every, $x\in p_{|\upsilon'|}\cup q_{|\upsilon'|}$), $c$ is $01$ on $F_{u_j^{\upsilon'}}(\rho_{S'})$.

\begin{itemize}
\item For $j'<j$, $r^{\upsilon'{}^\frown\langle 2,j\rangle}_{j'}=r^{\upsilon'}_{j'}$, $s^{\upsilon'{}^\frown\langle 2,j\rangle}_{j'}=s^{\upsilon'}_{j'}$, $v^{\upsilon'{}^\frown\langle 2,j\rangle}_{j',i}=v^{\upsilon'}_{j',i}$, $w^{\upsilon'{}^\frown\langle 2,j\rangle}_{j',i}=w^{\upsilon'}_{j',i}$, and $u^{\upsilon'{}^\frown\langle 2,j\rangle}_{j'}=u^{\upsilon'}_{j'}$.
\item $r^{\upsilon'{}^\frown\langle 2,j\rangle}_j=r^{\upsilon'}_{j}+1$, $v^{\upsilon'{}^\frown\langle 2,j\rangle}_{j,i}=v^{\upsilon'}_{j,i}$ for $i<r^{\upsilon'}_j$, and $v^{\upsilon'{}^\frown\langle 2,j\rangle}_{j,r^{\upsilon'}_j}=u^{\upsilon'}_j$. 
\item $s^{\upsilon'{}^\frown\langle 2,j\rangle}_j=0$ and $u^{\upsilon'{}^\frown\langle 2,j\rangle}_j=u^{\upsilon'}_j$.
\item  For $j'>j$, $r^{\upsilon'{}^\frown\langle 2,j\rangle}_{j'}=s^{\upsilon'{}^\frown\langle 2,j\rangle}_{j'}=s^{\upsilon'{}^\frown\langle 2,j\rangle}_{j'}=0$.
\end{itemize}
We set $d_{\upsilon'{}^\frown\langle (1,j,1)\rangle}(v^{\upsilon'{}^\frown\langle (1,j,1)\rangle})=d^+_\sigma(i)$ (where $|\sigma|=r^{\upsilon'{}^\frown\langle (1,j,1)\rangle}_j$).  Otherwise $d_{\upsilon'{}^\frown\langle (1,j,b)\rangle}(i)=d_{\upsilon'}(i)$ where consistent with transitivity requirements, and as required by transitivity otherwise.

For the final case, consider a $j$ with $t^{\upsilon'}_j=0$ and $r^{\upsilon'}_j+s^{\upsilon'}_j>0$.  Then we have two children $\upsilon'{}^\frown\langle (3,j,b)\rangle$ for $b\in\{1,2\}$.  We will set $f(\upsilon'{}^\frown\langle (3,j,b)\rangle)=\zeta_j{}^\frown\langle b\rangle$ and $\pi_{\upsilon'{}^\frown\langle (3,j,b)\rangle}=\pi_\upsilon\cup\{(|f(\upsilon)|,u^{\upsilon'}_j),(|f(\upsilon)|+1,|\upsilon'|)\}$, and $K^X_{\upsilon'{}^\frown\langle (3,j,b)\rangle}$ will hold exactly when $K^X_{\zeta_j{}^\frown\langle b\rangle}$ does.  For $i\leq|\zeta_j|$ we have $d_{\upsilon'{}^\frown\langle (3,j,b)\rangle}(\pi_{\upsilon'{}^\frown\langle (3,j,b)\rangle}(i))=d_{\zeta_j^\frown\langle b\rangle}(i)$; otherwise $d_{\upsilon'{}^\frown\langle (3,j,b)\rangle}(i)=0$ if this is consistent with transitivity, and as required by transitivity otherwise.  These children are not part of the subtree: $|f(\upsilon'{}^\frown\langle (3,j,b)\rangle)|$ is even, so these nodes discard the entire subtree and start a new one.

We must verify that $\Theta^X_\upsilon(c)$ implies the existence of the desired extension of $(p,q,X)$.  Consider some node $\upsilon'$, which belongs to one of our subtrees---$\upsilon'$ extends (perhaps non-properly) a node $\upsilon$ with $|f(\upsilon)|$ even.  

Consider each of the children of $f(\upsilon)$, the nodes $\zeta_j$; when $K^X_{(r,s),\zeta_j}(c,(b_0,\ldots,b_{|\zeta_j|-1}),\vec a_0,\ldots,\vec a_{|\zeta_j|-1})$ holds, we have $b_{|\zeta_j|-1}=(d_{|\zeta_j|-1},e_{|\zeta_j|-1},p_{|\zeta_j|-1},f_{|\zeta_j|-1},q_{|\zeta_j|-1})$ where $p_{|\zeta_j|-1}\sqsupset p_{v_{|\sigma_j|-1}}$ (or $p_{|\zeta_j|-1}\sqsupset p$ if $|\sigma_j|=1$) and $q_{|\zeta_j|-1}\sqsupset q_{w_{|\tau_j|-1}}$ (or $q_{|\zeta_j|-1}\sqsupset q$ if $|\tau_j|=1$) where $\sigma_j,\tau_j$ depend on the node $\zeta_j$.  In particular, we are interested in the segments which these sequences would extend: let $p'_j=p_{\pi_\upsilon(v_{|\sigma_j|}-1)}$ if $|\sigma_j|>1$ and $p'_j=p$ if $|\sigma_j|=1$; similarly, let $q'_j=q_{\pi_\upsilon(w_{|\tau_j|})-1}$ if $|\tau_j|>1$ and $q'_j=q$ if $|\tau_j|=1$.

Each $\zeta_j$ of $f(\upsilon)$ is looking for extensions consisting of points $z$ satisfying conditions like $c(x,z)\in\{0,1\}$ for all $x\in p'_j$ and $c(y,z)\in\{0,2\}$ for all $y\in p'_j$.  These conditions are exhaustive---every point $z$ satisfies these conditions for some $\zeta_j$---so we may choose a $j$ and an $X'\subseteq X$ so that $X'$ is infinite and every $z\in X'$ can be used in an extension witnessing $\zeta_j$.  We may also fix the set $S^-=\bigcup_{i<r^{\upsilon'}_j}S_{v^{\upsilon'}_{j,i}}\cup\bigcup_{i<s^{\upsilon'}_j}S_{w^{\upsilon'}_{j,i}}$.  There is some $\rho_{S^-}$ such that there are infinitely many $z\in X'$ with $\rho^z_{S^-}=\rho_{S^-}$.  Let $X''\subseteq X$ be the set of such $z$.

Suppose $t^{\upsilon'}_j=1$.   Then either $(p,q'_j,X'')$ or $(p'_j,q,X'')$ is the needed condition---if we could find extensions to both $q'_j$ and $p'_j$ in $X''$ then we would have a witness to $\upsilon'{}^\frown\langle (1,j,b)\rangle$ for some $b$ (where $b$ depends on $r_{S^-}$).

So suppose $t_j^{\upsilon'}=0$.  Consider the ``guards'' $p_{u_j^{\upsilon'}},q_{u_j^{\upsilon'}}$ (which extend $p'_j,q'_j$).  For any (equivalently, every) $x\in p_{u_j^{\upsilon'}}\cup q_{u_j^{\upsilon'}}$, let $\rho^0_{S^-}=\rho^x_{S^-}$.  We now consider some cases.
\begin{itemize}
\item Suppose $\rho^0_{S^-}\not\leq\rho_{S^-}$.
  \begin{itemize}
  \item If there is an $a\in S^-$ with $\rho^0_{S^-}(a)=1$ but $\rho_{S^-}(a)=2$ then $(p_{u_j^{\upsilon'}}, q,X'')$ witnesses $\Theta^{X\oplus p_{u_j^{\upsilon'}}}_{R^+;\sigma}(c)$ (where $\sigma$ is the length of the node in $T^+$ corresponding to $\zeta_j$) since any witness to an extension of $\sigma$ would be an extension of $p_{u_j^{\upsilon'}}$ in $X''$, and would therefore witness $\Delta^X_{R;\upsilon'{}^\frown\langle (3,j,1)\rangle}$.
  \item Otherwise there is an $a\in S^-$ with $\rho^0_{S^-}(a)=2$ but $\rho_{S^-}(a)=1$, and $(p,q_{u_j^{\upsilon'}},X'')$ witnesses $\Theta^{X\oplus q_{u_j^{\upsilon'}}}_{R^-;\tau}(c)$ (where $\tau$ is the length of the node in $T^-$ corresponding to $\zeta_j$) since any witness to an extension of $\tau$ would be an extension of $q_{u_j^{\upsilon'}}$ in $X''$, and would therefore witness $\Delta^X_{R;\upsilon'{}^\frown\langle (3,j,2)\rangle}$.
  \end{itemize}
\item Otherwise $\rho^0_{S^-}\leq\rho_{S^-}$.
  \begin{itemize}
  \item Suppose $s^{\upsilon'}_j>0$ but $F_{u_j^{\upsilon'}}(\rho_{S^-})$ is $01$ on $c$.  Then either $(p'_j,q,X'')$ or $(p,q'_j,X'')$ is the needed condition (witnessing $\Theta^{X\oplus p'_j}_{R^+;\sigma}(c)$ or $\Theta^{X\oplus q'_j}_{R^-;\tau}(c)$ respectively)---if we could find extensions to both $p'_j$ and $q'_j$ in $X''$ then we would have a witness to $\Delta^X_{R;\upsilon'{}^\frown\langle (2,j)\rangle}(c)$)
  \item Otherwise $s^{\upsilon'}_j=0$ or $F_{u_j^{\upsilon'}}(r'_{S^-})$ is $02$ on $c$.  Then for every finite set $S>S^-$ with $S\subseteq X''$, there cannot be a function $F$ witnessing $\Delta^X_{R;\upsilon'{}^\frown\langle (0,j)\rangle}(c)$, so there must be some prediction $\rho_S$ such that there is neither a $p^*$ nor a $q^*$ with the needed properties.  In particular, we may divide $S$: let $S_1=\{z\in S\mid \rho_S(z)\in\{0,1\}\}$ and $S_2=\{z\in S\mid \rho_S(z)\in\{0,2\}\}$.  Then there is no $p^*\subseteq S_1$ nor $q^*\subseteq S_2$ with the desired properties.  We now use the technique of Lemma 4.22 of \cite{LST:MR3125903}.  Consider the tree of such partitions.  Since $\mathcal{I}$ satisfies $\mathbf{WKL}$, there is such a partition $X''=X_1\cup X_2$.  One of the pieces $X_1$ or $X_2$ must be infinite, so either $(p_{r_j^{\upsilon'}},q,X_1)$ or $(p,q_{s_j^{\upsilon'}},X_2)$ is the desired condition.
  \end{itemize}
\end{itemize}
\end{proof}

Combining these as before, we have:
\begin{theorem}
  There is a Turing ideal satisfying $\mathbf{ProdWQO}$ and $\mathbf{WKL}$ but not $\mathbf{SCAC}$.
\end{theorem}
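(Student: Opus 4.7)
The plan is to assemble the four standard ingredients already available in this section exactly as in the proof of Theorem \ref{thm:ads_sts}. Let $c$ (equivalently, the stable partial ordering $\preceq$ with $a\prec b$ iff $a<b$ and $c(a,b)=1$) be the computable stable coloring produced by the existence lemma for \SCAC{}-requirements with $J=\{1\}$, $I=\{0,1\}$. This $c$ satisfies every \SCAC{} in $\emptyset$. The desired ideal $\mathcal{I}$ will be built as the union of Turing degrees $\mathrm{deg}(\oplus_{i\leq n}I_i)$ for a recursively constructed sequence $I_0,I_1,\ldots$, so that at every stage the oracle $X_n=\oplus_{i\leq n}I_i$ still satisfies every \SCAC{} in $X_n$.

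First I would fix an enumeration of tasks: the countably many pairs $(e,\mathrm{tree})$ where $U_e$ is a $\{0,1\}$-branching tree (instances of \WKL{}), together with the countably many pairs $(e,\mathrm{color})$ where $\Phi_e^{X_n}$ would code a \ProdWQO{}-instance, i.e.\ a coloring $c^*\colon[\mathbb{N}]^2\to\{0,1,2\}$ whose colors $1$ and $2$ are transitive. At stage $n$, depending on the task: if it is an instance of \WKL{} that is infinite relative to $X_n$, I would invoke the relativized form of Lemma \ref{thm:ads_sts_wkl_gen} (with $J=\{1\}$, $I=\{0,1\}$) to produce a branch $\Lambda$ such that $c$ still satisfies every \SCAC{} in $X_n\oplus\Lambda$, and set $I_{n+1}=\Lambda$. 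If the task is a \ProdWQO{}-instance, I would invoke the final lemma of the section, applied to the countable Turing ideal generated by $X_n$ (which satisfies \WKL{} because of the \WKL{} steps interleaved in the construction), to obtain an infinite set $S$ omitting one of the two transitive colors and preserving every \SCAC{} in $X_n\oplus S$; set $I_{n+1}=S$. The preservation clause in each lemma is exactly what lets the inductive invariant "every \SCAC{} in $X_n$ is satisfied by $c$" propagate to $X_{n+1}$.

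Taking $\mathcal{I}=\{Y:Y\leq_T X_n\text{ for some }n\}$ yields a Turing ideal that satisfies \WKL{} and \ProdWQO{} by construction. To see that $\mathcal{I}$ fails \SCAC{}, apply the specialization of Lemma \ref{thm:ads_sts_diat} to $J=\{1\}$, $I=\{0,1\}$: any infinite $B\in\mathcal{I}$ is $X_n$-computably enumerable for some $n$, and the preserved invariant forces $B$ to contain both a $\prec$-comparable pair and a $\prec$-incomparable pair (in the version stated just after the existence lemma above). Hence $B$ is neither a chain nor an antichain for $\preceq$, and since $\preceq$ is stable, $\mathcal{I}$ contains no solution to this stable instance of \SCAC{}. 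This gives the theorem.

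The only nontrivial step conceptually is checking that the countable Turing ideal hypothesis of the \ProdWQO{}-preservation lemma is legitimately available at each stage; this is immediate because the ideal generated by $X_n$ is countable and, by interleaving the \WKL{} tasks densely among the \ProdWQO{} tasks, already satisfies \WKL{}. Everything else is pure bookkeeping, already templated by the proof of Theorem \ref{thm:ads_sts}. The main technical obstacle—the combinatorial heart of the argument—lies entirely in the preservation lemma just proved, not in this assembly step.
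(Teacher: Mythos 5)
Your assembly is the intended one (the paper itself only says ``combining these as before''), and most of it is sound: the choice of $c$ via Lemma \ref{thm:ads_sts_exists_gen} with $I=\{0,1\}$, $J=\{1\}$, the use of Lemma \ref{thm:ads_sts_wkl_gen} for tree tasks, and the diagonalization via the specialization of Lemma \ref{thm:ads_sts_diat} (every infinite $B$ in the ideal contains a $\prec$-comparable pair and a $\prec$-incomparable pair, so the stable instance $\preceq$ has no chain or antichain solution). But one step, as written, is false: you invoke the $\mathbf{ProdWQO}$-preservation lemma ``applied to the countable Turing ideal generated by $X_n$,'' asserting that this ideal satisfies $\mathbf{WKL}$ because the $\mathbf{WKL}$ tasks are interleaved. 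No principal Turing ideal $\{Y: Y\leq_T X_n\}$ satisfies $\mathbf{WKL}$: the tree of $\{0,1\}$-valued functions diagonally noncomputable relative to $X_n$ (equivalently, of completions of $\mathrm{PA}$ relativized to $X_n$) is an infinite $X_n$-computable $\{0,1\}$-branching tree with no path computable from $X_n$. Interleaving only makes the \emph{limit} ideal a $\mathbf{WKL}$-model; at the finite stage where you want to solve a $\mathbf{ProdWQO}$ instance, the hypothesis of the lemma is simply not met --- and it is used essentially in its proof, to choose a member of the ambient ideal that is a path through the tree of bad partitions.

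The repair is a reorganization of the bookkeeping rather than a new idea, but it must be done. Build an increasing chain of \emph{countable Turing ideals} $\mathcal{J}_0\subseteq\mathcal{J}_1\subseteq\cdots$, each already closed under $\mathbf{WKL}$ and each satisfying the invariant that $c$ satisfies every requirement with range $\{0,1\}$ transitive in color $1$ in every $X\in\mathcal{J}_k$ (it suffices to maintain this for the finite joins of the sets adjoined so far, since a requirement in $Y\leq_T X$ is also a requirement in $X$). Obtain $\mathcal{J}_0$ by iterating Lemma \ref{thm:ads_sts_wkl_gen} $\omega$ many times starting from $\emptyset$. At step $k$, a bookkeeping over all pairs hands you one $\mathbf{ProdWQO}$ instance lying in $\mathcal{J}_k$; apply the final lemma of the section to the ideal $\mathcal{J}_k$ itself (which legitimately satisfies $\mathbf{WKL}$) to get $S$. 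Its conclusion --- that $\preceq$ satisfies every such requirement in $X\oplus S$ for \emph{every} $X\in\mathcal{J}_k$ --- is exactly what allows you to re-close under $\mathbf{WKL}$, by another $\omega$-iteration of Lemma \ref{thm:ads_sts_wkl_gen} over oracles of the form $X\oplus S\oplus\Lambda_1\oplus\cdots\oplus\Lambda_r$, yielding $\mathcal{J}_{k+1}$ with the invariant intact. The union $\mathcal{I}=\bigcup_k\mathcal{J}_k$ then satisfies $\mathbf{WKL}$ (every tree in $\mathcal{I}$ lies in some $\mathcal{J}_k$) and $\mathbf{ProdWQO}$ (every instance is eventually treated), and your diagonalization argument applies verbatim to show $\mathcal{I}$ fails $\mathbf{SCAC}$.
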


\section{A Question}

The original goal of this project was simply to separate $\mathbf{ProdWQO}$ from $\mathbf{SCAC}$; incorporating $\mathbf{WKL}$ (and therefore simultaneously separating $\mathbf{ADS}+\mathbf{WKL}$ from $\mathbf{SCAC}$) seemed to be forced on the project by the nature of the arguments needed.

\begin{question}
  Is it possible to separate $\mathbf{ProdWQO}$ from $\mathbf{SCAC}$ without separating $\mathbf{ProdWQO}+\mathbf{WKL}$ from $\mathbf{SCAC}$?
\end{question}

In particular, it would be interesting to identify a way to make precise the claim that the separation of $\mathbf{ProdWQO}$ from $\mathbf{SCAC}$ somehow requires dealing with $\mathbf{WKL}$.

\printbibliography
\end{document}